\renewcommand{\tilde}{\widetilde}
\def\1{{1\mkern-7mu1}}
\newcommand\bquote{\begin{quote}}
\newcommand\equote{\end{quote}}
\newcommand\bcomment{}
\newcommand\bsmall{\begin{small}}
\newcommand\esmall{\end{small}}
\newcommand\bfootnotesize{\begin{footnotesize}}
\newcommand\efootnotesize{\end{footnotesize}}
\newcommand{\textnf}{\textnormal}
\let\cite=\citealt
\newcommand{\eb}[1]{{\itshape\bfseries#1}{\index{#1}}}
\renewcommand{\emph}{\eb}
\renewcommand{\Gamma}{\varGamma}
\renewcommand{\Pi}{\varPi}
\renewcommand{\Sigma}{\varSigma}
\DeclareMathOperator{\Aut}{Aut}
\DeclareMathOperator{\End}{End}
\DeclareMathOperator{\Gal}{Gal}
\DeclareMathOperator{\GL}{GL}
\DeclareMathOperator{\Hom}{Hom}
\DeclareMathOperator{\Ker}{Ker}
\DeclareMathOperator{\MT}{MT}
\DeclareMathOperator{\ord}{ord}
\DeclareMathOperator{\Pic}{\mathrm{Pic}}
\DeclareMathOperator{\Tr}{Tr}
\newcommand{\Mot}{\mathsf{Mot}}
\newtheorem{X}{X}[section]
\newtheorem{plain}[X]{}
\newtheorem*{conjecture}[X]{Conjecture}
\newtheorem*{conjectureT}[X]{Conjecture $T^r(X,\ell)$ (Tate)}
\newtheorem*{conjectureE}[X]{Conjecture $E^r(X,\ell)$ (Equality of equivalence relations)}
\newtheorem*{conjectureS}[X]{Conjecture $S^r(X,\ell)$ (Partial semisimplicity)}
\newtheorem{corollary}[X]{Corollary}
\newtheorem{lemma}[X]{Lemma}
\newtheorem{proposition}[X]{Proposition}
\newtheorem{theorem}[X]{Theorem}
\newtheorem{definition}[X]{Definition}
\newtheorem{question}[X]{Question}
\newtheorem{remark}[X]{Remark}
\newtheorem{aside}[X]{Aside}
\theoremstyle{nonumberplain}
\newtheorem{proof}{Proof}
\newtheorem{pf}{Sketch of proof}
\begin{document}

\title{{\huge The Tate conjecture over finite fields (AIM talk)}}
\author{{\Large J.S. Milne}}
\date{}
\maketitle

\begin{abstract}
These are my notes for a talk at the The Tate Conjecture workshop at the
American Institute of Mathematics in Palo Alto, CA, July 23--July 27, 2007,
somewhat revised and expanded. The intent of the talk was to review what is
known and to suggest directions for research.\newline v1 (September 19, 2007):
first version on the web. \newline v2 (\today): revised and expanded.

\end{abstract}

\tableofcontents

\subsubsection{Conventions}

\renewcommand{\thefootnote}{\fnsymbol{footnote}} \footnotetext{MSC2000: 14C25,
11G25} \renewcommand{\thefootnote}{\arabic{footnote}}

All varieties are smooth and projective. Complex conjugation on $\mathbb{C}{}$
is denoted by $\iota$. The symbol $\mathbb{F}{}$ denotes an algebraic closure
of $\mathbb{F}{}_{p}$, and $\ell$ always denotes a prime $\neq p$. On the
other hand, $l$ is allowed to equal $p$. For a variety $X$, $H^{\ast
}(X)=\bigoplus\nolimits_{i}H^{i}(X)$ and $H^{2\ast}(X)(\ast)=\bigoplus
\nolimits_{i}H^{2i}(X)(i)$; both are graded algebras. I denote a canonical (or
a specifically given) isomorphism by $\simeq$. I assume that the reader is
familiar with the basic theory of abelian varieties as, for example, in
\cite{milne1986ab}.

\section{The conjecture, and some folklore}

Let $X$ be a variety over $\mathbb{F}{}$. A model $X_{1}$ of $X$ over a finite
subfield $k_{1}$ of $\mathbb{F}{}$ gives rise to a commutative diagram:%
\[
\begin{CD}
Z^r(X) @>{c^{r}}>> H^{2r}(X,\mathbb{Q}_{\ell}(r))\\
@AAA@AAA\\
Z^r(X_{1}) @>{c^{r}}>>H^{2r}(X_{1},\mathbb{Q}_{\ell}(r)).
\end{CD}
\]
Here $Z^{r}(\ast)$ denotes the group of algebraic cycles of codimension $r$ on
a variety (free $\mathbb{Z}{}$-module generated by the closed irreducible
subvarieties of codimension $r$) and $c^{r}$ is the cycle map. The image of
the vertical arrow at right is contained in $H^{2r}(X,\mathbb{Q}{}_{\ell
}(r))^{\Gal(\mathbb{F}{}/k_{1})}$ and $Z^{r}(X)=\varinjlim_{X_{1}/k_{1}}%
Z^{r}(X_{1})$, and so the image of the top cycle map is contained in
\[
H^{2r}(X,\mathbb{Q}{}_{\ell}(r))^{\prime}\,\,\overset{\text{{\tiny def}}}%
{=}\,\,\bigcup\nolimits_{X_{1}/k_{1}}H^{2r}(X,\mathbb{Q}{}_{\ell
}(r))^{\Gal(\mathbb{F}{}/k_{1})}.
\]
In his talk at the AMS Summer Institute at Woods Hole in July, 1964, Tate
conjectured the following:\footnote{Tate's talk is included in the
mimeographed proceedings of the conference, which were distributed by the AMS
to only a select few. Despite their great historical importance --- for
example, they contain the only written account by Artin and Verdier of their
duality theorem, and the only written account by Serre and Tate of their
lifting theorem --- the AMS has ignored requests to make the proceedings more
widely available. Fortunately, Tate's talk was reprinted in the proceedings of
an earlier conference (Arithmetical algebraic geometry. Proceedings of a
Conference held at Purdue University, December 5--7, 1963. Edited by O. F. G.
Schilling, Harper \& Row, Publishers, New York 1965).}

\begin{conjectureT}
The $\mathbb{Q}{}_{\ell}$-vector space $H^{2r}(X,\mathbb{Q}{}_{\ell
}(r))^{\prime}$ is spanned by algebraic classes.
\end{conjectureT}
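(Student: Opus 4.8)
\begin{pf}
The statement above is the Tate conjecture itself, open in general; at the present state of knowledge one can only hope to reduce it, step by step, to the cases that can be handled directly, and I sketch that reduction now, indicating where it stalls.

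The plan is to treat first the case $r=1$ and then to bootstrap to arbitrary $r$ by the formalism of motives. For divisors, the Kummer sequence for $\mu_{\ell^{n}}$ on a model $X_{1}/k_{1}$ of $X$ gives, in the limit over $n$, an exact sequence
\[
0\longrightarrow\Pic(X_{1})\otimes\mathbb{Z}_{\ell}\longrightarrow H^{2}(X_{1},\mathbb{Z}_{\ell}(1))\longrightarrow T_{\ell}\Br(X_{1})\longrightarrow 0,
\]
so that, after tensoring with $\mathbb{Q}_{\ell}$ and taking the union over finite $k_{1}\subset\mathbb{F}$, the assertion $T^{1}(X,\ell)$ becomes equivalent (Tate) to the vanishing of $T_{\ell}\Br(X_{1})\otimes\mathbb{Q}_{\ell}$ for every model; and by repeated hyperplane sections together with weak Lefschetz one may assume in addition that $\dim X=2$. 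Now $T^{1}$ is known for abelian varieties --- for $A_{1}\times B_{1}$ it is exactly Tate's theorem on homomorphisms,
\[
\Hom_{k_{1}}(A_{1},B_{1})\otimes\mathbb{Z}_{\ell}\;\simeq\;\Hom_{\Gal(\mathbb{F}/k_{1})}(T_{\ell}A,T_{\ell}B),
\]
whose proof combines the semisimplicity of the Galois action on $V_{\ell}A$ with the finiteness of the set of polarized abelian varieties of bounded degree in a fixed isogeny class and Zarhin's trick --- and, via Kuga--Satake-type constructions that reduce matters to the abelian case, for $K3$ surfaces and for certain Shimura varieties; but for a general surface, say one of large degree in $\mathbb{P}^{3}$, it is open.

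For $r\ge 2$ the plan is to grant the standard conjectures over $\mathbb{F}$, so that $\ell$-adic homological and numerical equivalence coincide (Conjecture $E^{r}(X,\ell)$) and Frobenius acts semisimply (Conjecture $S^{r}(X,\ell)$); by Jannsen's theorem the category of numerical motives over $\mathbb{F}$ is then semisimple, and with the $\ell$-adic realization as fibre functor it is Tannakian, so that the classes in $H^{2r}(X,\mathbb{Q}_{\ell}(r))'$ are precisely the invariants of the Frobenius tori acting on the motive $h^{2r}(X)(r)$. Granting in addition that every motive over $\mathbb{F}$ is, up to isogeny, a direct factor of a Tate twist of $h^{\ast}(A)$ for some abelian variety $A$, the problem reduces to $T^{1}$ for abelian varieties and their products, i.e.\ to Tate's theorem above. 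The hard part --- indeed essentially the whole difficulty --- is that this chain rests on inputs that are themselves open: the Hodge-standard conjecture in characteristic $p$, the generation of the category of motives over $\mathbb{F}$ by those of abelian varieties, and, most fundamentally, a general mechanism for \emph{producing} algebraic cycles of codimension $\ge 2$. There is no analogue of the Kummer sequence in higher codimension, and essentially every case proved so far reduces to the abelian case --- to Tate's 1966 theorem, directly or through a Kuga--Satake-type construction. My honest expectation, therefore, is that the reductions above are routine modern formalism, while the conjecture as stated --- uniform in $X$, $r$ and $\ell$ --- lies beyond present methods: the genuinely missing ingredient is the construction of the cycles themselves.
\end{pf}
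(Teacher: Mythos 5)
You have not proved the statement, and rightly so: what you were given is Conjecture $T^{r}(X,\ell)$ itself, which the paper states as an open conjecture and never claims to prove. The paper is a survey; it establishes the conjecture only in special cases (Tate's theorem \ref{t7} for divisors on abelian varieties and its corollaries \ref{t8}, \ref{t9}, the no-exotic-classes criterion \ref{t10}, the elliptic $K3$ case \ref{t13}) and otherwise proves reductions and equivalences (\ref{t2}, \ref{t15}, \ref{t29}, \ref{t44}). So your refusal to offer a purported proof, and your decision to survey the known reductions instead, is the only defensible reading of the task; there is no hidden argument in the paper that you missed.

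That said, a few points of your sketch diverge from, or overstate, what the paper supports. First, your claim that $T^{1}$ is known for $K3$ surfaces via Kuga--Satake is stronger than the paper's position: the paper credits Artin and Swinnerton-Dyer only with the case of $K3$ surfaces admitting a pencil of elliptic curves, and explicitly remarks (\ref{t14}) that the Tate conjecture is still unknown for all $K3$ surfaces over $\mathbb{F}$. Second, Zarhin's trick is not part of Tate's 1966 argument as the paper sketches it; the paper's sketch of \ref{t7} uses the polarization, the finiteness of polarized abelian varieties of bounded degree, and semisimplicity of Frobenius, while Zarhin's contribution is the later extension to finitely generated fields (\ref{t9b}). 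Third, your route for $r\ge 2$ (grant the standard conjectures, then grant that all motives over $\mathbb{F}$ come from abelian varieties) is not the paper's strategy: the paper instead exploits the Tannakian property of algebraic classes (\ref{t15}, \ref{t16}), the partial unconditional result on $E(A^{n},\ell)$ for infinitely many $\ell$ (\ref{t19}), the reduction of the Tate conjecture for abelian varieties over $\mathbb{F}$ to codimension $2$ (\ref{t29}), its deduction from the Hodge conjecture for CM abelian varieties (\ref{t44}), and the proposed weaker substitute of a good theory of rational Tate classes (\ref{t41}--\ref{t36c}). These reductions buy something your sketch does not: they isolate concrete, potentially more tractable statements (codimension-$2$ classes on CM abelian varieties, the rationality conjecture) rather than resting on the full standard conjectures. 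Your closing diagnosis --- that the missing ingredient is a mechanism for producing cycles in codimension $\ge 2$ --- is consistent with the paper's own assessment in \ref{t14} and \ref{t23}.
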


\noindent The conjecture implies that, for any model $X_{1}/k_{1}$, the
$\mathbb{Q}{}_{\ell}$-subspace $H^{2r}(X,\mathbb{Q}{}_{\ell}%
(r))^{\Gal(\mathbb{F}{}/k_{1})}$ is spanned by the classes of algebraic cycles
on $X_{1}$; conversely, if this is true for all models $X_{1}/k_{1}$ over
(sufficiently large) finite fields $k_{1}$, then $T^{r}(X,\ell)$ is true.

I write $T^{r}(X)$ (resp. $T(X,\ell)$, resp. $T(X)$) for the conjecture that
$T^{r}(X,\ell)$ is true for all $\ell$ (resp. all $r$, resp. all $r$ and
$\ell$).

In the same talk, Tate mentioned the following \textquotedblleft conjectural
statement\textquotedblright:

\begin{conjectureE}
The kernel of the cycle class map $c^{r}\colon Z^{r}(X)\rightarrow
H^{2r}(X_{\mathbb{F}{}},\mathbb{Q}{}_{\ell}(r))$ consists exactly of the
cycles numerically equivalent to zero.
\end{conjectureE}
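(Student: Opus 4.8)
The plan is to split the asserted equality of subgroups of $Z^r(X)$ into its two inclusions, only one of which is hard. Put $d=\dim X$. If $c^r(z)=0$, then for every $w\in Z^{d-r}(X)$ the intersection number $\langle z,w\rangle$ is the image of $c^r(z)\cup c^{d-r}(w)\in H^{2d}(X_{\mathbb F},\mathbb Q_\ell(d))$ under the trace isomorphism $H^{2d}(X_{\mathbb F},\mathbb Q_\ell(d))\simeq\mathbb Q_\ell$, because the $\ell$-adic cycle map is compatible with intersection products and with proper pushforward to a point; hence every $\langle z,w\rangle$ vanishes and $z$ is numerically equivalent to zero. So $\Ker c^r$ is always contained in the subgroup of numerically trivial cycles, and the entire content of the statement is the reverse inclusion: a numerically trivial cycle is $\ell$-adically homologically trivial.

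For that reverse inclusion I would descend to a finite model. Since $Z^r(X)=\varinjlim_{X_1/k_1}Z^r(X_1)$ and both equivalence relations can already be seen over a finite subfield, it suffices to show, for each model $X_1/k_1$, that the intersection pairing
\[
A^r(X_1)\times A^{d-r}(X_1)\longrightarrow\mathbb Q
\]
is nondegenerate, where $A^s(X_1)$ denotes $Z^s(X_1)$ modulo $\ell$-adic homological equivalence; this is merely a restatement, because by the first paragraph numerical equivalence in codimension $r$ is homological equivalence modulo the radical of that pairing. Fix $X_1/k_1$, and write $q=\#k_1$, $\Gamma_1=\Gal(\mathbb F/k_1)$, $V=H^{2r}(X,\mathbb Q_\ell(r))$. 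Poincar\'e duality identifies $H^{2(d-r)}(X,\mathbb Q_\ell(d-r))$ with the $\Gamma_1$-module $V^\vee$ and the cup product with the tautological pairing $V\times V^\vee\to\mathbb Q_\ell$ (with trivial $\Gamma_1$-action on the target), and the cycle maps give injections $A^r(X_1)\otimes\mathbb Q_\ell\hookrightarrow V^{\Gamma_1}$ and $A^{d-r}(X_1)\otimes\mathbb Q_\ell\hookrightarrow(V^\vee)^{\Gamma_1}$ compatible with these pairings.

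Now the two conjectures enter. Invoking $T^r(X,\ell)$ and $T^{d-r}(X,\ell)$ turns both injections into isomorphisms, so nondegeneracy becomes the question whether $V^{\Gamma_1}\times(V^\vee)^{\Gamma_1}\to\mathbb Q_\ell$ is perfect. Since $\Gamma_1$ is topologically generated by the Frobenius $F$, one has $(V^\vee)^{\Gamma_1}=(V_{\Gamma_1})^\vee$, so an element $\alpha\in V^{\Gamma_1}=\Ker(F-1)$ lies in the radical precisely when $\alpha\in\im(F-1)$; hence the pairing is perfect if and only if $\Ker(F-1)\cap\im(F-1)=0$, that is, if and only if $F$ acts semisimply on its generalized eigenspace for the eigenvalue $1$ --- equivalently, semisimply on the generalized $q^r$-eigenspace of $F$ on $H^{2r}(X,\mathbb Q_\ell)$. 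That is exactly Conjecture $S^r(X,\ell)$, and carrying the argument out for every model $X_1/k_1$ yields $E^r(X,\ell)$.

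The main obstacle is precisely this last input, and it is not a technicality: what the argument delivers is the implication $T^r(X,\ell)+T^{d-r}(X,\ell)+S^r(X,\ell)\Rightarrow E^r(X,\ell)$, and I see no route to $E^r$ that avoids both the Tate conjecture and the semisimplicity statement, neither of which is known in general. Semisimplicity of Frobenius is classical on $H^1$ of abelian varieties (Weil), hence on the exterior powers $H^s\simeq\bigwedge^s H^1$, and holds in a handful of other cases, so $E^r$ can be obtained unconditionally only in the range where $T$ and $S$ are simultaneously available; enlarging that range is the real difficulty. The remaining ingredients --- compatibility of cycle maps, the trace map, Poincar\'e duality, and the Galois action, together with the fact that cycle classes independent modulo homological equivalence stay independent in $\mathbb Q_\ell$-cohomology --- are standard and need only be assembled with care.
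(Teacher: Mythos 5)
The statement you were asked about is Conjecture $E^{r}(X,\ell)$: it is an open conjecture which the paper states but does not prove, so there is no proof of it in the paper to compare against. Your proposal handles this honestly and essentially correctly: the inclusion $\Ker c^{r}\subset\{\text{numerically trivial cycles}\}$ via compatibility of the cycle map with intersection products and the trace map is right and is the only unconditional content available in general; the reverse inclusion is the real conjecture, and the conditional implication you extract, $T^{r}(X,\ell)+T^{d-r}(X,\ell)+S^{r}(X,\ell)\Rightarrow E^{r}(X,\ell)$, is exactly part of the folklore equivalences recorded as Theorem \ref{t2} (whose proof the paper delegates to \cite{tate1994}, \S 2). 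One technical wrinkle: you assert that cycle classes independent modulo \emph{homological} equivalence stay $\mathbb{Q}_{\ell}$-independent and call this standard; it is not --- a $\mathbb{Q}$-independent set in a $\mathbb{Q}_{\ell}$-vector space can become dependent. What is standard is the analogous statement modulo \emph{numerical} equivalence (pair against complementary cycles to convert a $\mathbb{Q}_{\ell}$-relation into a $\mathbb{Q}$-relation among intersection numbers), and Tate's argument is set up modulo numerical equivalence for precisely this reason; your reduction goes through once you make that substitution.

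Your closing claim that there is no route to $E^{r}$ avoiding both the Tate conjecture and semisimplicity is, however, too strong, and this is where the paper has genuinely more to say. Theorem \ref{t18} (an abstract form of Clozel's theorem) and Corollary \ref{t19} prove, unconditionally, that for any abelian variety $A$ over $\mathbb{F}$ there is an infinite set of primes $\ell$ with $E(A^{n},\ell)$ true for all $n$: the method uses the CM structure of $\End^{0}(A)$ and a Weil cohomology with coefficients admitting an involution compatible with complex conjugation, and makes no use of $T$ or $S$. Together with the fact that $E^{1}(X)$ holds for all $X$ (\cite{tate1994}, \S 5), this shows that partial unconditional progress on $E$ is possible by routes quite different from the $T+S\Rightarrow E$ folklore you rediscovered, and indeed the paper exploits Corollary \ref{t19} as an input elsewhere (e.g.\ in the proof of Theorem \ref{t29}).
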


\noindent Both conjectures are existence statements for algebraic classes. It
is well known that Conjecture $E^{1}(X)$ holds for all $X$ (see
\cite{tate1994}, \S 5).

Let $X$ be a variety over $\mathbb{F}{}$. The choice of a model of $X$ over a
finite subfield of $\mathbb{F}{}$ defines a Frobenius map $\pi\colon
X\rightarrow X$. For example, for a model $X_{1}\subset\mathbb{P}{}^{n}$ over
$\mathbb{F}{}_{q}$, $\pi$ acts as%
\[
(a_{1}\colon a_{2}\colon\ldots)\mapsto(a_{1}^{q}\colon a_{2}^{q}\colon
\ldots)\colon X_{1}(\mathbb{F}{})\rightarrow X_{1}(\mathbb{F}{}),\quad
X_{1}(\mathbb{F}{})\simeq X(\mathbb{F}{}).
\]
Any such map will be called a \emph{Frobenius map} of $X$ (or a $q$%
-\emph{Frobenius map} if it is defined by a model over $\mathbb{F}{}_{q}$). If
$\pi_{1}$ and $\pi_{2}$ are $p^{n_{1}}$- and $p^{n_{2}}$-Frobenius maps of
$X$, then $\pi_{1}^{n_{2}N}=\pi_{2}^{n_{1}N}$ for some $N>1$.\footnote{Because
any two models of $X$ become isomorphic over a finite subfield of
$\mathbb{F}{}$; when the model $X_{1}/k_{1}$ is replaced by $X_{1K}/K$, then
its Frobenius map $\pi$ is replaced by $\pi^{\lbrack K\colon k_{1}]}$.} For a
Frobenius map $\pi$ of $X$, we use a subscript $a$ to denote the generalized
eigenspace with eigenvalue $a$, i.e., $\bigcup\nolimits_{N}\Ker\left(
(\pi-a)^{N}\right)  .$

\begin{conjectureS}
Every Frobenius map $\pi$ of $X$ acts semisimply on $H^{2r}(X,\mathbb{Q}%
{}_{\ell}(r))_{1}$ (i.e., it acts as $1$).
\end{conjectureS}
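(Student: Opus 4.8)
Since $S^{r}(X,\ell)$ is one of the standard conjectures, the realistic goal is to \textit{reduce} it: the plan is to show that the Tate conjecture in the complementary codimensions $r$ and $d-r$ (where $d=\dim X$), together with the equality of numerical and homological equivalence in those codimensions, forces every Frobenius map $\pi$ of $X$ to act as $1$ on $H^{2r}(X,\mathbb{Q}_{\ell}(r))_{1}$; the input from cycles will enter only through a single non-degeneracy statement, and after that the argument is linear algebra. First I would fix a model $X_{1}/\mathbb{F}_{q}$, so that $\pi$ is the associated $q$-Frobenius, and put $V=H^{2r}(X,\mathbb{Q}_{\ell}(r))$ and $W=H^{2(d-r)}(X,\mathbb{Q}_{\ell}(d-r))$. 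Poincar\'e duality gives a perfect pairing $\langle\,,\,\rangle\colon V\times W\to\mathbb{Q}_{\ell}$, and since $\pi$ acts as the identity on $H^{2d}(X,\mathbb{Q}_{\ell}(d))\simeq\mathbb{Q}_{\ell}$ it satisfies $\langle\pi u,\pi v\rangle=\langle u,v\rangle$. As the pairing is $\pi$-equivariant with $\pi$ trivial on $\mathbb{Q}_{\ell}$, the generalized $a$-eigenspace in $V$ and the generalized $b$-eigenspace in $W$ pair to zero whenever $ab\neq1$, so $\langle\,,\,\rangle$ restricts to a perfect pairing on the generalized $1$-eigenspaces $V_{1}\times W_{1}$.

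Next I would bring in the algebraic cycles. Write $A^{i}(X)$ for $Z^{i}(X)$ modulo numerical equivalence, tensored with $\mathbb{Q}_{\ell}$; by the definition of numerical equivalence the intersection pairing $A^{r}(X)\times A^{d-r}(X)\to\mathbb{Q}_{\ell}$ is non-degenerate. Assuming $E^{r}$ and $E^{d-r}$, the cycle maps factor through injections $A^{r}(X)\hookrightarrow V$ and $A^{d-r}(X)\hookrightarrow W$, whose images, by $T^{r}$ and $T^{d-r}$, are exactly the Tate-class spaces $H^{2r}(X,\mathbb{Q}_{\ell}(r))^{\prime}$ and $H^{2(d-r)}(X,\mathbb{Q}_{\ell}(d-r))^{\prime}$; and the cycle map carries the intersection pairing onto $\langle\,,\,\rangle$. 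Hence $\langle\,,\,\rangle$ is non-degenerate on $H^{2r}(X,\mathbb{Q}_{\ell}(r))^{\prime}\times H^{2(d-r)}(X,\mathbb{Q}_{\ell}(d-r))^{\prime}$. But each of these spaces is the direct sum of the \textit{honest} $\pi$-eigenspaces for eigenvalues that are roots of unity, and an honest $\zeta$-eigenspace pairs to zero with an honest $\xi$-eigenspace unless $\zeta\xi=1$; so the non-degeneracy descends to each matching block, and in particular $\langle\,,\,\rangle$ is non-degenerate on $\Ker((\pi-1)|_{V})\times\Ker((\pi-1)|_{W})$.

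The crux is then a computation on $V_{1}$. Write $\pi|_{V_{1}}=1+N$ and $\pi|_{W_{1}}=1+M$ with $N,M$ nilpotent, so that $\Ker(N)=\Ker((\pi-1)|_{V})$ and $\Ker(M)=\Ker((\pi-1)|_{W})$. From $\langle\pi u,\pi v\rangle=\langle u,v\rangle$ one gets $\langle Nu,v\rangle=\langle u,(\pi^{-1}-1)v\rangle$ for $u\in V_{1}$, $v\in W_{1}$; and since $(\pi^{-1}-1)|_{W_{1}}=-M(1+M)^{-1}$ has the same kernel and image as $M$, the annihilator of $\im(N)$ under the perfect pairing $V_{1}\times W_{1}$ equals $\Ker(M)$, whence a dimension count identifies the annihilator of $\Ker(M)$ with $\im(N)$. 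Therefore non-degeneracy of $\langle\,,\,\rangle$ on $\Ker(N)\times\Ker(M)$ is equivalent to $\Ker(N)\cap\im(N)=0$, and for a nilpotent operator this forces $N=0$. Thus $\pi$ acts as $1$ on $V_{1}=H^{2r}(X,\mathbb{Q}_{\ell}(r))_{1}$. The same argument run at each root-of-unity eigenvalue (using the corresponding block of the non-degenerate pairing on Tate classes) shows $\pi$ acts semisimply on every such generalized eigenspace, so, since any two Frobenius maps of $X$ share a common power, $S^{r}(X,\ell)$ holds for every Frobenius map.

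The honest obstacle is that this is a conditional proof. Conjecture $S^{r}$ is not known unconditionally; what the argument shows is that it is squeezed out by $T^{r}$, $T^{d-r}$, and the equality of numerical and homological equivalence in codimensions $r$ and $d-r$ --- so the difficulty has been displaced onto those inputs, not removed. An unconditional attack would have to replace the ``$T+E$'' non-degeneracy by the $\ell$-adic analogue of the standard conjecture of Hodge type: using hard Lefschetz (known, by Deligne) one converts $\langle\,,\,\rangle$ into a symmetric $\pi$-invariant form on $V$, and a suitable definiteness of that form on the primitive part would again kill $N$; but that positivity is exactly what is unavailable.
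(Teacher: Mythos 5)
The statement you were asked about is Conjecture $S^{r}(X,\ell)$: it is an open conjecture, the paper neither proves it nor claims a proof (its only unconditional remark is that $S(X)$ holds for abelian varieties, via Weil's theorem that Frobenius acts semisimply on $H^{1}(A,\mathbb{Q}_{\ell})$ and hence on $\bigwedge^{i}H^{1}$). So your text cannot stand as a proof of the statement: what you actually prove, as you yourself say, is the implication that $T^{r}(X,\ell)$, $T^{d-r}(X,\ell)$, $E^{r}(X,\ell)$, $E^{d-r}(X,\ell)$ together force $\pi$ to act as $1$ on $H^{2r}(X,\mathbb{Q}_{\ell}(r))_{1}$. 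That implication is not new to the paper: it is exactly (part of) the folklore equivalence (a)$\iff$(b) of Theorem \ref{t2}, whose proof the paper attributes to Tate 1994, \S 2, and your argument is the standard one from that source (Poincar\'e duality pairs generalized eigenspaces $V_{a}$ and $W_{b}$ trivially unless $ab=1$; the cycle pairing, non-degenerate by definition of numerical equivalence and matched with cup product, gives non-degeneracy on the Tate classes and hence on $\Ker(\pi-1)\times\Ker(\pi-1)$; a non-degenerate pairing on the kernels of the nilpotent parts forces the nilpotent parts to vanish). So the content is correct in outline but duplicates a cited folklore result rather than addressing the conjecture itself.

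Two smaller points if you want to keep the conditional argument as a record of that implication. First, the step ``assuming $E^{r}$ the cycle map factors through an injection $A^{r}(X)\hookrightarrow V$'' needs care after tensoring with $\mathbb{Q}_{\ell}$: a $\mathbb{Q}$-linear injection into a $\mathbb{Q}_{\ell}$-space need not stay injective after $\otimes\,\mathbb{Q}_{\ell}$; here injectivity follows instead from compatibility of the cohomological pairing with the numerical intersection pairing, whose non-degeneracy persists under extension of scalars (this is how Tate argues). Second, your closing sentence about running the argument at every root-of-unity eigenvalue and passing between Frobenius maps is unnecessary for $S^{r}$ as stated, since the conjecture concerns only the generalized eigenspace at $1$ and each Frobenius map comes with its own model, to which the whole argument applies directly.
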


Weil proved that, for an abelian variety $A$ over $\mathbb{F}{}$, the
Frobenius maps act semisimply on $H^{1}(A,\mathbb{Q}{}_{\ell})$. It follows
that they act semisimply on all the cohomology groups $H^{i}(A,\mathbb{Q}%
{}_{\ell})\simeq\bigwedge\nolimits^{i}H^{1}(A,\mathbb{Q}{}_{\ell})$. In
particular, Conjecture $S(X)$ holds when $X$ is an abelian variety over
$\mathbb{F}{}$.

From now on, I'll write $\mathcal{T}{}_{\ell}^{r}(X)$ for $H^{2r}%
(X,\mathbb{Q}{}{}_{\ell}(r))^{\prime}$ and call its elements the \emph{Tate
classes} of degree $r$ on $X$. Note that $\mathcal{T}{}_{\ell}^{\ast
}(X)\overset{\text{{\tiny def}}}{=}\bigoplus\nolimits_{r}\mathcal{T}{}_{\ell
}^{r}(X)$ is a graded $\mathbb{Q}{}_{\ell}$-subalgebra of $H^{2\ast
}(X,\mathbb{Q}{}_{\ell}(\ast))$.

\begin{aside}
\label{t1}One can ask whether the Tate conjecture holds integrally, i.e.,
whether the map%
\begin{equation}
c^{r}\colon Z^{r}(X)\otimes\mathbb{Z}{}_{\ell}\rightarrow H^{2r}%
(X_{\mathbb{F}{}},\mathbb{Z}{}_{\ell}(r))^{\prime} \label{e1}%
\end{equation}
is surjective for all varieties $X$ over $\mathbb{F}{}$. Clearly
$H^{2r}(X_{\mathbb{F}{}},\mathbb{Z}{}_{\ell}(r))^{\prime}$ contains all
torsion classes, and essentially the same argument that shows that not all
torsion classes in Betti cohomology are algebraic, shows that not all torsion
classes in \'{e}tale cohomology are algebraic.\footnote{The proof shows that
the odd dimensional Steenrod operations are zero on the torsion algebraic
classes but not on all torsion cohomology classes.} However, I don't know of
any varieties over $\mathbb{F}{}$ for which the map (\ref{e1}) is not
surjective modulo torsion. It is known that if $T^{r}(X,\ell)$ and
$E^{r}(X,\ell)$ hold for a single $\ell$, then the map (\ref{e1}) is
surjective for all but possibly finitely many $\ell$. See \cite{milneR2004}, \S 3.
\end{aside}

\subsection{Folklore}

\noindent The next three theorems are folklore.

\begin{theorem}
\label{t2}Let $X$ be a variety over $\mathbb{F}{}$ of dimension $d$, and let
$r\in\mathbb{N}{}$. The following statements are equivalent:

\begin{enumerate}
\item $T^{r}(X,\ell)$ and $E^{r}(X,\ell)$ are true for a single $\ell$.

\item $T^{r}(X,\ell)$, $S^{r}(X,\ell)$, and $T^{d-r}(X,\ell)$ are true for a
single $\ell$.

\item $T^{r}(X,\ell)$, $E^{r}(X,\ell)$, $S^{r}(X,\ell)$, $T^{d-r}(X,\ell)$,
and $E^{d-r}(X,\ell)$ are true for all $\ell$, and the $\mathbb{Q}{}$-subspace
$\mathcal{A}_{\ell}^{r}(X)$ of $\mathcal{T}{}_{\ell}^{r}(X)$ generated by the
algebraic classes is a $\mathbb{Q}{}$-structure on $\mathcal{T}_{\ell}^{r}%
(X)$, i.e., $\mathcal{A}{}_{\ell}^{r}(X)\otimes_{\mathbb{Q}{}}\mathbb{Q}%
{}_{\ell}\simeq\mathcal{T}{}_{\ell}^{r}(X)$.

\item the order of the pole of the zeta function $Z(X,t)$ at $t=q^{-r}$ is
equal to the rank of the group of numerical equivalence classes of algebraic
cycles of codimension $r$.
\end{enumerate}
\end{theorem}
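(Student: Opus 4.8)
The natural strategy is to prove a cycle of implications, using the Lefschetz trace formula / zeta-function identity as the bridge between the cohomological statements (a)--(c) and the analytic statement (d). Let me set up notation: fix a model $X_1/k_1$ with $k_1=\mathbb{F}_q$, write $\pi$ for the $q$-Frobenius, and recall the factorization of the zeta function $Z(X_1,t)=\prod_{i=0}^{2d}P_i(t)^{(-1)^{i+1}}$ with $P_i(t)=\det(1-\pi t\mid H^i(X,\mathbb{Q}_\ell))$. The order of the pole of $Z(X,t)$ at $t=q^{-r}$ equals the multiplicity of $q^r$ as a reciprocal root of $P_{2r}(t)$ (the odd-degree $P_i$ contribute zeros, and by the Riemann hypothesis part of the Weil conjectures no other $P_{2i}$ can contribute a root of absolute value $q^r$ of the right weight) — i.e. the multiplicity equals $\dim_{\mathbb{Q}_\ell} H^{2r}(X,\mathbb{Q}_\ell(r))_1$ (generalized eigenspace). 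This is the key computation, and I would carry it out first, reducing everything over $\mathbb{F}$ to a fixed model.

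Next I would establish the chain. For (a) $\Rightarrow$ (b): assuming $T^r$ and $E^r$ for $\ell$, one shows the pairing between algebraic classes in degree $r$ and degree $d-r$ (via the hard Lefschetz / intersection pairing composed with the degree map) is nondegenerate modulo numerical equivalence; combined with $E^r$ this forces the cycle map restricted to the $\pi=1$ generalized eigenspace to be surjective onto the genuine $\pi=1$ eigenspace and forces that generalized eigenspace to be semisimple — this is essentially the standard argument that "enough algebraic classes + no kernel beyond numerical equivalence" implies semisimplicity at the eigenvalue $1$, and it simultaneously yields $T^{d-r}$ by duality. For (b) $\Rightarrow$ (d): with $S^r$ in hand, $\dim\mathcal{T}^r_\ell(X) = \dim H^{2r}(X,\mathbb{Q}_\ell(r))_1$; with $T^r$ the space of Tate classes is spanned by algebraic classes, so its dimension is the rank of algebraic cycles modulo the kernel of the cycle map; and $T^{d-r}$ together with the perfectness of the numerical pairing in the appropriate sense lets one identify that rank with the rank modulo numerical equivalence. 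So the pole order, which we computed is $\dim H^{2r}(X,\mathbb{Q}_\ell(r))_1$, matches the rank of numerical equivalence classes. For (d) $\Rightarrow$ (a): this is the subtler direction — one uses that, unconditionally, the rank of numerical equivalence classes is $\le$ the pole order (since algebraic classes map into the $\pi=1$ generalized eigenspace and numerically-equivalent-to-zero cycles are in the kernel — here one needs $E^1$-type facts and the fact that the cycle map kills numerical equivalence only conjecturally, so one actually argues: $\operatorname{rank}\mathcal{A}^r \ge \operatorname{rank}$ of numerical classes always, and $\le$ pole order always, with equality in (d) forcing both the cycle map to be injective modulo numerical equivalence, i.e. $E^r$, and the algebraic classes to fill up the whole Tate space, i.e. $T^r$). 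Finally (a) $\Rightarrow$ (c) requires upgrading from a single $\ell$ to all $\ell$: the point is that the dimension of $\mathcal{A}^r_\ell(X)$ (as a $\mathbb{Q}$-space) is $\ell$-independent — it's the rank of numerical equivalence classes once (a) holds — and equals the pole order, which is manifestly $\ell$-independent; so $T^r$, $E^r$, $S^r$ and their duals propagate to every $\ell$, and the $\mathbb{Q}$-structure statement is exactly the assertion that $\dim_{\mathbb{Q}}\mathcal{A}^r_\ell = \dim_{\mathbb{Q}_\ell}\mathcal{T}^r_\ell$, which we have just matched. Then (c) $\Rightarrow$ (a) is trivial.

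The main obstacle, I expect, is the direction (d) $\Rightarrow$ (a), specifically teasing apart the two separate consequences ($T^r$ and $E^r$) from the single numerical equality. One has two a priori inequalities — $\operatorname{rank}(\text{algebraic mod numerical}) \le \operatorname{rank}(\text{algebraic mod cycle-map kernel}) \le \dim\mathcal{T}^r_\ell \le \dim H^{2r}(X,\mathbb{Q}_\ell(r))_1 = \operatorname{ord}_{t=q^{-r}}\text{pole}$ — wait, the first inequality goes the wrong way, so the correct chain is $\operatorname{rank}(\text{numerical}) \le \operatorname{ord}(\text{pole})$ via: numerical classes inject into $\mathcal{T}^r_\ell$ (standard) which has dimension $\le$ pole order (standard, from semisimplification), and one must show equality in (d) collapses all intermediate inequalities, which forces the cycle map to have kernel exactly the numerical equivalence (that's $E^r$) and the Tate classes to all be algebraic (that's $T^r$). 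Making each collapse rigorous — especially handling the generalized versus genuine eigenspaces without assuming $S^r$ at that stage — is where the care is needed; the cleanest route is probably to phrase everything in terms of the action of $\pi$ on $H^{2r}$ and track dimensions of $\operatorname{Ker}(\pi-1)$ versus $\operatorname{Ker}(\pi-1)^N$.
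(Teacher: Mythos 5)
The paper itself contains no proof of Theorem \ref{t2}; it simply refers to \cite{tate1994}, \S 2, and your plan follows essentially that standard route: fix a model, compute the pole order as the dimension of the generalized eigenspace $H^{2r}(X,\mathbb{Q}_{\ell}(r))_{1}$ via the Weil conjectures, translate $T^{r}$, $E^{r}$, $S^{r}$ into equalities in a chain of dimension inequalities, and use Poincar\'e duality to pass between codimensions $r$ and $d-r$. The architecture is right, but two load-bearing points are blurred. First, you repeatedly conflate the $\mathbb{Q}$-rank of cycles modulo the kernel of the cycle map with the $\mathbb{Q}_{\ell}$-dimension $a_{r}$ of the $\mathbb{Q}_{\ell}$-span of the algebraic classes; these need not agree a priori, since a $\mathbb{Q}$-independent set of classes can become dependent over $\mathbb{Q}_{\ell}$. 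The clean bookkeeping is: $n_{r}\leq a_{r}\leq\dim\mathcal{T}_{\ell}^{r}(X)\leq(\text{pole order})$, where $n_{r}$ is the numerical rank, the first inequality and the equivalence $E^{r}(X,\ell)\Leftrightarrow a_{r}=n_{r}$ both coming from the Gram-matrix argument (cycles whose numerical classes are independent have cohomology classes that stay $\mathbb{Q}_{\ell}$-independent because the matrix of intersection numbers has rank $n_{r}$). This same argument is what makes the $\mathbb{Q}$-structure assertion in (c) come out, so it cannot be waved away.

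Second, you have the difficulty located in the wrong place. Once the chain above is in place, (d)$\Rightarrow$(a) (indeed (d)$\Rightarrow$(c)) is the easy collapse: invariants lie in the generalized eigenspace unconditionally, so no eigenspace subtlety arises, and $n_{r}$ and the pole order are $\ell$-independent, which propagates everything to all $\ell$. The genuinely delicate implications are (a)$\Rightarrow$(b) and (b)$\Rightarrow$(a)/(d), and there your sketch leaves implicit exactly the two lemmas that carry the weight: (i) $\dim H^{2r}(X,\mathbb{Q}_{\ell}(r))^{\pi=1}=\dim H^{2d-2r}(X,\mathbb{Q}_{\ell}(d-r))^{\pi=1}$ (invariants pair perfectly with coinvariants, and $\Ker(\pi-1)$ and $\Coker(\pi-1)$ have equal dimension); and (ii) the cup-product pairing between these two invariant subspaces is nondegenerate if and only if $\pi$ acts semisimply on the generalized $1$-eigenspace. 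With (i) and (ii), (a) gives $S^{r}$ (by $T^{r}$ and $E^{r}$ the pairing restricted to the spans of algebraic classes is nondegenerate on one side, via the numerical pairing), and then $T^{d-r}$, $E^{d-r}$ follow by the dimension count $n_{d-r}=n_{r}=a_{r}=\dim\mathcal{T}_{\ell}^{r}=\dim\mathcal{T}_{\ell}^{d-r}$; your sentence about the cycle map ``restricted to the generalized eigenspace being surjective onto the genuine eigenspace'' should be replaced by exactly this pairing argument. Likewise, in (b)$\Rightarrow$(d) it is $S^{r}$ through lemma (ii), not the ``perfectness of the numerical pairing'' (which is a tautology built into the definition of numerical equivalence), that forces $a_{r}=n_{r}$, i.e. $E^{r}$. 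Until (i) and (ii) are stated and proved, the proposal is a correct outline of the Tate \S 2 argument rather than a proof.
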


\noindent The proof is explained in \cite{tate1994}, \S 2.

\begin{theorem}
\label{t3}Let $X$ be a variety over $\mathbb{F}{}$ of dimension $d$. If
$S^{2d}(X\times X,\ell)$ is true, then every Frobenius map $\pi$ acts
semisimply on $H^{\ast}(X,\mathbb{Q}_{\ell})$.
\end{theorem}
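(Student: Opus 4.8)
The plan is to reduce the semisimplicity of $\pi$ on $H^{\ast}(X,\mathbb Q_\ell)=\bigoplus_i H^i(X,\mathbb Q_\ell)$ — equivalently, since $\pi$ preserves cohomological degree, the semisimplicity of the operator $\pi_i$ induced by $\pi$ on each $H^i(X,\mathbb Q_\ell)$ — to the hypothesis on $X\times X$, via the standard description of the middle cohomology of $X\times X$ as an endomorphism algebra. A model $X_1/\mathbb F_q$ defining $\pi$ gives the model $X_1\times X_1/\mathbb F_q$, whose Frobenius map $\pi\times\pi$ acts, under the K\"unneth isomorphism $H^{2d}(X\times X,\mathbb Q_\ell)\simeq\bigoplus_{i+j=2d}H^i(X,\mathbb Q_\ell)\otimes H^j(X,\mathbb Q_\ell)$, as $\pi_i\otimes\pi_j$ on the $(i,j)$-summand. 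Assuming $X$ connected (the general case, in which $\pi$ may permute components, reduces to this by passing to a power of $\pi$ and then to the individual components), Poincar\'e duality supplies a $\pi$-equivariant perfect pairing $H^i(X,\mathbb Q_\ell)\times H^{2d-i}(X,\mathbb Q_\ell(d))\to H^{2d}(X,\mathbb Q_\ell(d))\simeq\mathbb Q_\ell$ with trivial $\pi$-action on the target. Combining this with K\"unneth and keeping track of the twist by $\mathbb Q_\ell(d)$ yields a $\pi$-equivariant isomorphism
\[
H^{2d}(X\times X,\mathbb Q_\ell(d))\;\simeq\;\bigoplus_{i}\End_{\mathbb Q_\ell}\bigl(H^i(X,\mathbb Q_\ell)\bigr),
\]
under which $\pi$ acts on the $i$-th summand by conjugation $T\mapsto\pi_i\,T\,\pi_i^{-1}$, i.e.\ as $\Ad(\pi_i)$.

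Next I would record the elementary fact that, for an invertible operator $\phi$ on a finite-dimensional $\mathbb Q_\ell$-vector space $V$, $\phi$ is semisimple if and only if $\Ad(\phi)$ acts as the identity on its own generalized eigenspace $\End(V)_1$ for the eigenvalue $1$. Indeed, writing $\phi=\phi_s\phi_u$ for the multiplicative Jordan decomposition, $\Ad(\phi)=\Ad(\phi_s)\Ad(\phi_u)$ with $\Ad(\phi_s)$ semisimple and $\Ad(\phi_u)$ unipotent and commuting with it; hence $\End(V)_1$ is the commutant of $\phi_s$, and $\Ad(\phi)$ restricts there to $\Ad(\phi_u)$, which is trivial exactly when $\phi_u$ is scalar on each eigenspace of $\phi_s$ — and a unipotent operator with that property is the identity.

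To finish, I combine the two ingredients. The displayed isomorphism identifies the generalized $1$-eigenspace of $\pi$ on $H^{2d}(X\times X,\mathbb Q_\ell(d))$ with $\bigoplus_i\End(H^i(X,\mathbb Q_\ell))_1$. The hypothesis $S^{2d}(X\times X,\ell)$, applied to the Frobenius map $\pi\times\pi$, asserts exactly that $\pi$ acts as the identity on this space; hence $\Ad(\pi_i)$ is the identity on $\End(H^i(X,\mathbb Q_\ell))_1$ for every $i$, and the fact above forces each $\pi_i$ to be semisimple. Therefore $\pi$ acts semisimply on $H^{\ast}(X,\mathbb Q_\ell)$.

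The delicate point will be the construction of the $\pi$-equivariant isomorphism in the first paragraph: one must verify that, after composing K\"unneth with Poincar\'e duality and carrying along the twist by $\mathbb Q_\ell(d)$, the Frobenius acts by honest conjugation $\Ad(\pi_i)$, with no spurious scalar factor. This rests on the target $H^{2d}(X,\mathbb Q_\ell(d))$ of the duality pairing being $\mathbb Q_\ell$ with trivial Frobenius action, which is precisely where one needs $X$ connected so that $\pi$ does not permute its components. The linear-algebra input, though essential, is routine once stated.
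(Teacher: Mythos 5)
Your proof is correct and follows essentially the same route as the paper's: both use the K\"unneth formula and Poincar\'e duality to place the relevant information inside the generalized $1$-eigenspace of $\pi$ on $H^{2d}(X\times X,\mathbb{Q}_{\ell}(d))$ --- the paper eigenvalue by eigenvalue via $H^{i}(X,\mathbb{Q}_{\ell})_{a}\otimes H^{2d-i}(X,\mathbb{Q}_{\ell}(d))_{1/a}$, you all at once via $\bigoplus_{i}\End(H^{i}(X,\mathbb{Q}_{\ell}))$ with $\pi$ acting as $\Ad(\pi_{i})$. The Jordan-decomposition lemma you spell out is precisely the linear algebra the paper leaves implicit in ``from which the claim follows'', so the two arguments coincide in substance.
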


\begin{proof}
If $a$ occurs as an eigenvalue of $\pi$ on $H^{r}(X,\mathbb{Q}{}_{\ell})$,
then $1/a$ occurs as an eigenvalue of $\pi$ on $H^{2d-r}(X,\mathbb{Q}{}_{\ell
}(d))$ (by Poincar\'{e} duality), and%
\[
H^{r}(X,\mathbb{Q}{}_{\ell})_{a}\otimes H^{2d-r}(X,\mathbb{Q}{}_{\ell
}(d))_{1/a}\subset H^{2d}(X\times X,\mathbb{Q}{}_{\ell}(d))_{1}%
\]
(K\"{u}nneth formula), from which the claim follows.
\end{proof}

An $\ell$\emph{-adic} \emph{Tate }$q$-\emph{structure} is a finite dimensional
$\mathbb{Q}{}_{\ell}$-vector space together with a linear (Frobenius) map
$\pi$ whose characteristic polynomial has rational coefficients and whose
eigenvalues are \emph{Weil }$q$\emph{-numbers}, i.e., algebraic numbers
$\alpha$ such that, for some integer $m$ called the \emph{weight} of $\alpha$,
$|\rho(\alpha)|=q^{m/2}$ for every homomorphism $\rho\colon\mathbb{Q}{}%
[\alpha]\rightarrow\mathbb{C}{}$ and, for some integer $n$, $q^{n}\alpha$ is
an algebraic integer. When the eigenvalues of $\pi$ are all algebraic
integers, the Tate structure is said to be \emph{effective}. For example, for
a variety $X$ over $\mathbb{F}{}_{q}$, $H^{r}(X_{\mathbb{F}{}},\mathbb{Q}%
{}_{\ell}(s))$ is a Tate $q$-structure of weight $r-2s$, which is effective if
$s=0$.

A Tate $p^{n_{1}}$-structure $\pi_{1}$ and a Tate $p^{n_{2}}$-structure
$\pi_{2}$ on a $\mathbb{Q}{}_{\ell}$-vector space $V$ are said to be
equivalent if $\pi_{1}^{n_{1}N}=\pi_{2}^{n_{2}N}$ for some $N$. This is an
equivalence relation, and an $\ell$\emph{-adic Tate structure} is a finite
dimensional $\mathbb{Q}{}_{\ell}$-vector space together with an equivalence
class of Tate $q$-structures. For example, for a variety $X$ over
$\mathbb{F}{}$, $H^{r}(X,\mathbb{Q}{}_{\ell}(s))$ is a Tate structure of
weight $r-2s$, which is effective if $s=0$.

Let $X$ be a smooth projective variety over $\mathbb{F}{}$. For each $r$, let
$F_{a}^{r}H^{i}(X,\mathbb{Q}{}_{\ell})$ denote the subspace of $H^{i}%
(X,\mathbb{Q}_{\ell})$ of classes with support in codimension $r$, i.e.,%
\[
F_{a}^{r}H^{i}(X,\mathbb{Q}{}_{\ell})=\bigcup\nolimits_{U}\Ker(H^{i}%
(X,\mathbb{Q}_{\ell})\rightarrow H^{i}(U,\mathbb{Q}{}_{\ell}))
\]
where $U$ runs over the open subvarieties of $X$ such that $X\smallsetminus U$
has codimension $\geq r$. If $Z=X\smallsetminus U$ has codimension $r$ and
$\tilde{Z}\rightarrow Z$ is a desingularization of $Z$, then%
\[
H^{i-2r}(\tilde{Z},\mathbb{Q}{}_{\ell})(-r)\rightarrow H^{i}(X,\mathbb{Q}%
{}_{\ell})\rightarrow H^{i}(U,\mathbb{Q}{}_{\ell})
\]
is exact (see \cite{deligne1974t}, 8.2.8; a similar proof applies to \'{e}tale
cohomology). This shows that $F_{a}^{r}H^{i}(X,\mathbb{Q}{}_{\ell})$ is an
effective Tate substructure of $H^{i}(X,\mathbb{Q}{}_{\ell})$ such that
$F_{a}^{r}H^{i}(X,\mathbb{Q}{}_{\ell})(r)$ is still effective.

\begin{conjecture}
[Generalized Tate conjecture]\label{t4}For a smooth projective variety $X$
over $\mathbb{F}{}$, every Tate substructure $V\subset H^{i}(X,\mathbb{Q}%
{}_{\ell})$ such that $V(r)$ is effective is contained in $F_{a}^{r}%
H^{i}(X,\mathbb{Q}{}_{\ell})$ (cf. \cite[10.3]{grothendieck1968}%
).\nocite{grothendieck1968}
\end{conjecture}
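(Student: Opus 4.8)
\noindent The plan is to deduce this from the ordinary Tate conjecture, in all codimensions and for all $\ell$, together with the semisimplicity of Frobenius; no unconditional proof is to be expected, since (as noted at the end) the statement contains the ordinary conjecture.

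The first step rephrases the filtration $F_a^r$ in terms of algebraic correspondences. If $Y$ is smooth projective and $\gamma$ is an algebraic correspondence inducing a map $\gamma_{\ast}\colon H^{i-2r}(Y,\mathbb{Q}_\ell)\to H^i(X,\mathbb{Q}_\ell)(r)$, then $\gamma$ is the class of a cycle $\Gamma$ on $Y\times X$ whose dimension is forced to equal $d-r$ regardless of $\dim Y$, and $\gamma_{\ast}(\alpha)=\operatorname{pr}_{X,\ast}(\operatorname{pr}_Y^{\ast}\alpha\cdot[\Gamma])$ is supported on the closed subset $\operatorname{pr}_X(\Gamma)$, which has codimension $\ge r$ in $X$. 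Hence the image of every such $\gamma_{\ast}$ lies in $F_a^rH^i(X,\mathbb{Q}_\ell)$, and conversely, by the exact sequence quoted above, the Gysin maps $H^{i-2r}(\tilde Z,\mathbb{Q}_\ell)(-r)\to H^i(X,\mathbb{Q}_\ell)$ (for $Z\subset X$ closed of codimension $r$ and $\tilde Z\to Z$ a desingularization) are correspondences of exactly this kind, and their images exhaust $F_a^rH^i(X,\mathbb{Q}_\ell)$. So, given a Tate substructure $V$ with $V(r)$ effective, it suffices to exhibit a single algebraic correspondence $\gamma$ with $V\subset\operatorname{im}(\gamma_{\ast})$. (If $2r>i$, an elementary estimate of the conjugates of the Frobenius eigenvalues of $V$ forces $V=0$; and when $2r\le i$, both $H^{i-2r}(Y)(-r)$ and $H^i(X)$ have weight $i$, so there is no weight obstruction.)

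The second step passes to motives. Granting the Tate conjecture for $X\times X$ and for the products $Y\times X$ that occur, the semisimplicity of Frobenius (which, by Theorem~\ref{t3}, follows on all of $H^{\ast}(X)$ from $S^{2d}(X\times X,\ell)$), and the conjectures $E$, the subspace $V$ is the $\ell$-adic realization of a submotive $M$ of $h^i(X)$, cut out by an algebraic idempotent correspondence along a Frobenius-stable complement. Then a correspondence $\gamma$ as in the first step with $V\subset\operatorname{im}(\gamma_{\ast})$ exists if and only if $M(r)$ is an \textit{effective} motive, i.e.\ a direct summand of $h^w(W)$ for some smooth projective $W$, where $w=i-2r$: given such a $W$, the composite $h^w(W)(-r)\twoheadrightarrow M(r)(-r)=M\hookrightarrow h^i(X)$ is, by the Tate conjecture, realized by an algebraic correspondence with image $M$, hence with realization exactly $V$. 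Thus the whole conjecture reduces to the assertion that a motive over $\mathbb{F}$ whose $\ell$-adic realization is effective is an effective motive.

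This last assertion is the conceptual core, and where I expect the real work to lie --- beyond the fact that the ordinary Tate conjecture, on which everything rests, is itself open. One direction is formal: an effective motive has effective realization. The converse, under the standing conjectures, comes from the description of the category of motives over $\mathbb{F}$: it is semisimple, its simple objects are classified by their Frobenius eigenvalues up to conjugacy, and every algebraic-integer Weil number occurs as the Frobenius eigenvalue of an effective simple motive (Honda--Tate, suitably extended); decomposing $M(r)$ into simples then completes the argument. Finally, it is worth stressing how strong the statement is: for $i=2$, $r=1$, Kronecker's theorem shows that the largest Tate substructure $V\subset H^2(X,\mathbb{Q}_\ell)$ with $V(1)$ effective is the one with $V(1)=\mathcal{T}_\ell^1(X)$, while $F_a^1H^2(X,\mathbb{Q}_\ell)$ is the $\mathbb{Q}_\ell$-span of the divisor classes, so this case is precisely $T^1(X)$. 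Consequently the conjecture is unconditionally known only where $T(X)$ is, together with the trivial ranges $r=0$ and $2r>i$; in particular it holds for abelian varieties and their powers, where the motives are generated by the $H^1$ of abelian varieties and the classification above is explicit.
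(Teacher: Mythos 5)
The statement you were asked to prove is a conjecture. The paper does not prove it --- no unconditional proof is known, and indeed (as you yourself observe) it contains the ordinary Tate conjecture --- so there is no proof of record to measure your argument against. What the paper offers is the conditional Theorem \ref{t5}: if the Tate conjecture holds for all varieties $A\times X$ with $A$ an abelian variety (and some $\ell$), then the generalized Tate conjecture holds for $X$ (and the same $\ell$); even there the proof is only cited (Milne and Ramachandran 2006, 1.10). Judged as a conditional reduction, your blueprint has essentially the same shape as that result: identify $F_{a}^{r}H^{i}(X,\mathbb{Q}_{\ell})$ with what is reachable by Gysin-type algebraic correspondences supported in codimension $r$ (your first step is fine, and the converse inclusion is exactly the exact sequence quoted before the conjecture), then use Honda--Tate to realize the algebraic-integer Weil numbers occurring in $V(r)$ in the cohomology of auxiliary (abelian) varieties, and use the Tate conjecture on a product together with semisimplicity to convert a Frobenius-equivariant map onto $V$ into an algebraic correspondence.

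Two caveats. First, your hypothesis load is heavier and less precise than the paper's: Theorem \ref{t5} assumes only $T(A\times X,\ell)$ for abelian $A$ and a single $\ell$, while you invoke $T$, $E$ and $S$ wholesale, plus the further assertion that every motive over $\mathbb{F}$ whose $\ell$-adic realization is effective is an effective motive. Second, that assertion is exactly where the substance lies, and you only gesture at it: Honda--Tate classifies weight-one algebraic-integer Weil numbers by simple abelian varieties, but to conclude that an algebraic-integer Weil number of weight $i-2r$ actually occurs in $H^{i-2r}$ of a variety (equivalently, that the corresponding simple motive is effective) one must, for instance, factor it --- after a harmless base extension --- into weight-one Weil numbers so as to realize it in $\bigwedge^{i-2r}H^{1}$ of an abelian variety; this is the real content of the cited proof, not a formal consequence of ``simple motives are classified by Weil numbers up to conjugacy.'' So: a sound and honest reduction, consistent in spirit with the paper's Theorem \ref{t5}, but not a proof of the conjecture (none exists), and with its key lemma asserted rather than established.
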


\begin{theorem}
\label{t5}\noindent Let $X$ be a variety over $\mathbb{F}{}$. If the Tate
conjecture holds for all varieties of the form $A\times X$ with $A$ an abelian
variety (and some $\ell$), then the generalized Tate conjecture holds for $X$
(and the same $\ell$).
\end{theorem}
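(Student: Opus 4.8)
The plan is to realize every Tate substructure $V\subseteq H^i(X,\mathbb{Q}_\ell)$ with $V(r)$ effective as the image of an algebraic correspondence coming from an abelian variety, and then to use that such a correspondence, by its codimension, automatically has image inside $F_a^rH^i(X,\mathbb{Q}_\ell)$. We may assume $V(r)\neq 0$; then $i\ge 2r$ and $V(r)$ is a nonzero effective Tate structure, pure of weight $w:=i-2r\ge 0$.

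The core construction is an abelian variety $A$ over $\mathbb{F}$, of dimension $g$ say, together with a morphism of $\ell$-adic Tate structures $u\colon H^{w}(A,\mathbb{Q}_\ell)\to H^i(X,\mathbb{Q}_\ell)(r)$ with $\im(u)=V(r)$. Granting (see the last paragraph) that $\pi$ acts semisimply on $H^i(X,\mathbb{Q}_\ell)$, the Tate structure $V(r)$ is semisimple, and for $w\ge 1$ Honda--Tate theory realizes it as a subquotient of $H^\ast(A_0,\mathbb{Q}_\ell)$ for some abelian variety $A_0$ --- necessarily, by purity, of $H^{w}(A_0,\mathbb{Q}_\ell)$ --- whence, $\pi$ acting semisimply on the cohomology of an abelian variety (Weil), as a direct summand of $H^{w}(A,\mathbb{Q}_\ell)$ for suitable $A$; one takes for $u$ a projection onto that summand followed by the inclusion $V(r)\hookrightarrow H^i(X,\mathbb{Q}_\ell)(r)$. (When $w=0$, i.e.\ $i=2r$, one argues separately: $F_a^rH^{2r}(X,\mathbb{Q}_\ell)$ is then the span of the classes of codimension-$r$ cycles and $V$ consists of Tate classes, so $V\subseteq F_a^rH^{2r}(X,\mathbb{Q}_\ell)$ is exactly $T^r(X,\ell)$, which follows from the hypothesis on restricting an algebraic cycle on $A\times X$ to a fibre $\{a\}\times X$.)

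Now the hypothesis is applied. By Poincar\'e duality on $A$ and the K\"unneth formula, a morphism of Tate structures $H^{w}(A,\mathbb{Q}_\ell)\to H^i(X,\mathbb{Q}_\ell)(r)$ is the same as a Tate class in $H^{2(g+r)}(A\times X,\mathbb{Q}_\ell(g+r))$; by the Tate conjecture for $A\times X$ such a class equals $\sum_j c_j[\gamma_j]$ for cycles $\gamma_j\in Z^{g+r}(A\times X)$, and correspondingly $u=\sum_j c_j\,\gamma_{j\ast}|_{H^{w}(A)}$. Each $\gamma_j$ has codimension $g+r$ in the $(g+d)$-dimensional $A\times X$, so its support has dimension $\le d-r$; hence $\pr_X(|\gamma_j|)$ lies in a closed subset of $X$ of codimension $\ge r$, and therefore $\im\bigl(\gamma_{j\ast}\colon H^{w}(A,\mathbb{Q}_\ell)\to H^i(X,\mathbb{Q}_\ell)\bigr)\subseteq F_a^rH^i(X,\mathbb{Q}_\ell)$. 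Consequently $V=\im(u)\subseteq F_a^rH^i(X,\mathbb{Q}_\ell)$; as $i$ and $r$ were arbitrary, the generalized Tate conjecture holds for $X$.

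The main obstacle is the realization step, and within it the semisimplicity of $\pi$ on $H^i(X,\mathbb{Q}_\ell)$: a non-semisimple $V$ can never be the image of a morphism out of the (always semisimple) cohomology of an abelian variety, so the method genuinely requires it. I would extract this from the hypothesis by the device of Theorem~\ref{t3}: a nontrivial Jordan block of $\pi$ on $H^i(X,\mathbb{Q}_\ell)$ with eigenvalue $\gamma$, tensored via K\"unneth with an eigenvector of $\pi$ on $H^i(A',\mathbb{Q}_\ell)$ for the weight-$i$ algebraic integer $q^i/\gamma$ (which occurs there by the same Honda--Tate input), produces a nontrivial Jordan block of $\pi$ on $H^{2i}(A'\times X,\mathbb{Q}_\ell(i))_1$, contradicting the semisimplicity statement for $A'\times X$. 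The remaining points --- keeping track of Tate twists, and identifying $\im(\gamma_{j\ast})$ with a subspace killed by restriction to the complement of a codimension-$\ge r$ closed set --- are routine.
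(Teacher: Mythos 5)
The paper itself does not write out a proof of Theorem \ref{t5} (it defers to the reference cited there), so I can only compare your proposal with the expected argument; your main line --- realize the effective substructure, via Honda--Tate, as the image of a Frobenius-equivariant map $u\colon H^{w}(A,\mathbb{Q}_\ell)\to H^{i}(X,\mathbb{Q}_\ell)(r)$, read $u$ as a Tate class in $H^{2(g+r)}(A\times X,\mathbb{Q}_\ell(g+r))$, apply $T(A\times X,\ell)$, and use the codimension of the support of an algebraic correspondence to force its image into $F_a^{r}H^{i}(X,\mathbb{Q}_\ell)$ --- is exactly that standard route, and the support step is fine as you state it.

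The genuine gap is the semisimplicity step, which you yourself identify as indispensable. You produce a nontrivial Jordan block of $\pi$ on $H^{2i}(A'\times X,\mathbb{Q}_\ell(i))_1$ and declare that this contradicts ``the semisimplicity statement for $A'\times X$''; but no such statement is available. The hypothesis of the theorem is Conjecture $T$ for the varieties $A\times X$, not Conjecture $S$; Theorem \ref{t3} does not help, since its hypothesis is again an $S$-statement (for $(A'\times X)\times(A'\times X)$); and by Theorem \ref{t2} the only way offered in the paper to deduce $S^{i}(A'\times X,\ell)$ from $T^{i}(A'\times X,\ell)$ and $T^{\dim -i}(A'\times X,\ell)$ is to know $E^{i}(A'\times X,\ell)$, which is unknown --- Corollary \ref{t19} gives $E$ only for abelian varieties, only for infinitely many (not prescribed) $\ell$, and says nothing about products $A'\times X$ with $X$ arbitrary. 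So the crucial step of your proof appeals to a conjecture that is neither assumed nor derivable from the tools at hand, and the difficulty is intrinsic rather than cosmetic: since $F_a^{r}H^{2r}(X,\mathbb{Q}_\ell)$ is spanned by classes of codimension-$r$ cycles, on which a power of $\pi$ acts as multiplication by a power of $q$, the case $i=2r$ of the generalized Tate conjecture already implies $S^{r}(X,\ell)$, so any proof of the full statement must genuinely extract semisimplicity from the hypothesis; your proposed extraction is circular, and your separate $w=0$ argument has the same defect (``$V$ consists of Tate classes'' presupposes that $\pi$ acts semisimply on the relevant generalized eigenspace). A lesser point: the assertion that Honda--Tate realizes every effective weight-$w$ structure inside $H^{w}(A_0,\mathbb{Q}_\ell)$ is the real arithmetic input --- one must write each weight-$w$ Weil $q$-integer, after enlarging the CM field and replacing $q$ by a power, as a product of $w$ weight-$1$ Weil $q$-integers by a slope (Newton polygon) decomposition --- and deserves at least a sketch, together with the slope bound needed to know that $q^{i}/\gamma$ is an algebraic integer in your Jordan-block construction.
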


\noindent The proof is explained in \cite{milneR2006}, 1.10.

\begin{remark}
\label{t6}There are $p$-analogues of all of the above conjectures and
statements. Let $W(k{})$ be the ring of Witt vectors with coefficients in a
perfect field $k$, and let $B(k)$ be its field of fractions. Let $\sigma$ be
the automorphism of $W(k)$ (or $B(k)$) that acts as $x\mapsto x^{p}$ on $k$.
For a variety $X$ over $k$, let $H_{p}^{r}(X)$ denote the crystalline
cohomology group with coefficients in $B(k)$. It is a finite dimensional
$B(k)$-vector space with a $\sigma$-linear Frobenius map $F$. Define
\[
\mathcal{T}{}_{p}^{r}(X)=\bigcup\nolimits_{X_{1}/k_{1}}H_{p}^{2r}%
(X_{1})(r)^{F^{n_{1}}=1}\quad\text{(}p^{n_{1}}=|k_{1}|\text{).}%
\]
This is a finite dimensional $\mathbb{Q}{}_{p}$-vector space, which the Tate
conjecture $T{}^{r}(X,p)$ says is spanned by algebraic classes.
\end{remark}

\subsection{\noindent Motivic interpretation}

Let $\Mot(\mathbb{F}{})$ be the category of motives over $\mathbb{F}{}$
defined using algebraic cycles modulo numerical equivalence. It is known that
$\Mot(\mathbb{F}{})$ is a semisimple Tannakian category (\cite{jannsen1992}).
Etale cohomology defines a functor $\omega_{\ell}$ on $\Mot(\mathbb{F}{})$ if
and only if Conjecture $E(X,\ell)$ holds for all varieties $X$. Assuming this,
conjecture $T(X,\ell)$ holds for all $X$ if and only if, for all $X$ and $Y$,
the image of the map%
\begin{equation}
\Hom(X,Y)\otimes\mathbb{Q}{}_{\ell}\rightarrow\Hom_{\mathbb{Q}{}_{\ell}%
}(\omega_{\ell}(X),\omega_{\ell}(Y)) \label{e2}%
\end{equation}
consists of the homomorphisms $\alpha\colon\omega_{\ell}(X)\rightarrow
\omega_{\ell}(Y)$ such that $\alpha\circ\pi_{X}=\pi_{Y}\circ\alpha$ for some
Frobenius maps $\pi_{X}$ and $\pi_{Y}$ of $X$ and $Y$ (necessarily
$q$-Frobenius maps for the same $q$). In other words, conjectures $E(X,\ell)$
and $T(X,\ell)$ hold for all $X$ if and only if $\ell$-adic \'{e}tale
cohomology defines an equivalence from $\Mot(\mathbb{F}{})\otimes
_{\mathbb{Q}{}}\mathbb{Q}{}_{\ell}$ to the category of $\ell$-adic Tate structures.

\section{Divisors on abelian varieties}

Tate (1966)\nocite{tate1966e} proved the Tate conjecture for divisors on
abelian varieties over $\mathbb{F}$,${}$ in the form:

\begin{theorem}
\label{t7}For all abelian varieties $A$ and $B$ over $\mathbb{F}{}_{q}$, the
map%
\begin{equation}
\Hom(A,B)\otimes\mathbb{Z}{}_{\ell}\rightarrow\Hom_{\mathbb{Q}{}_{\ell}%
}(T_{\ell}A,T_{\ell}B)^{\Gal(\mathbb{F}{}/\mathbb{F}{}_{q})} \label{e3}%
\end{equation}
is an isomorphism.
\end{theorem}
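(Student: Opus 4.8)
The plan is to prove this by an argument with two main ingredients: a reduction, via Weil's theory, to a statement about the algebra $\End^0(A)=\End(A)\otimes\mathbb{Q}$ and the Tate module, and then the "semisimplicity plus $1$-dimensional centraliser" trick that is the heart of all such finiteness proofs. First I would reduce to the case $A=B$: replacing $A$ by $A\times B$, a homomorphism $A\to B$ is recovered from an endomorphism of $A\times B$ (compose with the projection and the inclusion), and $T_\ell(A\times B)\simeq T_\ell A\oplus T_\ell B$ Galois-equivariantly, so it suffices to show that $\End(A)\otimes\mathbb{Z}_\ell\to\End_{\mathbb{Z}_\ell}(T_\ell A)^{\Gal}$ is an isomorphism for every $A$. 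Injectivity is classical (an endomorphism killing $T_\ell A$ is divisible by $\ell^n$ for all $n$ in $\End(A)$, which is a finitely generated free $\mathbb{Z}$-module). So the content is surjectivity.

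Next I would set up the linear algebra. Let $\pi\in\End(A)$ be the $q$-Frobenius; it generates a commutative semisimple subalgebra $\mathbb{Q}[\pi]\subset\End^0(A)$, because Weil proved $\pi$ acts semisimply on $V_\ell A=T_\ell A\otimes\mathbb{Q}_\ell$ with characteristic polynomial having distinct-in-the-right-sense behaviour — more precisely its minimal polynomial has no repeated roots after one checks the eigenvalues are Weil numbers, and in any case $\pi$ is semisimple as an element of $\End^0(A)$. The Galois action on $V_\ell A$ factors through $\widehat{\mathbb{Z}}$ generated by Frobenius, which acts as $\pi$, so
\[
\End_{\mathbb{Q}_\ell}(V_\ell A)^{\Gal}=\End_{\mathbb{Q}_\ell[\pi]}(V_\ell A),
\]
the commutant of $\pi$. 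The target of the map (after $\otimes\mathbb{Q}_\ell$) is therefore the commutant of $\mathbb{Q}_\ell[\pi]$ in $\End_{\mathbb{Q}_\ell}(V_\ell A)$, while the source is $\End^0(A)\otimes\mathbb{Q}_\ell$. I would count dimensions: writing $V_\ell A$ as a module over the semisimple algebra $\mathbb{Q}_\ell[\pi]=\prod_i K_i$, it decomposes as $\bigoplus_i V_i$ with $V_i$ a $K_i$-vector space of dimension $m_i$, so the commutant has $\mathbb{Q}_\ell$-dimension $\sum_i m_i^2[K_i:\mathbb{Q}_\ell]$. On the other hand $\sum_i m_i[K_i:\mathbb{Q}_\ell]=2\dim A$. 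So surjectivity is equivalent to the equality $\dim_{\mathbb{Q}}\End^0(A)=\sum_i m_i^2[K_i:\mathbb{Q}_\ell]$, which in turn would follow if one knew $\End^0(A)$ acts on $V_\ell A$ making it a module over the centraliser that is "as large as possible" — concretely, if each $V_i$ is free of rank one over a division algebra whose dimension one can pin down.

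The crux — and this is the step I expect to be the main obstacle — is to produce enough endomorphisms of $A$, i.e. to show the map (\ref{e3})$\otimes\mathbb{Q}$ is surjective, equivalently that $V_\ell A$ is a \emph{semisimple} $\End^0(A)[\pi]$-module with the right multiplicities. The standard route, which I would follow, is Tate's original divisibility/descent argument: given $u\in\End_{\mathbb{Q}_\ell[\pi]}(V_\ell A)$, one approximates it $\ell$-adically by $\mathbb{Q}$-endomorphisms using a key finiteness input — that there are only finitely many abelian varieties (up to isomorphism) over $\mathbb{F}_q$ of a given dimension that are isogenous to $A$, or equivalently that the set of $\mathbb{Z}_\ell[\pi]$-lattices in $V_\ell A$ stable under a given finite-index order, up to the action of $\End(A)$, is finite. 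This finiteness comes from the Mordell–Weil-type finiteness of isogeny classes over finite fields (Zarhin's trick reduces to polarised abelian varieties, and there are finitely many of bounded degree). From such a $u$-stable lattice one builds a subgroup scheme, hence an isogeny $A\to A'$ with $A'\cong A$, hence an element of $\End^0(A)$ approximating $u$; since the commutant is a closed subspace, $u$ itself lies in the closure of the image, and a dimension count (the image is a $\mathbb{Q}_\ell$-subspace, being $\End^0(A)\otimes\mathbb{Q}_\ell$ mapped linearly) forces surjectivity. Finally I would upgrade from the $\mathbb{Q}_\ell$-statement to the $\mathbb{Z}_\ell$-statement of the theorem: the image of $\End(A)\otimes\mathbb{Z}_\ell$ is an open $\mathbb{Z}_\ell$-submodule of $\End_{\mathbb{Z}_\ell}(T_\ell A)^{\Gal}$, and a saturation argument — using that $\Hom(A,B)$ is saturated in $\Hom_{\mathbb{Z}_\ell}(T_\ell A,T_\ell B)$ because a homomorphism divisible by $\ell$ on Tate modules is divisible by $\ell$ as a homomorphism (the kernel argument again) — shows the image is all of it.
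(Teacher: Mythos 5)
Your proposal is essentially Tate's original argument, which is exactly what the paper sketches: reduce to $A=B$, use the finiteness of isomorphism classes of abelian varieties carrying a polarization of bounded degree to manufacture endomorphisms from Galois-stable lattices $X(n)=T_\ell A\cap W+\ell^n T_\ell A$, and then conclude by the semisimplicity of Frobenius together with a commutant dimension count, finally saturating to pass from $\mathbb{Q}_\ell$ to $\mathbb{Z}_\ell$. The only cosmetic difference is your appeal to Zarhin's trick for the finiteness input, which is unnecessary over $\mathbb{F}_q$ since, as in the paper, each $A(n)$ inherits a polarization of degree $d^2$ and so lies in one of finitely many isomorphism classes.
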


\begin{pf}
It suffices to prove this with $A=B$ (because $\Hom(A,B)$ is a direct summand
of $\End(A\times B)$). Choose a polarization on $A$, of degree $d^{2}$ say. It
defines a nondegenerate skew-symmetric form on $V_{\ell}A$, and a maximal
isotropic subspace $W$ of $V_{\ell}A$ stable under $\Gal(\mathbb{F}%
{}/\mathbb{F}{}_{q})$ will have dimension $g=\frac{1}{2}\dim V_{\ell}A$. For
$n\in\mathbb{N}{}$, let
\[
X(n)=T_{\ell}A\cap W+\ell^{n}T_{\ell}A\subset T_{\ell}A.
\]
For each $n$, there exists an abelian variety $A(n)$ and an isogeny
$A(n)\rightarrow A$ mapping $T_{\ell}A(n)$ isomorphically onto $X(n)$. There
are only finitely many isomorphism classes of abelian varieties in the set
$\{A(n)\}$ because each $A(n)$ has a polarization of degree $d^{2}$, and hence
can be realized as a closed subvariety of $\mathbb{P}{}^{3^{g}d-1}$ of degree
$3^{g}d(g!)$. Thus two of the $A(n)$'s are isomorphic, and we have constructed
a (nonobvious) isogeny. From this beginning, Tate was able to deduce the
theorem by exploiting the semisimplicity of the Frobenius map.
\end{pf}

\begin{corollary}
\label{t8}For varieties $X$ and $Y$ over $\mathbb{F}{}$,%
\[
T^{1}(X\times Y,\ell)\iff T^{1}(X,\ell)+T^{1}(Y,\ell).
\]

\end{corollary}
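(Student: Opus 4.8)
\begin{pf}
The plan is to combine the K\"unneth decomposition in degree~$2$ with Tate's theorem (Theorem~\ref{t7}) on homomorphisms of abelian varieties. After descending $X$, $Y$, the two projections, and the Albanese maps (used below) to a common finite subfield $k_{1}\subset\mathbb{F}$, the canonical Galois-equivariant isomorphism
\[
H^{2}(X\times Y,\mathbb{Q}_{\ell}(1))\simeq H^{2}(X,\mathbb{Q}_{\ell}(1))\ \oplus\ M\ \oplus\ H^{2}(Y,\mathbb{Q}_{\ell}(1)),\qquad M:=\bigl(H^{1}(X,\mathbb{Q}_{\ell})\otimes H^{1}(Y,\mathbb{Q}_{\ell})\bigr)(1),
\]
is a direct sum of Tate substructures, so passing to Tate classes (which, being a union of Galois invariants, respects Galois-stable direct sums) yields $\mathcal{T}^{1}_{\ell}(X\times Y)\simeq\mathcal{T}^{1}_{\ell}(X)\oplus M'\oplus\mathcal{T}^{1}_{\ell}(Y)$, where $M'$ is the space of Tate classes in $M$ and the two outer summands are the images of $\mathrm{pr}_{1}^{*}$ and $\mathrm{pr}_{2}^{*}$. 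Each $\mathrm{pr}_{i}^{*}$ carries algebraic classes to algebraic classes and admits the section $x\mapsto(x,y_{0})$ coming from a point $y_{0}\in Y(\mathbb{F})$; restricting a cycle class along that section shows that a class of $\mathcal{T}^{1}_{\ell}(X)$ is algebraic on $X$ if and only if its image under $\mathrm{pr}_{1}^{*}$ is algebraic on $X\times Y$ (and likewise for $Y$). This gives $T^{1}(X\times Y,\ell)\Rightarrow T^{1}(X,\ell)+T^{1}(Y,\ell)$ immediately, and reduces the converse to the unconditional claim that every class in $M'$ is algebraic on $X\times Y$.

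For that, choose base points in $X(\mathbb{F})$ and $Y(\mathbb{F})$ and let $\alpha_{X}\colon X\to A:=\mathrm{Alb}(X)$, $\alpha_{Y}\colon Y\to B:=\mathrm{Alb}(Y)$ be the Albanese maps; they induce Galois-equivariant isomorphisms on $H^{1}$, so $(\alpha_{X}\times\alpha_{Y})^{*}$ restricts to an isomorphism of Tate substructures $\bigl(H^{1}(A,\mathbb{Q}_{\ell})\otimes H^{1}(B,\mathbb{Q}_{\ell})\bigr)(1)\xrightarrow{\ \sim\ }M$. Since pullback of algebraic cycles along a morphism of smooth projective varieties is compatible with the cycle map and with the K\"unneth decomposition, it suffices to prove the claim with $A\times B$ in place of $X\times Y$. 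Now $H^{1}(A,\mathbb{Q}_{\ell})\simeq\Hom_{\mathbb{Q}_{\ell}}(V_{\ell}A,\mathbb{Q}_{\ell})$ and the Weil pairing gives $H^{1}(B,\mathbb{Q}_{\ell}(1))\simeq V_{\ell}B^{\vee}$, hence a Galois-equivariant identification
\[
\bigl(H^{1}(A,\mathbb{Q}_{\ell})\otimes H^{1}(B,\mathbb{Q}_{\ell})\bigr)(1)\simeq\Hom_{\mathbb{Q}_{\ell}}(V_{\ell}A,V_{\ell}B^{\vee}),
\]
under which the Tate classes become $\bigcup_{k_{1}}\Hom(V_{\ell}A,V_{\ell}B^{\vee})^{\Gal(\mathbb{F}/k_{1})}$; applying Theorem~\ref{t7} to $A$ and $B^{\vee}$ over each $k_{1}$ and passing to the union over $k_{1}$ identifies this with the image of $\Hom_{\mathbb{F}}(A,B^{\vee})\otimes\mathbb{Q}_{\ell}$.

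Finally, every homomorphism $\phi\colon A\to B^{\vee}$ is realized by a divisorial correspondence: $\phi$ corresponds to a line bundle $L_{\phi}$ on $A\times B$ trivial on $A\times\{0\}$ and on $\{0\}\times B$, and, writing $L_{\phi}=\mathcal{O}(D_{\phi})$, the triviality of those two restrictions says precisely that the $H^{2}(A)\otimes H^{0}(B)$ and $H^{0}(A)\otimes H^{2}(B)$ K\"unneth components of $c^{1}(D_{\phi})$ vanish, so $c^{1}(D_{\phi})\in\bigl(H^{1}(A,\mathbb{Q}_{\ell})\otimes H^{1}(B,\mathbb{Q}_{\ell})\bigr)(1)$; a standard computation (compatibility of the cycle class map with the Weil pairing, as in \cite{milne1986ab}) identifies it with $V_{\ell}(\phi)$. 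Thus every class in the middle K\"unneth component of $\mathcal{T}^{1}_{\ell}(A\times B)$ is a $\mathbb{Q}_{\ell}$-combination of classes of divisorial correspondences; pulling these back along $\alpha_{X}\times\alpha_{Y}$ exhibits every element of $M'$ as an algebraic class on $X\times Y$, and together with the two outer summands this proves $T^{1}(X,\ell)+T^{1}(Y,\ell)\Rightarrow T^{1}(X\times Y,\ell)$. The main obstacle is this middle summand: arranging the identification of $M'$ with $\Hom_{\mathbb{F}}(A,B^{\vee})\otimes\mathbb{Q}_{\ell}$ so that Theorem~\ref{t7} applies, and verifying that the cycle class of the correspondence attached to $\phi$ is indeed $V_{\ell}(\phi)$ and lands in the right K\"unneth component; the two outer summands and both directions of the equivalence are then formal.
\end{pf}
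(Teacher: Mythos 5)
Your proof is correct and follows essentially the same route as the paper, which simply says to compare the decomposition $\mathrm{NS}(X\times Y)\simeq\mathrm{NS}(X)\oplus\mathrm{NS}(Y)\oplus\Hom(\mathrm{Alb}(X),\Pic^{0}(Y))$ with the K\"unneth decomposition of $H^{2}(X\times Y,\mathbb{Q}_{\ell}(1))$; your middle-summand argument (Albanese pullback, Theorem \ref{t7} applied to $\Hom(A,B^{\vee})$, and divisorial correspondences) is exactly the content packed into that comparison, spelled out in full.
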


\begin{proof}
Compare the decomposition$\!$%
\[
\mathrm{NS}(X\times Y)\simeq\mathrm{NS}(X)\oplus\mathrm{NS}(Y)\oplus
\Hom(\mathrm{Alb}(X),\Pic^{0}(Y))
\]
with the similar decomposition of $H^{2}(X\times Y,\mathbb{Q}{}_{\ell}(1))$
given by the K\"{u}nneth formula.
\end{proof}

\begin{corollary}
\label{t9}The Tate conjecture $T^{1}(X)$ is true when $X$ is a product of
curves and abelian varieties over $\mathbb{F}{}$.
\end{corollary}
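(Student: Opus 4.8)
The plan is to peel off the factors with Corollary~\ref{t8} and then handle the two kinds of factor separately. Write $X\simeq C_{1}\times\cdots\times C_{m}\times A_{1}\times\cdots\times A_{n}$ with the $C_{i}$ curves and the $A_{j}$ abelian varieties over $\mathbb{F}{}$. Corollary~\ref{t8} is stated for two factors, but inducting on the number of factors it shows that $T^{1}(X,\ell)$ is equivalent to $T^{1}(C_{i},\ell)$ for all $i$ together with $T^{1}(A_{j},\ell)$ for all $j$. So it is enough to prove $T^{1}(C,\ell)$ for a curve $C$ and $T^{1}(A,\ell)$ for an abelian variety $A$, for every $\ell$. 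For a curve this is trivial: $H^{2}(C,\mathbb{Q}{}_{\ell}(1))$ is spanned by the classes of closed points, which are algebraic, so $\mathcal{T}{}_{\ell}^{1}(C)=H^{2}(C,\mathbb{Q}{}_{\ell}(1))$ is spanned by algebraic classes.

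For an abelian variety $A$ the content is a reinterpretation of Theorem~\ref{t7}. Fix a model $A_{1}$ of $A$ over a finite subfield $k_{1}\subset\mathbb{F}{}$ and set $\Gamma_{1}=\Gal(\mathbb{F}{}/k_{1})$. First recall the classical fact that $L\mapsto\phi_{L}$ induces an isomorphism $\mathrm{NS}(A_{1})\otimes\mathbb{Q}{}\simeq\Hom^{\mathrm{sym}}(A_{1},A_{1}^{\vee})\otimes\mathbb{Q}{}$ onto the symmetric homomorphisms $A_{1}\to A_{1}^{\vee}$. Next, Theorem~\ref{t7} applied to the pair $(A,A^{\vee})$ gives $\Hom(A_{1},A_{1}^{\vee})\otimes\mathbb{Q}{}_{\ell}\simeq\Hom_{\mathbb{Q}{}_{\ell}[\Gamma_{1}]}(V_{\ell}A,V_{\ell}A^{\vee})$; using the Weil pairing to identify $V_{\ell}A^{\vee}$ with $\Hom(V_{\ell}A,\mathbb{Q}{}_{\ell}(1))$, the target becomes the space of $\Gamma_{1}$-invariant bilinear forms $V_{\ell}A\times V_{\ell}A\to\mathbb{Q}{}_{\ell}(1)$. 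Since that pairing is alternating, the symmetric homomorphisms correspond exactly to the alternating such forms, i.e. to $\bigl(\bigwedge^{2}H^{1}(A,\mathbb{Q}{}_{\ell})\bigr)(1)^{\Gamma_{1}}$, which by the K\"{u}nneth formula is $H^{2}(A,\mathbb{Q}{}_{\ell}(1))^{\Gamma_{1}}$. Combining these identifications yields an isomorphism $\mathrm{NS}(A_{1})\otimes\mathbb{Q}{}_{\ell}\simeq H^{2}(A,\mathbb{Q}{}_{\ell}(1))^{\Gamma_{1}}$ induced by the cycle map; taking the union over all models $A_{1}/k_{1}$ (equivalently, over all open subgroups of $\Gal(\mathbb{F}{}/\mathbb{F}{}_{p})$) gives $\mathrm{NS}(A_{\mathbb{F}{}})\otimes\mathbb{Q}{}_{\ell}\simeq\mathcal{T}{}_{\ell}^{1}(A)$, which is precisely $T^{1}(A,\ell)$.

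All of the real substance sits in Theorem~\ref{t7}; the rest is formal. The one step that demands care is checking that the composite of the three identifications used for abelian varieties --- $\mathrm{NS}\otimes\mathbb{Q}{}\simeq\Hom^{\mathrm{sym}}(A,A^{\vee})\otimes\mathbb{Q}{}$, the Weil pairing identification $V_{\ell}A^{\vee}\simeq\Hom(V_{\ell}A,\mathbb{Q}{}_{\ell}(1))$, and the K\"{u}nneth isomorphism --- agrees, up to signs, with the cycle class map; once the alternating-versus-symmetric bookkeeping is pinned down, passing to the limit over finite subfields is immediate and the reduction is complete.
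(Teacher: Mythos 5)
Your proof is correct and follows essentially the same route as the paper: reduce to curves and abelian varieties via Corollary \ref{t8}, dispose of curves trivially, and deduce $T^{1}$ for abelian varieties from Theorem \ref{t7} via the identification of $\mathrm{NS}(A)\otimes\mathbb{Q}{}$ with the symmetric homomorphisms. The only cosmetic difference is that the paper composes with a fixed polarization $\lambda^{-1}$ so as to work with the Rosati involution inside $\End^{0}(A)$, whereas you stay in $\Hom(A,A^{\vee})$ and use the Weil pairing to match symmetric homomorphisms with alternating forms, i.e.\ with $H^{2}(A,\mathbb{Q}{}_{\ell}(1))^{\Gal(\mathbb{F}{}/k_{1})}$.
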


\begin{proof}
Let $A$ be an abelian variety over $\mathbb{F}{}$. Choose a polarization
$\lambda\colon A\rightarrow A^{\vee}$ of $A$, and let $^{\dagger}$ be the
Rosati involution on $\End^{0}(A)$ defined by $\lambda$. The map
$D\mapsto\lambda^{-1}\circ\lambda_{D}$ defines an isomorphism%
\[
NS(A)\otimes\mathbb{Q}{}\simeq\{\alpha\in\End^{0}(A)\mid\alpha^{\dagger
}=\alpha\}.
\]
Similarly,%
\[
\mathcal{T}{}_{\ell}^{1}(A)\simeq\{\alpha\in\End_{\mathbb{Q}{}_{\ell}}%
(V_{\ell}A)\mid\alpha^{\dagger}=\alpha\text{ and }\alpha\pi=\pi\alpha\text{
for some Frobenius map }\pi\},
\]
and so $T^{1}$ for abelian varieties follows from Theorem \ref{t7}. Since
$T^{1}$ is obvious for curves, the general statement follows from Corollary
\ref{t8}.
\end{proof}

As $E^{1}(X,\ell)$ is true for all varieties $X$, the equivalent statements in
Theorem \ref{t2} hold for products of curves and abelian varieties in the case
$r=1$. According to some more folklore (\cite{tate1994}, 5.2), $T^{1}(X)$ and
$E^{1}(X)$ hold for any variety $X$ for which there exists a dominant rational
map $Y\rightarrow X$ with $Y$ a product of curves and abelian varieties.

\begin{aside}
\label{t9a}The reader will have noted the similarity of (\ref{e2}) and
(\ref{e3}). Tate (1994) describes how he was led to his conjecture partly by
his belief that (\ref{e3}) was true. Today, one would say that if (\ref{e3})
is true, then so must (\ref{e2}) because \textquotedblleft everything that's
true for abelian varieties is true for motives\textquotedblright%
.\footnote{This reasoning is circular: what we hope to be true for motives is
partly based on our hope that the Tate and Hodge conjectures are true.}
However, when Tate was thinking about these things, motives didn't exist.
Apparently, the first text in which the notion of a \textit{motif }appears is
Grothendieck's letter to Serre of August 16, 1964 (\cite{grothendieckS}, p173, p276).
\end{aside}

\begin{aside}
\label{t9b}Theorem \ref{t7} has been extended to abelian varieties over fields
finitely generated over the prime field by Zarhin and Faltings
(\cite{zarhin1974i, zarhin1974a, faltings1983, faltingsW1984}).
\end{aside}

\subsection{Abelian varieties with no exotic Tate classes}

\begin{theorem}
\label{t10}The Tate conjecture $T(A)$ holds for any abelian variety $A$ such
that, for some $\ell$, the $\mathbb{Q}{}_{\ell}$-algebra $\mathcal{T}{}_{\ell
}^{\ast}(A)$ is generated by $\mathcal{T}{}_{\ell}^{1}(A)$; in fact, the
equivalent statements of (\ref{t2}) hold for such an $A$ and all $r\geq0$.
\end{theorem}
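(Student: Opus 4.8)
The plan is to bootstrap from the known case $r=1$ (Theorem \ref{t7} / Corollary \ref{t9}) using the hypothesis that $\mathcal{T}{}_{\ell}^{\ast}(A)$ is generated in degree $1$, and to verify enough of the conditions in Theorem \ref{t2} to conclude. First I would observe that since $\mathcal{A}{}_{\ell}^{1}(A)\otimes_{\mathbb{Q}{}}\mathbb{Q}{}_{\ell}\simeq\mathcal{T}{}_{\ell}^{1}(A)$ (this is part (c) of Theorem \ref{t2} applied with $r=1$, which holds for abelian varieties by Corollary \ref{t9} together with $E^{1}$ being known), the subalgebra of $H^{2\ast}(A,\mathbb{Q}{}_{\ell}(\ast))$ generated by $\mathcal{A}{}_{\ell}^{1}(A)$ is a $\mathbb{Q}{}$-structure on the subalgebra generated by $\mathcal{T}{}_{\ell}^{1}(A)$, which by hypothesis is all of $\mathcal{T}{}_{\ell}^{\ast}(A)$. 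Since products of algebraic classes are algebraic, this shows $\mathcal{A}{}_{\ell}^{r}(A)\otimes_{\mathbb{Q}{}}\mathbb{Q}{}_{\ell}\simeq\mathcal{T}{}_{\ell}^{r}(A)$ for all $r$, and in particular that $\mathcal{T}{}_{\ell}^{r}(A)$ is spanned by algebraic classes, i.e.\ $T^{r}(A,\ell)$ holds for this $\ell$ and all $r$.

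Next I would upgrade from "this $\ell$" to "all the equivalent statements of Theorem \ref{t2}." The missing ingredient is $S^{r}(A,\ell)$ (and then $E^{r}$, $T^{d-r}$, and independence of $\ell$ come along via Theorem \ref{t2}). But $S^{r}(A,\ell)$ — semisimplicity of every Frobenius map on $H^{2r}(A,\mathbb{Q}{}_{\ell}(r))_{1}$ — is a consequence of the fact, recorded just after Conjecture $S^{r}(X,\ell)$, that Frobenius acts semisimply on all of $H^{\ast}(A,\mathbb{Q}{}_{\ell})$ for $A$ an abelian variety (Weil, via $H^{i}\simeq\bigwedge^{i}H^{1}$). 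So $S^{r}(A,\ell)$ holds unconditionally. Having $T^{r}$, $S^{r}$, and $T^{d-r}$ for a single $\ell$, part (2) of Theorem \ref{t2} gives the equivalence with part (1), hence with part (3), which yields all the listed statements for all $\ell$ and all $r\geq 0$; applying this for every $r$ gives $T(A)$.

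The one point that needs slightly more care — and which I expect to be the main obstacle — is the first step: knowing that $\mathcal{A}{}_{\ell}^{r}(A)$ is genuinely a $\mathbb{Q}{}$-structure, not merely that $\mathcal{T}{}_{\ell}^{r}(A)$ is $\mathbb{Q}{}_{\ell}$-spanned by algebraic classes. These are equivalent given $T^{r}$, but to invoke part (c) of Theorem \ref{t2} for $r=1$ one must confirm that abelian varieties satisfy its hypotheses, i.e.\ part (a) for $r=1$; this is exactly Corollary \ref{t9} combined with the general truth of $E^{1}$. With that in hand, the multiplicativity of the $\mathbb{Q}{}$-structure under cup product is automatic, because $\mathcal{A}{}_{\ell}^{\ast}(A)$ is by definition the image of the (ring) cycle map, and the graded pieces of a $\mathbb{Q}{}$-structure on a graded $\mathbb{Q}{}_{\ell}$-algebra that is generated in degree $1$ are again $\mathbb{Q}{}$-structures — the surjection $\Sym^{r}\mathcal{A}{}_{\ell}^{1}(A)\twoheadrightarrow\mathcal{A}{}_{\ell}^{r}(A)$ base-changes to the surjection $\Sym^{r}\mathcal{T}{}_{\ell}^{1}(A)\twoheadrightarrow\mathcal{T}{}_{\ell}^{r}(A)$, forcing equality of $\mathbb{Q}{}_{\ell}$-dimensions. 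Everything else is a direct appeal to the folklore Theorems \ref{t2} and \ref{t7}.
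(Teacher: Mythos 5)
Your proposal is correct and follows essentially the same route as the paper: $\mathcal{T}_{\ell}^{1}(A)$ is spanned by algebraic classes (Corollary \ref{t9}), hence so is the $\mathbb{Q}_{\ell}$-algebra it generates, which by hypothesis is all of $\mathcal{T}_{\ell}^{\ast}(A)$, giving $T^{r}(A,\ell)$ for all $r$, and then the known semisimplicity $S(A)$ (Weil) lets one invoke statement (b) of Theorem \ref{t2}. Your extra detour through the $\mathbb{Q}$-structure (statement (c) for $r=1$ and the $\Sym^{r}$ argument) is unnecessary for this conclusion --- and the asserted surjection $\Sym^{r}\mathcal{A}_{\ell}^{1}(A)\twoheadrightarrow\mathcal{A}_{\ell}^{r}(A)$ is not automatic, since $\mathcal{A}_{\ell}^{r}(A)$ may contain algebraic classes that are not products of divisor classes --- but since your final appeal is to part (b) of Theorem \ref{t2}, which needs only the spanning statement, the argument stands.
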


\begin{proof}
If the $\mathbb{Q}{}_{\ell}$-algebra $\mathcal{T}{}_{\ell}^{\ast}(A)$ is
generated by $\mathcal{T}{}_{\ell}^{1}(A)$, then, because the latter is
spanned by algebraic classes (\ref{t9}), so is the former. Thus $T^{r}%
(A,\ell)$ holds for all $r$, and as $S(A)$ is known, this implies that the
equivalent statements of (\ref{t2}) hold for $A$.
\end{proof}

For an abelian variety $A$, let $\mathcal{L}_{\ell}^{\ast}(A)$ be the
$\mathbb{Q}{}$-subalgebra of $H^{2\ast}(A,\mathbb{Q}{}_{\ell}(\ast))$
generated by the divisor classes. Then $\mathcal{L}_{\ell}^{\ast}%
(A)\cdot\mathbb{Q}{}_{\ell}\subset\mathcal{T}{}_{\ell}^{\ast}(A)$. The
elements of $\mathcal{L}_{\ell}^{\ast}(A)$ are called the \emph{Lefschetz
classes }on $A$, and the Tate classes not in $\mathcal{L}{}_{\ell}^{\ast}(A)$
are said to be \emph{exotic}.

An abelian variety $A$ is said to have \emph{sufficiently many endomorphisms
}if $\End^{0}(A)$ contains an \'{e}tale $\mathbb{Q}{}$-subalgebra of degree
$2\dim A$ over $\mathbb{Q}{}$. Tate's theorem (\ref{t7}) implies that every
abelian variety over $\mathbb{F}{}$ has sufficiently many endomorphisms.

Let $A$ be an abelian variety with sufficiently many endomorphisms over an
algebraically closed field $k$, and let $C(A)$ be the centre of $\End^{0}(A)$.
The Rosati involution $^{\dagger}$ defined by any polarization of $A$
stabilizes $C(A)$, and its restriction to $C(A)$ is independent of the choice
of the polarization. Define $L(A)$ to be the algebraic group over
$\mathbb{Q}{}$ such that, for any commutative $\mathbb{Q}{}$-algebra $R$,%
\[
L(A)(R)=\{a\in C(A)\otimes R\mid aa^{\dagger}\in R^{\times}\}.
\]
It is a group of multiplicative type (not necessarily connected), which acts
in a natural way on the cohomology groups $H^{2\ast}(A^{n},\mathbb{Q}{}_{\ell
}(\ast)){}$ for all $n$. It is known that the subspace fixed by $L(A)$
consists of the Lefschetz classes,%
\begin{equation}
H^{2\ast}(A^{n},\mathbb{Q}{}_{\ell}(\ast))^{L(A)}=\mathcal{L}{}_{\ell}^{\ast
}(A^{n})\cdot\mathbb{Q}{}_{\ell}\text{, all }n\text{ and }\ell, \label{e4}%
\end{equation}
(see \cite{milne1999lc}). Let $\pi$ be a Frobenius endomorphism of $A$. Some
power $\pi^{N}$ of $\pi$ lies in $C(A)$, hence in $L(A)(\mathbb{Q}{})$, and
(\ref{e4}) shows that no power of $A$ has an exotic Tate class if and only if
$\pi^{N}$ is Zariski dense in $L(A)$. This gives the following explicit criterion:

\bquote

\begin{plain}
\label{t10c}Let $A$ be a simple abelian variety over $\mathbb{F}{}$, let
$\pi_{A}$ be a $q$-Frobenius endomorphism of $A$ lying in $C(A)$, and let
$(\pi_{i})_{1\leq i\leq2s}$ be the roots in $\mathbb{C}{}$ of the minimum
polynomial of $\pi_{A}$, numbered so that $\pi_{i}\pi_{i+s}=q$. Then no power
of $A$ has an exotic Tate class (and so the Tate conjecture holds for all
powers of $A$) if and only if $\{\pi_{1},\ldots,\pi_{s},q\}$ is a
$\mathbb{Z}{}$-linearly independent in $\mathbb{C}{}^{\times}$ (i.e., $\pi
_{1}^{m_{1}}\cdots\pi_{s}^{m_{s}}=q^{m}$, $m_{i},m\in\mathbb{Z}$, implies
$m_{1}=\cdots=m_{s}=0=m$).
\end{plain}

\equote\noindent Kowalski (2005, 2.2(2), 2.7)\nocite{kowalski2005} verifies
the criterion for any simple ordinary abelian variety $A$ such that the Galois
group of $\mathbb{Q}{}[\pi_{1},\ldots,\pi_{2g}]$ over $\mathbb{Q}{}$ is the
full group of permutations of $\{\pi_{1},\ldots,\pi_{2g}\}$ preserving the
relations $\pi_{i}\pi_{i+g}=q$.

More generally, let $P(A)$ be the smallest algebraic subgroup of $L(A)$
containing a Frobenius element. Then no power of $A$ has an exotic Tate class
if and only if $P(A)=L(A)$. Spiess (1999)\nocite{spiess1999} proves this for
products of elliptic curves, and Zarhin (1991)\nocite{zarhin1991} and Lenstra
and Zarhin (1993)\nocite{lenstraZ1993} prove it for certain abelian varieties.
See also \cite{milne2001}, A7.

\begin{remark}
\label{t10d}Let $K$ be a CM subfield of $\mathbb{C}{}$, finite and Galois over
$\mathbb{Q}{}$, and let $G=\Gal(K/\mathbb{Q}{})$. We say that an abelian
variety $A$ is \emph{split} by $K$ if $\End^{0}(A)$ is \emph{split} by $K$,
i.e., $\End^{0}(A)\otimes_{\mathbb{Q}{}}K$ is isomorphic to a product of
matrix algebras over $\mathbb{Q}{}$.

Let $A$ be a simple abelian variety over $\mathbb{F}{}$, and let $\pi$ be a
$q$-Frobenius endomorphism for $A$. If $A$ is split by $K$, then, for any
$p$-adic prime $w$ of $K$
\[
f_{A}(w)\overset{\text{{\tiny def}}}{=}\frac{\ord_{w}(\pi)}{\ord_{w}(q)}%
[K_{w}\colon\mathbb{Q}{}_{p}]
\]
lies in $\mathbb{Z}{}$ (apply \cite{tate1966e}, p142). Clearly, $f_{A}(w)$ is
independent of the choice of $\pi$, and the equality $\pi\cdot\iota\pi=q$
implies that $f_{A}(w)+f_{A}(\iota w)=$ $[K_{w}\colon\mathbb{Q}{}_{p}]$. Let
$W$ be the set of $p$-adic primes of $K$, and let $d$ be the local degree
$[K_{w}\colon\mathbb{Q}{}_{p}]$. The map $A\mapsto f_{A}$ defines a bijection
from the set of isogeny classes of simple abelian varieties over $\mathbb{F}%
{}$ split by $K$ to the set%
\begin{equation}
\{f\colon W\rightarrow\mathbb{Z}{}\mid\text{ }f+\iota f=d,\quad\text{ }0\leq
f(w)\leq d\text{ all }w\}.\label{e5}%
\end{equation}
The character groups of $P(A)$ and $L(A)$ have a simple description in terms
of $f_{A}$, and so computing the dimensions of $P(A)$ and $L(A)$ is only an
exercise in linear algebra (cf. \cite{wei1993}, Part I; \cite{milne2001}, A7).
\end{remark}

\subsection{Abelian varieties with exotic Tate classes}

Typically, some power of a simple abelian variety over $\mathbb{F}$ will have
exotic Tate classes.

\begin{proposition}
\label{t11}Let $K$ be a CM-subfield of $\mathbb{C}{}$, finite and Galois over
$\mathbb{Q}{}$, with Galois group $G$.

\begin{enumerate}
\item There exists a CM-field $K_{0}$ such that, if $K\supset K_{0}$, then 

\begin{enumerate}
\item the decomposition groups $D_{w}$ in $G$ of the $p$-adic primes $w$ of
$K$ are not normal in $G$,

\item $\iota$ acts without fixed points on the set $W$ of $p$-adic primes of
$K$.
\end{enumerate}

\item Assume $K\supset K_{0}$, so that $D_{w}\neq D_{w^{\prime}}$ for some
$w,w^{\prime}\in W$. Let $A$ be the simple abelian variety over $\mathbb{F}{}$
corresponding (as in \ref{t10d}) to a function $f\colon W\rightarrow
\mathbb{Z}{}$ such that $f(w)\neq f(w^{\prime})$ and neither $f(w)$ nor
$f(w^{\prime})$ lie in $f(W\smallsetminus\{w,w^{\prime}\})$. Then some power
of $A$ supports an exotic Tate class.
\end{enumerate}
\end{proposition}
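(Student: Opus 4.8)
The plan is to prove (a) by a descent remark plus an explicit construction, and to prove (b) by showing $\dim P(A)<\dim L(A)$, so that $P(A)\neq L(A)$ and hence, by (\ref{e4}) and the discussion following it, some power of $A$ carries an exotic Tate class. For (a), I would first note that (a)(i) and (a)(ii) are inherited by overfields: if $K\supseteq K_0$ are CM fields finite and Galois over $\mathbb{Q}$, restriction gives a surjection $\Gal(K/\mathbb{Q})\twoheadrightarrow\Gal(K_0/\mathbb{Q})$ carrying a decomposition group $D_w$ (for $w\mid p$ in $K$) onto the decomposition group of $w|_{K_0}$ and carrying $\iota$ to $\iota$; since the image of a normal subgroup under a surjection is normal, non-normality of the $D_w$ and the relation $\iota\notin D_w$ descend from $K_0$ to $K$. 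So it suffices to construct one suitable $K_0$. Take a monic cubic $g\in\mathbb{Z}[x]$ with three real roots, with non-square positive discriminant prime to $p$, and with $g\bmod p$ the product of a linear and an irreducible quadratic factor (such $g$ exist: prescribe the reductions of $g$ modulo $p$ and an auxiliary prime by the Chinese Remainder Theorem, and fix the real shape). Its splitting field $L$ is totally real with $\Gal(L/\mathbb{Q})\cong S_3$, and $p$ is unramified in $L$ with decomposition group generated by a transposition. Put $K_0=L(\sqrt{-m})$ with $m\in\mathbb{Z}_{>0}$ chosen so that $-m$ is a square in $\mathbb{Q}_p$; then $K_0$ is CM with maximal totally real subfield $L$, and (since $L$ is totally real, $L\cap\mathbb{Q}(\sqrt{-m})=\mathbb{Q}$) $\Gal(K_0/\mathbb{Q})\cong S_3\times\mathbb{Z}/2\mathbb{Z}$ with $\iota$ the generator of the second factor; $p$ is unramified in $K_0$, and a $p$-adic decomposition group of $K_0$ is $\langle\tau\rangle\times\{1\}$ for a transposition $\tau$, which is not normal in $S_3\times\mathbb{Z}/2\mathbb{Z}$ and does not contain the central element $\iota$ (hence neither do its conjugates). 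This gives (a).

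For (b), write $G=\Gal(K/\mathbb{Q})$, $E=C(A)=\mathbb{Q}(\pi_A)\subseteq K$ (the last inclusion because $A$ is split by $K$), $H=\Gal(K/E)$, and let $W$ be the set of $p$-adic primes of $K$; $G$ acts transitively on $W$, so $\#W=[G:D_w]$ while $[E:\mathbb{Q}]=[G:H]$. The one new input is the elementary observation that if $f_A$ takes a value only at a single prime $v\in W$, then $H\subseteq D_v$. Indeed $f_A$ is constant on the $H$-orbits of $W$: for $h\in H$ one has $\ord_{hv}(\pi_A)=\ord_v(\pi_A)$ since $\pi_A\in K^H$, $\ord_{hv}(q)=\ord_v(q)$, and $[K_{hv}:\mathbb{Q}_p]=[K_v:\mathbb{Q}_p]$, so $f_A(hv)=f_A(v)$; hence the fibres of $f_A$ are unions of $H$-orbits, and a singleton fibre $\{v\}$ must be $H$-stable, i.e. $H\subseteq\Stab_G(v)=D_v$. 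The hypothesis on $f=f_A$ says exactly that $f$ attains $f(w)$ only at $w$ and $f(w')$ only at $w'$, so $H\subseteq D_w\cap D_{w'}$; as $D_w\neq D_{w'}$ are conjugate (hence of the same order), $D_w\cap D_{w'}\subsetneq D_w$, so $H\subsetneq D_w$ and therefore
\[
\#W=[G:D_w]<[G:H]=[E:\mathbb{Q}].
\]

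Finally I would compare dimensions. Since $f$ takes at least two values, $\pi_A\neq\bar\pi_A$ and $E$ is a CM field; $L(A)$ is then, up to identity component, the preimage of $\mathbb{G}_m$ in $\Res_{E/\mathbb{Q}}\mathbb{G}_m$ under the norm $a\mapsto a\bar a$ into $\Res_{E^+/\mathbb{Q}}\mathbb{G}_m$, which is surjective with kernel of dimension $\tfrac{1}{2}[E:\mathbb{Q}]$, so $\dim L(A)=1+\tfrac{1}{2}[E:\mathbb{Q}]$. And $P(A)$ is, up to identity component, the Zariski closure of the group generated by a suitable power of $\pi_A$; using the description of its character group in terms of $f_A$ promised in \ref{t10d} (cf. \cite{milne2001}, A7, and \cite{wei1993}), one computes that $\dim P(A)$ equals the dimension of the $\mathbb{Q}[G]$-submodule of $\mathbb{Q}[W]$ generated by $f_A$, and since $f_A+\iota f_A$ is the constant function $d$ this submodule lies in $\mathbb{Q}\cdot\mathbf{1}_{W}\oplus\mathbb{Q}[W]^{\iota=-1}$, whence $\dim P(A)\leq1+\tfrac{1}{2}\#W$. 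Combining with the displayed inequality,
\[
\dim P(A)\leq1+\tfrac{1}{2}\#W<1+\tfrac{1}{2}[E:\mathbb{Q}]=\dim L(A),
\]
so $P(A)\neq L(A)$, and some power of $A$ supports an exotic Tate class. (Such $f$ do exist once $K$ is large enough that $d=[K_w:\mathbb{Q}_p]\geq4$: take $f$ equal to $0$ at $w$, $d$ at $\iota w$, $1$ at $w'$, $d-1$ at $\iota w'$, and intermediate values on the remaining $\iota$-pairs.) The step I expect to be the real obstacle is this last dimension bookkeeping — turning the ``simple description'' of the character groups of $P(A)$ and $L(A)$ in terms of $f_A$ (the ``exercise in linear algebra'' of \ref{t10d}) into the two bounds above; the conceptual heart of the argument, by contrast, is the orbit observation that a value of $f_A$ attained at a single prime $v$ forces $H\subseteq D_v$, which together with $D_w\neq D_{w'}$ yields $\#W<[E:\mathbb{Q}]$.
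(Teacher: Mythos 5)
Your proof is correct, and for part (b) it follows the same strategy as the paper: show $\dim P(A)<\dim L(A)$, conclude $P(A)\neq L(A)$, and invoke the criterion that no power of $A$ has an exotic Tate class if and only if $P(A)=L(A)$. The differences are in the supporting steps. For the bound $\dim P(A)\le t+1$ (where $|W|=2t$) the paper simply cites Wei 1993, 1.4.4, whereas you rederive it from the character-group description; your computation is sound, since $f_A+\iota f_A=d$ does force the $\mathbb{Q}[G]$-submodule generated by $f_A$ into $\mathbb{Q}\cdot\mathbf{1}\oplus\mathbb{Q}[W]^{\iota=-1}$, of dimension $1+t$. For the inequality $\dim L(A)>t+1$ the paper takes $H=\{g\in G\mid f(gw)=f(w)\text{ for all }w\}$ and uses $C(A)\approx K^{H}$, while you take $H=\Gal(K/C(A))$ and prove the needed inclusion $H\subseteq D_w\cap D_{w'}$ directly from $\pi_A\in K^{H}$ (constancy of $f_A$ on $H$-orbits plus the singleton-fibre hypothesis); this is a slightly more self-contained route to the same inequality, since it only uses the easy inclusion $\Gal(K/C(A))\subseteq\Stab(f)$ rather than the identification of $C(A)$ with the fixed field of the full stabilizer of $f$. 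For part (a), the paper quotes Wei's existence of a totally real field $F$ with group $S_5$ and at least three $p$-adic primes and sets $K_0=F\cdot Q$ with $p$ split in the imaginary quadratic field $Q$; you instead build an explicit degree-$12$ field $K_0=L(\sqrt{-m})$ with group $S_3\times\mathbb{Z}/2$, decomposition group generated by a transposition, and $\iota$ central and outside all decomposition groups, and you make explicit the heredity argument (images of decomposition groups and of $\iota$ under $\Gal(K/\mathbb{Q})\twoheadrightarrow\Gal(K_0/\mathbb{Q})$) that the paper leaves implicit; both constructions work. One small remark: for your $K_0$ itself the local degree is $d=2$, so no function $f$ with the required properties exists until $K$ is enlarged (as you note, $d\ge4$ suffices); this is harmless because (b) is conditional on the given $f$, but it explains why a larger $K_0$, as in the paper, is convenient for the counting remark following the proposition.
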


\begin{proof}
(a) There exists a totally real field $F$ with Galois group $S_{5}$ over
$\mathbb{Q}{}$ having at least three $p$-adic primes (\cite{wei1993}, 1.6.9),
we can take $K_{0}=F\cdot Q$ where $Q$ is any quadratic imaginary field in
which $p$ splits.

(b) We have%
\[
\dim P(A)\leq t+1,\quad\text{where }|W|=2t,
\]
(\cite{wei1993}, 1.4.4). Let
\[
H=\{g\in G\mid f(gw)=f(w)\text{ for all }w\in W\}.
\]
Then $C(A)\approx K^{H}$, and so%
\[
\dim L(A)=\tfrac{1}{2}(G\colon H)+1.
\]
The conditions on $f$ imply that $H\subset D_{w}\cap D_{w^{\prime}}$, which is
properly contained in $D_{w}$. As $2t=(G\colon D_{w})$, we see that $\dim
L(A)>\dim P(A)$, and so $L(A)\neq P(A)$.
\end{proof}

\noindent The set (\ref{e5}) has $t^{d}$ elements, of which $t(t-1)(t-2)^{d-2}%
$ satisfy the conditions of (\ref{t11}b). As we let $K$ grow, the ratio
$t(t-1)(t-2)^{d-2}/t^{d}\ $tends to $1$, which justifies the statement
preceding the proposition$.$

\begin{theorem}
\label{t12}There exists a family of abelian varieties $A$ over $\mathbb{F}{}$
for which the Tate conjecture $T(A)$ is true and $\mathcal{T}{}_{\ell}^{\ast
}(A)$ is not generated by $\mathcal{T}{}_{\ell}^{1}(A)$.
\end{theorem}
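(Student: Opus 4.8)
The plan is to obtain the family by combining two largely independent ingredients: Proposition~\ref{t11}, which manufactures abelian varieties over $\mathbb{F}$ some power of which carries an exotic Tate class, and a mechanism for checking the full Tate conjecture on abelian varieties over $\mathbb{F}$ by specializing known algebraic cycles from characteristic zero.

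First I would settle the part of the statement about generation in degree one, which is formal. Fix a CM-field $K$, finite and Galois over $\mathbb{Q}$ with $K\supset K_{0}$, and a function $f\colon W\to\mathbb{Z}$ satisfying the hypotheses of Proposition~\ref{t11}(b); let $A$ be the associated simple abelian variety over $\mathbb{F}$, so that some power $A^{N}$ supports a Tate class not lying in $\mathcal{L}^{\ast}_{\ell}(A^{N})\cdot\mathbb{Q}_{\ell}$. Every degree-one Tate class on an abelian variety is algebraic, hence a divisor class (Corollary~\ref{t9}, together with the truth of $E^{1}$), so $\mathcal{T}^{1}_{\ell}(A^{N})$ coincides with the $\mathbb{Q}_{\ell}$-span of $\mathrm{NS}(A^{N})$; since $\mathcal{L}^{\ast}_{\ell}$ is by definition generated in degree one, the subalgebra of $\mathcal{T}^{\ast}_{\ell}(A^{N})$ generated by $\mathcal{T}^{1}_{\ell}(A^{N})$ is exactly $\mathcal{L}^{\ast}_{\ell}(A^{N})\cdot\mathbb{Q}_{\ell}$, which misses the exotic class. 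So each $B:=A^{N}$ produced this way already has the second required property, and, by the linear algebra of Remark~\ref{t10d} (computing the character groups of $P(A)$ and $L(A)$ from $f_{A}$), there are infinitely many such $B$, explicitly describable.

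The real work is to arrange, at the same time, that $T(B)$ holds. Here I would not take $B$ arbitrary among the above examples, but restrict to those that lift to a CM abelian variety $\tilde{B}$ over a number field, with good reduction giving back $B$ up to isogeny, and chosen so that the Hodge conjecture is a theorem for $\tilde{B}$ and its powers --- for instance, by arranging that the Hodge ring of each $\tilde{B}^{m}$ be generated by divisor classes together with Weil classes of a discriminant for which algebraicity is known; equivalently, one invokes the principle that the Hodge conjecture for CM abelian varieties implies the Tate conjecture for abelian varieties over $\mathbb{F}$ and stays inside the established part of the former. One then transports cycles: a Hodge class on $\tilde{B}^{m}_{\mathbb{C}}$ is defined over a number field, its reduction is a Tate class on $B^{m}$, and the cycle-class maps are compatible with this specialization; combined with Weil's semisimplicity of the Frobenius on abelian varieties (so that Theorem~\ref{t2} applies), realizing every Tate class in this manner yields $T^{r}(B,\ell)$ for all $r$ and $\ell$, together with the remaining equivalent statements of Theorem~\ref{t2}.

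The hard part will be exactly this matching. One must choose the CM data so that the exotic Tate classes produced in the first step are the reductions of Hodge classes whose algebraicity has actually been proved --- not themselves conjectural --- and one must verify the compatibilities of reduction (of cycles, of endomorphisms, and of the action of a Frobenius with that of a decomposition group at the prime of reduction) that make the specialization argument legitimate. The combinatorial bookkeeping guaranteeing both the presence of exotic classes and their tractable type is routine; the algebraicity of those classes is the genuine input, and one could alternatively supply it by realizing $B$, up to isogeny, as a factor of a product of Jacobians of curves (of Fermat or Hermitian type) for which the Tate conjecture is accessible.
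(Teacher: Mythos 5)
Your overall strategy is the right one, and in fact it mirrors the route of the paper: the paper's ``proof'' of Theorem \ref{t12} is simply a citation of \cite{milne2001}, where the family is obtained exactly as you suggest --- abelian varieties over $\mathbb{F}$ whose exotic Tate classes are reductions of split Weil classes whose algebraicity is known in characteristic zero by \cite{schoen1988, schoen1998}, combined with the Tannakian/density machinery (Theorem \ref{t15}, criteria \ref{t16} and \ref{t19a}) to conclude $T(A)$. Your first paragraph (exotic class $\Rightarrow$ $\mathcal{T}_{\ell}^{\ast}$ not generated by $\mathcal{T}_{\ell}^{1}$, via Theorem \ref{t7}/Corollary \ref{t9} and the definition of $\mathcal{L}_{\ell}^{\ast}$) is correct and formal, as you say.

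The genuine gap is in the second step, and it is precisely the substance of the theorem. Specializing known-algebraic Hodge classes from a CM lift only produces algebraic Tate classes; it gives a lower bound on the algebraic part of $\mathcal{T}_{\ell}^{\ast}(B^{n})$, never the needed upper bound on the Tate classes themselves. ``Realizing every Tate class in this manner'' is not something you can get by compatibility of cycle maps with reduction: you must prove that the divisor classes together with the specialized Weil classes exhaust the Tate classes on all powers of $B$, i.e., (by \ref{t16}/\ref{t19a}, after securing $E(B^{n},\ell)$ for a suitable $\ell$ via Corollary \ref{t19}) that some Frobenius is Zariski dense in the algebraic subgroup of $\GL(H^{\ast}(B,\mathbb{Q}_{\ell}))\times\GL(\mathbb{Q}_{\ell}(1))$ fixing exactly those classes, or equivalently that the group fixing them coincides with $P(B)$. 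This forces a delicate choice of the CM data: the combinatorics of $f_{A}$ must simultaneously make $P(B)\neq L(B)$ (so exotic classes exist, as in Proposition \ref{t11}) and make the exotic part of the fixed space of $P(B)$ spanned by Weil classes of the specific Schoen type, with the group computation carried out; this is what \cite{milne2001} actually does, and it is also the pattern of the paper's proof of Theorem \ref{t29} (where the identification of $\MT(A)\cap L(A_{0})$ with $P(A_{0})$ is a cited nontrivial theorem, not a formality). Since you explicitly defer this ``matching'' and the density verification, your text is a correct plan of attack rather than a proof; the missing group-theoretic identification is the essential content, not routine bookkeeping.
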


\begin{proof}
See \cite{milne2001} (the proof makes use of \cite{schoen1988, schoen1998}).
\end{proof}

\section{$K3$ surfaces}

The next theorem was proved by Artin and Swinnerton-Dyer
(1973).\nocite{artinS1973}

\begin{theorem}
\label{t13}The Tate conjecture holds for $K3$ surfaces over $\mathbb{F}{}$
that admit a pencil of elliptic curves.
\end{theorem}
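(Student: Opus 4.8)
The plan is to reduce the Tate conjecture for a $K3$ surface $X$ over $\mathbb{F}$ carrying an elliptic pencil to the Tate conjecture for divisors on a suitable abelian variety, which is known by Theorem~\ref{t7}. Since $X$ is a surface, the only interesting case of $T(X,\ell)$ is $r=1$, i.e.\ $T^{1}(X,\ell)$ (the cases $r=0$ and $r=2$ being trivial). So everything comes down to showing that $\mathcal{T}_{\ell}^{1}(X)$ is spanned by divisor classes.

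First I would fix a model $X_{1}$ over a finite field $k_{1}=\mathbb{F}_{q}$ carrying the elliptic pencil, so that after a finite base change there is a morphism $f\colon X_{1}\to \mathbb{P}^{1}$ whose generic fibre $E/\mathbb{F}_{q}(t)$ is an elliptic curve (passing to a multisection or performing a base change on $\mathbb{P}^{1}$ one may also arrange a section, i.e.\ reduce to the Jacobian case). The generic fibre $E$ is an elliptic curve over the global function field $K=\mathbb{F}_{q}(t)$, and the key input is the theorem of Artin--Tate relating the Tate conjecture for $X_{1}$ (equivalently, the statement that the order of the pole of $Z(X_{1},t)$ at $t=q^{-1}$ equals the Picard number, which is one of the equivalent forms in Theorem~\ref{t2}) to the finiteness of the Tate--Shafarevich group $\Sha(E/K)$ and the Birch--Swinnerton-Dyer conjecture for $E$. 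The Shioda--Tate formula expresses $\mathrm{NS}(X_{1})$ in terms of the components of the bad fibres, the image of $f^{*}$, and the Mordell--Weil group $E(K)$; correspondingly the $\ell$-adic cohomology $H^{2}(X,\mathbb{Q}_{\ell}(1))$ decomposes, via the Leray spectral sequence for $f$, into a "fixed part" coming from the base and bad fibres (which is manifestly generated by algebraic classes) and a part governed by the $\ell$-adic realization of $E$ over $K$.

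Next I would exploit that $E$ over the function field $K$ has, by a theorem of generic-fibre type, good reduction outside a finite set, and that its $L$-function and $\ell$-adic cohomology can be studied by global class field theory / the cohomology of the relevant constructible sheaf $R^{1}f_{*}\mathbb{Q}_{\ell}$ on $\mathbb{P}^{1}$. The Tate conjecture for $X_{1}$ in the sense of Theorem~\ref{t2}(d) then becomes exactly the assertion $\mathrm{ord}_{s=1}L(E/K,s)=\mathrm{rank}\,E(K)$ together with finiteness of $\Sha(E/K)[\ell^{\infty}]$ — and for elliptic curves over function fields of curves over finite fields this is a theorem (finiteness of $\Sha$ and BSD in this setting, essentially due to Tate and Milne, building on the Artin--Tate formalism; equivalently one can deduce finiteness of $\Sha(E/K)$ from the Tate conjecture for divisors on an auxiliary product of curves or abelian variety, which is Theorem~\ref{t7}). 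Concretely: the Jacobian of the curve obtained from a good model, or an abelian variety isogenous to $\prod$ of fibres / to the "Mordell--Weil" abelian variety, lets one transport the statement into the framework where Theorem~\ref{t7} applies directly to divisor classes.

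The main obstacle — and the place where the hypothesis of an elliptic pencil is essential — is precisely the passage from $H^{2}$ of the $K3$ surface to an object for which the Tate conjecture is already known. For a general $K3$ surface over $\mathbb{F}$ one has no such reduction (this is why the general case required later work, e.g.\ via the Kuga--Satake construction); but an elliptic fibration makes $X$ "dominated in cohomology" by curves and abelian varieties in the relevant degree, and then the folklore around Corollary~\ref{t9} and Theorem~\ref{t7} takes over. Handling the bad fibres (Kodaira types), the possible absence of a section, and the finiteness of $\Sha$ carefully is where the real work lies; everything else is the Shioda--Tate bookkeeping plus invocation of Theorem~\ref{t7}.
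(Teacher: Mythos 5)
Your reduction is circular at the decisive step. Via the Artin--Tate formalism (your appeal to Theorem \ref{t2}(d), the Leray spectral sequence and Shioda--Tate), the Tate conjecture $T^{1}$ for the elliptic surface $X_{1}/\mathbb{F}_{q}$ is \emph{equivalent} to the finiteness of the $\ell$-primary part of the Tate--Shafarevich group of the generic fibre $E/\mathbb{F}_{q}(t)$ (equivalently, to BSD for $E$). But finiteness of $\Sha(E/K)$ for elliptic curves over global function fields is \emph{not} a known theorem in this generality; it is precisely the statement that has to be proved, and it does not follow from Theorem \ref{t7}. Tate's theorem concerns homomorphisms of abelian varieties over a \emph{finite} field, whereas $E$ lives over the function field $\mathbb{F}_{q}(t)$; even the Zarhin--Faltings extension (homomorphisms over finitely generated fields, cf.\ \ref{t9b}) gives control of $\mathrm{Hom}$'s, not of $\Sha$. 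The cases where BSD/finiteness of $\Sha$ over function fields are known (e.g.\ constant or isotrivial curves, or fibrations whose total space is dominated by a product of curves and abelian varieties) are exactly the cases where the Tate conjecture for the surface is already known by Corollary \ref{t9} and the folklore following it; a general elliptic $K3$ surface is not of this kind, and your claim that the elliptic fibration makes $X$ ``dominated in cohomology'' by curves and abelian varieties is unfounded --- the transcendental part of $H^{2}(X,\mathbb{Q}_{\ell}(1))$ sits in $H^{1}(\mathbb{P}^{1},R^{1}f_{*}\mathbb{Q}_{\ell}(1))$ and is not captured by Theorem \ref{t7}.

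What Artin and Swinnerton-Dyer actually do is attack this equivalent statement directly, and this is the content you are missing: a transcendental ($=$ non-algebraic) Tate class on $X$ produces a sequence $(p_{n})$ in $\Sha(E/K)$ with $\ell p_{n+1}=p_{n}$, i.e.\ a nonzero infinitely $\ell$-divisible element, hence a tower $\cdots\rightarrow P_{n+1}\rightarrow P_{n}\rightarrow\cdots$ of principal homogeneous spaces for $E$ over $\mathbb{F}(\mathbb{P}^{1})$. The heart of the proof is a geometric argument showing that no such tower can exist, by tracking numerical invariants of the elliptic surfaces attached to the $P_{n}$; it is here, and not in any d\'evissage to abelian varieties, that the $K3$ hypothesis is used (it bounds the relevant invariants). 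So your outline correctly identifies the translation into a statement about $\Sha$, but the passage you dismiss as ``a theorem, essentially due to Tate and Milne'' is the entire difficulty, and as written your argument proves nothing beyond the known equivalences.
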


\begin{pf}
Let $X$ be an elliptic $K3$ surface, and let $f\colon X\rightarrow\mathbb{P}%
{}^{1}$ be the pencil of elliptic curves. A transcendental Tate class on $X$
gives rise to a sequence $(p_{n})_{n\geq1}$ of elements of the
Tate-Shafarevich group of the generic fibre of $E=X_{\eta}$ of $f$ such that
$\ell p_{n+1}=p_{n}$ for all $n$. From these elements, we get a tower%
\[
\cdots\rightarrow P_{n+1}\rightarrow P_{n}\rightarrow\cdots
\]
of principle homogeneous spaces for $E$ over $\mathbb{F}{}(\mathbb{P}{}^{1})$.
By studying the behaviour of certain invariants attached to the $P_{n}$, Artin
and Swinnerton-Dyer were able to show that no such tower can exist.
\end{pf}

\noindent For later work on K3 surfaces, see \cite{nygaard1983, nygaardO1985,
zarhin1996}.

\begin{aside}
\label{t14}With the proof of the theorems of Tate (\ref{t7}) and of Artin and
Swinnerton-Dyer (\ref{t13}), there was considerable optimism in the early
1970s that the Tate conjecture would soon be proved for surfaces over finite
fields --- all one had to do was attach a sequence of algebro-geometric
objects to a transcendental Tate class, and then prove that the sequence
couldn't exist. However, the progress since then has been meagre. For example,
we still don't know the Tate conjecture for all $K3$ surfaces over
$\mathbb{F}{}$.
\end{aside}

\section{Algebraic classes have the Tannaka property}

Let $\mathcal{S}{}$ be a class of algebraic varieties over $\mathbb{F}{}$
containing the projective spaces and closed under disjoint unions and products
and passage to a connected component.

\begin{theorem}
\label{t15}Let $H_{W}$ be a Weil cohomology theory on the algebraic varieties
over $\mathbb{F}{}$ with coefficients in a field $Q$ in the sense of
\cite{kleiman1994}, \S 3). Assume that for all $X\in\mathcal{S}{}$ the kernel
of the cycle map $Z^{\ast}(X)\rightarrow H_{W}^{2\ast}(X)(\ast)$ consists
exactly of the cycles numerically equivalent to zero. Let $X\in\mathcal{S}{}$,
and let $G_{X}$ be the algebraic subgroup of $\GL(H_{W}^{\ast}(X))\times
\GL(Q{}(1))$ fixing all algebraic classes on all powers of $X$. Then the
$Q$-vector space $H_{W}^{2\ast}(X^{n})(\ast)^{G_{X}}$ is spanned by algebraic
classes for all $n$.
\end{theorem}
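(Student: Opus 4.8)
The plan is to reduce the statement to a Tannakian assertion about the category of motives generated by $X$, using the hypothesis on the cycle map to manufacture a fibre functor. The key observation is that the condition ``$\ker(c^\ast) = $ numerical equivalence'' for all $X \in \mathcal{S}$ is precisely what is needed for $H_W$ to define a fibre functor $\omega_W$ on the category $\Mot(\mathcal{S})$ of motives built from varieties in $\mathcal{S}$ using numerical equivalence (as in the motivic interpretation section): the Künneth projectors are algebraic because we may assume the standard conjecture $D$ (numerical $=$ homological) holds by hypothesis, so $\Mot(\mathcal{S})$ is a semisimple Tannakian category over $\mathbb{Q}$ and $\omega_W$ is an exact faithful tensor functor to $Q$-vector spaces.

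\textbf{Key steps.} First I would form the Tannakian category $\mathcal{M} = \langle h(X)\rangle^{\otimes}$, the tensor subcategory of $\Mot(\mathcal{S})$ generated by the motive $h(X) = \bigoplus_i h^i(X)$ and the Tate object $Q(1)$; since $\Mot(\mathbb{F})$ is semisimple Tannakian, so is $\mathcal{M}$. Second, I would identify the Tannaka group: $\omega_W$ makes $\mathcal{M}$ equivalent to $\Rep_Q(\mathscr{G})$ for an affine group scheme $\mathscr{G}$ over $Q$, and I would check that $\mathscr{G}$ is exactly $G_X$, the subgroup of $\GL(H_W^\ast(X)) \times \GL(Q(1))$ fixing all algebraic classes on all powers $X^n$ — this is essentially the definition of the Tannaka group as the stabilizer of the tensor-generating object, combined with the fact that morphisms in $\mathcal{M}$ between tensor powers of $h(X)$ and $Q(1)$ are, by construction of the motivic category, spanned by algebraic correspondences, i.e. algebraic classes. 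Third, the fundamental theorem of Tannakian categories gives, for each $n$, a canonical isomorphism
\[
\Hom_{\mathcal{M}}(\1, h^{2\ast}(X^n)(\ast)) \;\xrightarrow{\ \sim\ }\; H_W^{2\ast}(X^n)(\ast)^{G_X},
\]
because taking $G_X$-invariants on the $\omega_W$-realization computes $\Hom$ from the unit object. Finally, the left-hand side is by definition spanned by (numerical classes of) algebraic cycles of codimension $\ast$ on $X^n$, and under $\omega_W$ these map to the algebraic classes in $H_W^{2\ast}(X^n)(\ast)$; faithfulness of $\omega_W$ (equivalently, the injectivity half of the hypothesis, which is the nontrivial direction of numerical $=$ homological) shows the map is injective, and the Tannakian isomorphism shows it is onto the $G_X$-invariants. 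Hence $H_W^{2\ast}(X^n)(\ast)^{G_X}$ is spanned by algebraic classes.

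\textbf{Main obstacle.} The delicate point is verifying that the category $\mathcal{M}$ is genuinely Tannakian with $\omega_W$ a fibre functor — i.e. that $\omega_W$ is \emph{exact and faithful} and that $\mathcal{M}$ has internal Homs and duals compatible with $\omega_W$. Faithfulness is where the full strength of the hypothesis enters: a nonzero morphism of motives is a cycle not numerically equivalent to zero, and we need its $H_W$-realization to be nonzero, which is exactly the inclusion $\ker(c^\ast) \subseteq \{\text{num.\ equiv.\ }0\}$ applied on a product variety (the source and target being summands of some $h^\ast(Y)$, $Y \in \mathcal{S}$). Rigidity (existence of duals) requires the Künneth components of the diagonal to be algebraic, which again follows from the hypothesis via Lieberman's argument, since over $\mathbb{F}$ numerical and homological equivalence agreeing forces the standard conjecture $C$ in the relevant range. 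Once these structural facts are in place, the rest is the formal Tannakian dictionary; the only subtlety beyond it is bookkeeping the Tate twists so that $G_X \subseteq \GL(H_W^\ast(X)) \times \GL(Q(1))$ rather than a quotient, which is handled by including $Q(1)$ explicitly among the generators of $\mathcal{M}$.
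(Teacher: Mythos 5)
Your proposal follows essentially the same route as the paper: form the category of motives over $\mathbb{F}$ based on the varieties in $\mathcal{S}$ with numerical equivalence classes of cycles as correspondences, use the hypothesis to make $H_{W}$ into a fibre functor $\omega$, invoke Jannsen's semisimplicity, and read off the $G_X$-invariants from the Tannakian identity $\Hom(\1,h^{2\ast}(X^{n})(\ast))\otimes Q\simeq H_{W}^{2\ast}(X^{n})(\ast)^{G}$, with $G_{X}$ the image of $G=\underline{\Aut}^{\otimes}(\omega)$ in $\GL(H_{W}^{\ast}(X))\times\GL(Q(1))$. Two points in your write-up should be corrected, though neither derails the argument. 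First, the algebraicity of the K\"unneth components of the diagonal is not something to extract from the hypothesis: over $\mathbb{F}$ it is an unconditional theorem of Katz and Messing (a consequence of Deligne's proof of the Weil conjectures), and that is what the paper invokes; your claim that the hypothesis (numerical $=$ homological) forces conjecture $C$ ``by Lieberman's argument'' is not a standard implication and, as written, is a gap --- fortunately an unnecessary one. Second, the two inclusions in the hypothesis play the opposite roles to the ones you assign them: the inclusion (homologically trivial $\Rightarrow$ numerically trivial) is automatic for any Weil cohomology, whereas the nontrivial content, namely that numerically trivial cycles have zero class in $H_{W}$, is exactly what makes $H_{W}$ well defined on numerical correspondences; faithfulness is then formal (rigidity plus semisimplicity), not ``the nontrivial direction''. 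Finally, identifying $G_{X}$ (the fixer of the algebraic classes) with the Tannaka group of the subcategory generated by $h(X)$ and $Q(1)$ is not quite ``by definition'': it uses that this group is reductive, so that it is recovered as the stabilizer of its invariant tensors --- the paper compresses this into the phrase ``$G_{X}$ is the image of $G$'', and your argument needs the same step made explicit or cited.
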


\begin{proof}
Let $\Mot(\mathbb{F}{})$ be the category of motives over $\mathbb{F}{}$ based
on the varieties in $\mathcal{S}{}$ and using numerical equivalence classes of
algebraic cycles as correspondences. Because the K\"{u}nneth components of the
diagonal are known to be algebraic, $\Mot(\mathbb{F}{})$ is a semisimple
Tannakian category (\cite{jannsen1992}). Our assumption on the cycle map
implies that $H_{W}$ defines a fibre functor $\omega$ on $\Mot(\mathbb{F}{})$.
It follows from the definition of $\Mot(\mathbb{F}{})$, that for any variety
$X$ over $\mathbb{F}{}$ and $n\geq0$,%
\[
Z_{\text{num}}^{\ast}(X^{n})_{\mathbb{Q}{}}\simeq\Hom(\1,h^{2\ast}(X^{n}%
)(\ast))
\]
where $Z_{\text{num}}^{\ast}(X^{n})$ is the graded $\mathbb{Z}$-algebra of
algebraic cycles modulo numerical equivalence and the subscript means that we
have tensored with $\mathbb{Q}{}$. On applying $\omega$ to this isomorphism,
we obtain an isomorphism%
\[
Z_{\text{num}}^{\ast}(X^{n})_{Q}\simeq\Hom_{G}(Q,H_{W}^{2\ast}(X^{n}%
)(\ast))=H_{W}^{2\ast}(X^{n})(\ast)^{G}%
\]
where $G=\underline{\Aut}^{\otimes}(\omega)$. Since $G_{X}$ is the image of
$G$ in $\GL(H_{W}^{\ast}(X))\times\GL(Q{}(1))$, this implies the assertion.
\end{proof}

This is a powerful result: once we know that some cohomology classes are
algebraic, it allows us to deduce that many more are (the group fixing the
classes we \textit{know} to be algebraic contains the group fixing
\textit{all} algebraic classes, and so any class that it fixes is in the span
of the algebraic classes). On applying the theorem to the smallest class
$\mathcal{S}{}$ satisfying the conditions and containing a variety $X$, we
obtain the following criterion:

\bquote

\begin{plain}
\label{t16}Let $X$ be an algebraic variety over $\mathbb{F}{}$ such that
$E(X^{n},\ell)$ holds for all $n$. In order to prove that $T(X^{n},\ell)$
holds for all $n$, it suffices to find enough algebraic classes on the powers
of $X$ for some Frobenius map to be Zariski dense in the algebraic subgroup of
$\GL(H^{\ast}(X,\mathbb{Q}{}_{\ell}))\times\GL(\mathbb{Q}{}_{\ell}(1))$ fixing
the classes.
\end{plain}

\equote\noindent

\begin{aside}
\label{t17}Theorem \ref{t15} holds also for almost-algebraic classes in
characteristic zero in the sense of \cite{serre1974}, 5.2 and \cite{tate1994}, p76.
\end{aside}

\section{On the equality of equivalence relations}

Recall that an abelian variety $A$ has sufficiently many endomorphisms if
$\End^{0}(A)$ contains an \'{e}tale $\mathbb{Q}{}$-subalgebra $E$ of degree
$2\dim A$. It is known that such an $E$ can be chosen to be a product of
CM-fields. There then exists a unique involution $\iota_{E}$ of $E$ such that
$\rho\circ\iota_{E}=\iota\circ\rho$ for any homomorphism $\rho\colon
E\rightarrow\mathbb{C}{}$.

The next theorem (and its proof) is an abstract version of the main theorem of
\cite{clozel1999}.

\begin{theorem}
\label{t18}Let $k$ be an algebraically closed field, and let $H_{W}^{\ast}$ be
a Weil cohomology theory with coefficient field $Q$ . Let $A$ be an abelian
variety over $k$ with sufficiently many endomorphisms, and choose a
$\mathbb{Q}{}$-subalgebra $E$ of $\End^{0}(A)$ as above. Assume that $Q$
splits $E$ (i.e., $E\otimes_{\mathbb{Q}{}}Q\approx Q^{[E\colon\mathbb{Q}{}]}$)
and that there exists an involution $\iota_{Q}$ of $Q$ such that

\begin{itemize}
\item $\sigma\circ\iota_{E}=\iota_{Q}\circ\sigma$ for all $\sigma\colon
E\rightarrow Q$,

\item there exists a Weil cohomology theory $H_{W_{0}}^{\ast}\subset
H_{W}^{\ast}$ with coefficient field $Q_{0}\overset{\text{{\tiny def}}}%
{=}Q^{\langle\iota_{Q}\rangle}$ such that $H_{W}^{\ast}=Q\otimes_{Q_{0}%
}H_{W_{0}}^{\ast}{}$.
\end{itemize}

\noindent Then the kernel of the cycle class map $Z^{\ast}(X)\rightarrow
H_{W}^{2\ast}(X)(\ast)$ consists exactly of the cycles numerically equivalent
to zero.
\end{theorem}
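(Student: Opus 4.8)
My plan is to split off the trivial inclusion, reduce the essential one to a Hodge-standard-type non-degeneracy for algebraic cycle classes on the powers $A^{n}$, and then obtain that non-degeneracy by transporting the positive-definiteness of the Rosati involution through the $\mathrm{CM}$ decomposition of $H_{W}^{\ast}(A^{n})$, with $Q_{0}$ and $H_{W_{0}}^{\ast}$ playing the parts of $\mathbb{R}$ and real Betti cohomology. For the trivial inclusion: if $\mathrm{cl}(z)=0$ then $\deg(z\cdot w)=\Tr_{X}\bigl(\mathrm{cl}(z)\cup\mathrm{cl}(w)\bigr)=0$ for every cycle $w$ of complementary dimension, so $\ker(\mathrm{cl})$ always lies in the group of numerically trivial cycles, for any $X$. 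It suffices to prove the reverse inclusion for $X=A^{n}$, all $n$: the abelian variety $A^{n}$ again has sufficiently many endomorphisms, with Rosati-stable étale subalgebra $E^{\times n}$, and the three hypotheses are inherited, so the theorem applied to $A^{n}$ yields the statement for $A^{n}$. Writing $\mathcal{A}^{r}\subseteq H_{W}^{2r}(A^{n})(r)$ for the space of algebraic classes ($d=\dim A$), a numerically trivial cycle is exactly one whose class is orthogonal to $\mathcal{A}^{nd-r}$ under the Poincaré pairing; hence the reverse inclusion for all $A^{n}$ is equivalent to the assertion that the cup-product pairing $\mathcal{A}^{r}\times\mathcal{A}^{nd-r}\to Q$ is non-degenerate for all $n$ and $r$. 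The entire content of the theorem lies in this non-degeneracy.

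To get a grip on it I would use the $\mathrm{CM}$ structure. Since $Q$ splits $E$, we have $E\otimes_{\mathbb{Q}}Q\simeq\prod_{\sigma\colon E\to Q}Q$; as $H_{W}^{1}(A)$ is free of rank one over $E\otimes_{\mathbb{Q}}Q$ it splits into lines $V_{\sigma}$, so $H_{W}^{1}(A^{n})=\bigoplus_{1\le i\le n}\bigoplus_{\sigma}V_{i,\sigma}$ and $H_{W}^{\ast}(A^{n})=\bigwedge^{\ast}H_{W}^{1}(A^{n})$ acquires a monomial basis $e_{S}$ indexed by the subsets $S\subseteq\{1,\dots,n\}\times\Hom(E,Q)$. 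A polarization of $A$ gives a non-degenerate alternating form $\psi$ on $H_{W}^{1}(A)$ whose adjoint for the $E$-action is $\iota_{E}$, so $\psi$ pairs $V_{i,\sigma}$ with $V_{i,\iota_{E}\sigma}$; the hypothesis $\sigma\circ\iota_{E}=\iota_{Q}\circ\sigma$ identifies the involution $\sigma\mapsto\iota_{E}\sigma$ of $\Hom(E,Q)$ with the one induced by $\iota_{Q}$, and shows that the $\iota_{Q}$-semilinear involution $c=\iota_{Q}\otimes\id$ of $H_{W}^{\ast}=Q\otimes_{Q_{0}}H_{W_{0}}^{\ast}$ carries $V_{i,\sigma}$ onto $V_{i,\iota_{E}\sigma}$ compatibly with $\psi$ and with cup product. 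This is precisely the structure supplied in characteristic zero by complex conjugation interchanging a $\mathrm{CM}$ type with its conjugate; in these coordinates the Poincaré pairing sends $(e_{S},e_{T})$ to $0$ unless $T$ is the complement of $S$, and it is a tensor power of $\psi$, hence built ultimately out of the $\iota_{E}$-twisted trace form of $E$.

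The decisive step is the non-degeneracy on all of $\mathcal{A}^{\ast}$. Cycle classes lie in $H_{W_{0}}^{\ast}$, which is implicit in $H_{W_{0}}^{\ast}\subset H_{W}^{\ast}$ being a sub-Weil-cohomology, and because $E\subseteq\End^{0}(A)$ acts by algebraic correspondences each $E\otimes_{\mathbb{Q}}Q$-isotypic component of $H_{W}^{\ast}(A^{n})$ is cut out by an algebraic projector, so $\mathcal{A}^{\ast}$ is the direct sum of its intersections with those components. On the subalgebra generated by divisor classes the non-degeneracy is straightforward: describing divisor classes on $A^{n}$ through Rosati-symmetric endomorphisms (cf. the proof of Corollary~\ref{t9}) one finds this subalgebra is spanned by the $e_{S}$ for which $S$ admits a perfect matching of each $(i,\sigma)$ with some $(j,\iota_{E}\sigma)$, a property preserved by passage to the complement, so these monomials pair non-degenerately among themselves. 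The obstacle — and the crux — is the exotic algebraic classes, which need not be products of divisor classes (Theorem~\ref{t12}): one must show that the $\mathbb{Q}$-valued symmetric form $x\mapsto\Tr_{X}\bigl(x\cup L^{nd-2r}c(x)\bigr)$ on $\mathcal{A}^{r}$, formed with iterated cup product $L$ against a fixed ample class and the Lefschetz decomposition (which for $A^{n}$ respects algebraic classes), is definite, hence non-degenerate on $\mathcal{A}^{r}$; for then $\mathrm{cl}(z)$, being orthogonal to $\mathcal{A}^{nd-r}\supseteq L^{nd-2r}\mathcal{A}^{r}$, lies in the radical of this form and so vanishes. The definiteness should hold because on each $c$-stable $\mathrm{CM}$-isotypic piece the form is, by the computation of the previous paragraph, built from the Rosati form $a\mapsto\Tr_{E/\mathbb{Q}}(a\,\iota_{E}a)$ and its tensor powers, which is positive-definite over $\mathbb{Q}$. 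Carrying this positivity from $\mathbb{R}$ to $Q_{0}$ by means of the real structure $(\iota_{Q},H_{W_{0}}^{\ast})$, and making it reach \textit{all} algebraic classes and not merely the Lefschetz ones, is exactly the work done in \cite{clozel1999}, and I expect it to be the main difficulty.
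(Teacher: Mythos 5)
Your setup is essentially the one in the paper's sketch: the decomposition of $H_{W}^{\ast}(A)$ into one-dimensional eigenlines under $E\otimes_{\mathbb{Q}}Q$, the $\iota_{Q}$-semilinear involution $c=\iota_{Q}\otimes\id$ attached to the $Q_{0}$-structure $H_{W_{0}}^{\ast}$, the observation that $c$ permutes the eigenlines by $\sigma\mapsto\sigma\circ\iota_{E}$ and stabilizes the span of the algebraic classes, and the reduction of the theorem to non-degeneracy of the intersection pairing on algebraic classes in complementary degrees. Two small corrections: it is $Q\cdot\mathcal{A}^{\ast}$, not $\mathcal{A}^{\ast}$ itself, that splits into its isotypic pieces, because the projectors lie in $E\otimes_{\mathbb{Q}}Q$ and not in $\End^{0}(A)\otimes\mathbb{Q}$; and the passage to the powers $A^{n}$ is not required by the statement (the paper handles $A^{n}$ by applying the theorem to $A^{n}$ with the algebra $E^{n}$, as in the proof of Corollary \ref{t19}), though it is harmless.

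The genuine gap is the decisive last step. Since algebraic classes lie in $H_{W_{0}}^{2\ast}$, they are fixed by $c$, so your form $x\mapsto\langle x\cdot L^{nd-2r}c(x)\rangle$ on $\mathcal{A}^{r}$ is just the rational intersection form $x\mapsto\langle x\cdot L^{nd-2r}x\rangle$, and asking it to be definite on \emph{all} algebraic classes is exactly the standard conjecture of Hodge type for abelian varieties over $\mathbb{F}$. That conjecture is open, and the present paper treats it as such: Theorem \ref{t32} and Aside \ref{t33} obtain it only conditionally, and stress that even the full Tate conjecture is not known to imply it. There is also no mechanism for ``carrying positivity from $\mathbb{R}$ to $Q_{0}$'': in the intended application (Corollary \ref{t19}) $Q_{0}$ is the completion of a CM field at an $\ell$-adic prime, where signatures are invisible, and knowing that each eigenline pairs with its $c$-conjugate line through a Rosati-type scalar gives no control over the signs of the resulting rational numbers, nor over cancellations among the isotypic components of a general (possibly exotic) algebraic class. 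This also mischaracterizes \cite{clozel1999}: Clozel does not prove definiteness --- the whole point of his argument is to circumvent the unknown positivity in characteristic $p$ --- he proves non-degeneracy directly, by the construction the paper sketches: if some isotypic component $\xi_{I,J}$ of an algebraic class is nonzero, the one-dimensionality of the eigenspaces gives $H_{W}^{2r}(A)_{I\sqcup J}(r)\subset Q\cdot\mathcal{A}_{W}^{\ast}(A)$, the involution then puts the conjugate line in $Q\cdot\mathcal{A}_{W}^{\ast}(A)$ as well, and multiplication by suitable algebraic (Lefschetz) classes yields an algebraic class of complementary degree whose product with the given class is nonzero. As written, your plan therefore reduces the theorem to a strictly stronger open conjecture rather than proving it; to complete the proof you would need to replace the definiteness claim by this explicit duality construction.
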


\begin{pf}
Choose a subset $\Phi$ of $\Hom(E,Q\mathbb{)}$ such that
\[
\Hom(E,Q{})=\Phi\sqcup\iota_{Q}\Phi
\]
where
\[
\iota_{Q}\Phi=\{\iota_{Q}\circ\varphi\mid\varphi\in\Phi\}=\Phi\iota_{E}.
\]
The space $H_{W}^{1}(A)$ is free of rank one over $E\otimes_{\mathbb{Q}{}}Q$,
and so $H_{W}^{1}(A)=\bigoplus\nolimits_{\sigma\in\Phi\sqcup\iota\Phi}%
H_{W}^{1}(A)_{\sigma}$ where $H_{W}^{1}(A)_{\sigma}$ is the one-dimensional
$Q$-subspace on which $E$ acts through $\sigma$. Similarly,
\[
H_{W}^{r}(A)\simeq\bigwedge\nolimits_{Q}^{r}H_{W}^{1}(A)=\bigoplus
_{I,J,|I|+|J|=r}H_{W}^{r}(A)_{I,J}%
\]
where $I$ and $J$ are subsets of $\Phi$ and $\iota\Phi$ respectively, and
$H_{W}^{r}(A)_{I,J}$ is the one-dimensional subspace on which $a\in E$ acts as
$\prod\nolimits_{\sigma\in I\sqcup J}\sigma a$. Let $\mathcal{A}{}_{W}^{\ast
}(A)$ be the $\mathbb{Q}{}$-subalgebra of $H_{W}^{2\ast}(A)(\ast)$ generated
by the algebraic classes. Since $\mathcal{A}{}_{W}^{\ast}(A)\subset H_{W_{0}%
}^{2\ast}(A)(\ast)$, the $Q$-subspace $Q\cdot\mathcal{A}{}_{W}^{\ast}(A)$ of
$H_{W}^{2\ast}(A)(\ast)$ is stable under the involution $\iota_{H}$ of
$H_{W}^{2\ast}(A)(\ast)$ defined by the $Q_{0}$-substructure $H_{W_{0}}%
^{2\ast}(A)(\ast)$. Therefore, if $H_{W}^{2r}(A)_{I\sqcup J}(r)$ is contained
in $Q\cdot\mathcal{A}{}_{W}^{\ast}(A)$, then so also is
\[
\iota_{W}H_{W}^{2r}(A)_{I\sqcup J}(r)=H_{W}^{2r}(A)_{\iota_{Q}I\sqcup\iota
_{Q}J}(r).
\]
Using this, Clozel constructs, for every nonzero algebraic class, an algebraic
class of complementary degree whose product with the first class is nonzero.
\end{pf}

\begin{corollary}
\label{t19}For an abelian variety $A$ over $\mathbb{F}{}$, there is an
infinite set of primes $\ell\neq p$ such that $E(A^{n},\ell)$ is true for all
$n$.
\end{corollary}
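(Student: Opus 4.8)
The plan is to deduce the corollary from Theorem~\ref{t18}, applied to $A$ itself, for a suitable infinite family of primes $\ell$ produced by Chebotarev. By Theorem~\ref{t7}, $A$ has sufficiently many endomorphisms, so I would first fix a $\mathbb{Q}$-subalgebra $E$ of $\End^{0}(A)$ that is a product of CM-fields, $E=\prod_{i}E_{i}$, of degree $2\dim A$, together with its canonical involution $\iota_{E}$ (complex conjugation on each factor). Let $L$ be the Galois closure of $E$ over $\mathbb{Q}$; it is again a CM-field, Galois over $\mathbb{Q}$, and complex conjugation $c\in\Gal(L/\mathbb{Q})$ is central of order $2$. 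By the Chebotarev density theorem for $L/\mathbb{Q}$, the set $S$ of primes $\ell\neq p$ that are unramified in $L$ and satisfy $\Frob_{\ell}=c$ is infinite (of density $1/[L:\mathbb{Q}]$). I claim $E(A^{n},\ell)$ holds for all $n$ whenever $\ell\in S$.

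Fix $\ell\in S$. Since $\Frob_{\ell}=c$ and $c$ restricts to the nontrivial element of $\Gal(E_{i,w}/F_{i,v})$ for every $\ell$-adic prime $w$ of $E_{i}$ lying over a prime $v$ of $F_{i}=E_{i}^{+}$, each $\ell$-adic prime of $F_{i}$ splits completely in $\mathbb{Q}_{\ell}$ while the prime of $E_{i}$ above it has residue degree $2$; hence $E\otimes_{\mathbb{Q}}\mathbb{Q}_{\ell}\simeq(\mathbb{Q}_{\ell^{2}})^{\dim A}$, where $\mathbb{Q}_{\ell^{2}}$ is the unramified quadratic extension of $\mathbb{Q}_{\ell}$, and $\iota_{E}$ induces the nontrivial automorphism of each factor. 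Now take $Q=\mathbb{Q}_{\ell^{2}}$, let $\iota_{Q}$ be its nontrivial automorphism, $Q_{0}=\mathbb{Q}_{\ell}=Q^{\langle\iota_{Q}\rangle}$, and set $H^{\ast}_{W_{0}}(X)=H^{\ast}(X_{\mathbb{F}},\mathbb{Q}_{\ell})$ and $H^{\ast}_{W}(X)=\mathbb{Q}_{\ell^{2}}\otimes_{\mathbb{Q}_{\ell}}H^{\ast}_{W_{0}}(X)=H^{\ast}(X_{\mathbb{F}},\mathbb{Q}_{\ell^{2}})$. Then $Q$ splits $E$, and for every $\sigma\colon E\to Q$ — each such $\sigma$ being a projection of $E\otimes\mathbb{Q}_{\ell}$ onto one factor $\mathbb{Q}_{\ell^{2}}$ followed by one of its two $\mathbb{Q}_{\ell}$-automorphisms — one checks at once that $\sigma\circ\iota_{E}=\iota_{Q}\circ\sigma$, because the Galois group of $\mathbb{Q}_{\ell^{2}}/\mathbb{Q}_{\ell}$ is abelian. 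Thus the hypotheses of Theorem~\ref{t18} hold, and the theorem gives, for every power $A^{n}$, that the kernel of $Z^{\ast}(A^{n})\to H^{2\ast}_{W}(A^{n})(\ast)$ is exactly the subgroup of numerically trivial cycles. Since $\mathbb{Q}_{\ell^{2}}$ is faithfully flat over $\mathbb{Q}_{\ell}$ and the $H_{W}$-cycle class of $z$ is its $H_{W_{0}}$-cycle class tensored with $1$, this kernel coincides with the kernel of the $\ell$-adic cycle map $Z^{\ast}(A^{n})\to H^{2\ast}(A^{n}_{\mathbb{F}},\mathbb{Q}_{\ell})(\ast)$, which is precisely $E(A^{n},\ell)$.

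The step I expect to be the crux — and the reason the statement is not completely formal — is the passage through the coefficient field: $\mathbb{Q}_{\ell}$ itself admits no nontrivial involution, so Theorem~\ref{t18} cannot be invoked with $Q=\mathbb{Q}_{\ell}$, and the remedy is to enlarge $Q$ to $\mathbb{Q}_{\ell^{2}}$ and to select $\ell$ (via $\Frob_{\ell}=c$) so that $E\otimes\mathbb{Q}_{\ell}$ becomes ``everywhere the unramified quadratic extension with $\iota_{E}$ equal to the Frobenius,'' which is exactly the configuration making $\mathbb{Q}_{\ell^{2}}$ split $E$ compatibly with both $\iota_{E}$ and $\iota_{Q}$. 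Once this is arranged, the remaining verifications (that $Q$ splits $E$, the compatibility $\sigma\circ\iota_{E}=\iota_{Q}\circ\sigma$, and the faithfully-flat comparison between $\mathbb{Q}_{\ell^{2}}$- and $\mathbb{Q}_{\ell}$-homological equivalence) are routine; one must also remember to discard the finitely many $\ell$ equal to $p$ or ramified in $L$, and to note that — as in Clozel's original argument underlying Theorem~\ref{t18} — the conclusion holds simultaneously for all powers $A^{n}$, since a power of a power of $A$ is again a power of $A$.
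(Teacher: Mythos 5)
Your proposal is correct and is essentially the paper's own argument: Milne likewise fixes $E$ a product of CM-fields in $\End^{0}(A)$, uses Chebotarev to produce infinitely many $\ell\neq p$ at which complex conjugation acts on an $\ell$-adic completion of a Galois CM splitting field $Q'$ of $E$ (he only requires $\iota$ to lie in the decomposition group of an $\ell$-adic prime $v$ and takes $Q=Q'_{v}$, whereas you insist $\Frob_{\ell}=\iota$ and take $Q=\mathbb{Q}_{\ell^{2}}$ --- an inessential tightening), and then applies Theorem \ref{t18} with $H_{W}=H_{\ell}\otimes Q$ and $H_{W_{0}}=H_{\ell}\otimes Q_{0}$. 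The one point to tighten is the passage to powers: Theorem \ref{t18} concerns a single abelian variety together with a chosen maximal \'etale algebra, so for $A^{n}$ you should apply it with $E^{n}\subset\End^{0}(A^{n})$ acting diagonally (still split by $Q$ with the same involution compatibility, for the same $\ell$), as the paper does; the remark that ``a power of a power of $A$ is again a power of $A$'' is not the relevant justification.
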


\begin{proof}
Choose a $\mathbb{Q}{}$-subalgebra $E$ of $\End^{0}(A)$ as before, and let
$Q^{\prime}$ be the composite of the images of $E$ in $\mathbb{C}{}$ under
homomorphisms $E\rightarrow\mathbb{C}{}$. Then $Q^{\prime}$ is a finite Galois
extension of $\mathbb{Q}{}$ that splits $E$ and it is a CM-field. Let $S$ be
the set of primes $\ell\neq p$ such that $\iota$ lies in the decomposition
group of some $\ell$-adic prime $v$ of $Q^{\prime}$. For example, if $\iota$
is the Frobenius element of an $\ell$-adic prime of $Q^{\prime}$, then
$\ell\in S$, and so $S$ has density $>0$. The Weil cohomology theory
$H_{W}\overset{\text{{\tiny def}}}{=}H_{\ell}\otimes Q_{v}^{\prime}$ satisfies
the hypotheses of the theorem for $A$. For $A^{n}$, we can choose the
$\mathbb{Q}{}$-algebra to be $E^{n}$ acting diagonally and use the same set
$S$.
\end{proof}

On combining (\ref{t19}) with (\ref{t16}) we obtain the following criterion:

\bquote

\begin{plain}
\label{t19a}Let $A$ be an abelian variety over $\mathbb{F}$. In order to prove
that $T(A^{n})$ holds for all $n$, it suffices to find enough algebraic
classes on powers of $A$ for some Frobenius endomorphism to be Zariski dense
in the algebraic subgroup of $\GL(H_{\ell}^{\ast}(X))\times\GL(\mathbb{Q}%
{}_{\ell}(1))$ fixing the classes for a suitable $\ell$.
\end{plain}

\equote

\section{The Hodge conjecture and the Tate conjecture}

To go further, we shall need to consider the Hodge conjecture (following
\cite{deligne1982}).

For a variety $X$ over an algebraically closed field $k$ of characteristic
zero, define%
\[
H_{\mathbb{A}{}}^{\ast}(X)=H_{\mathbb{A}{}_{f}}^{\ast}(X)\times H_{\mathrm{dR}%
}^{\ast}(X)\text{ where }H_{\mathbb{A}{}_{f}}^{\ast}(X)=\left(  \varprojlim
_{m}H^{\ast}(X_{\mathrm{et}},\mathbb{Z}{}/m\mathbb{Z}{})\right)
\otimes_{\mathbb{Z}{}}\mathbb{Q}{}.
\]
For any algebraically closed field $K$ containing $k,$%
\[
H_{\mathbb{A}{}_{f}}^{\ast}(X_{K})\simeq H_{\mathbb{A}{}_{f}}^{\ast}(X)\text{
and }H_{\mathrm{dR}}^{\ast}(X_{K})\simeq H_{\mathrm{dR}}^{\ast}(X)\otimes
_{k}K,
\]
and so there is a canonical homomorphism $H_{\mathbb{A}{}}^{\ast
}(X)\rightarrow H_{\mathbb{A}{}}^{\ast}(X_{K})$.

Let $\sigma$ be a homomorphism $k\rightarrow\mathbb{C}{}$%
.\footnote{Throughout, I assume that $k$ is not too big to be embedded into
$\mathbb{C}{}$. For fields that are \textquotedblleft too
big\textquotedblright, one can use property (c) of $\mathcal{B}{}%
_{\mathrm{abs}}^{\ast}$ below as a definition of $\mathcal{B}{}_{\mathrm{abs}%
}^{\ast}(K)$.} An element of $H_{\mathbb{A}{}}^{2\ast}(X)(\ast)$ is
\emph{Hodge relative to }$\sigma$ if its image in $H_{\mathbb{A}{}}^{2\ast
}(X_{\mathbb{C}{}})(\ast)$ is a Hodge class, i.e., lies in $H^{2\ast
}(X_{\mathbb{C}{}},\mathbb{Q}{})(\ast)\subset H_{\mathbb{A}{}}^{2\ast
}(X_{\mathbb{C}{}})(\ast)$ and is of type $(0,0)$.

\begin{conjecture}
[Deligne]\label{t20}If an element of $H_{\mathbb{A}{}}^{2\ast}(X)(\ast)$ is
Hodge relative to one homomorphism $\sigma\colon k\rightarrow\mathbb{C}{}$,
then it is Hodge relative to every such homomorphism.
\end{conjecture}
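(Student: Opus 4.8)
The plan is to recognise this as Deligne's assertion that every Hodge cycle is \textit{absolutely Hodge}. Call a class in $H_{\mathbb{A}{}}^{2\ast}(X)(\ast)$ absolutely Hodge if it is Hodge relative to every embedding $\sigma\colon k\rightarrow\mathbb{C}{}$; Conjecture \ref{t20} then says precisely that being Hodge relative to one $\sigma$ already forces this. I would attack it from two directions.

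\textbf{The conditional route.} If the Hodge conjecture holds, a class that is Hodge relative to $\sigma$ is the cohomology class of an algebraic cycle $Z$ on $X_{\mathbb{C}{}}$ (formed via $\sigma$), and a routine spreading-out argument finishes: $Z$ is defined over a subfield of $\mathbb{C}{}$ finitely generated over $k$, hence extends to a relative cycle over a connected smooth $k$-variety $S$ with function field embedded in $\mathbb{C}{}$; specialising at a point of $S(k)$ produces an algebraic cycle on $X$ whose class in $H_{\mathbb{A}{}}^{2\ast}(X)(\ast)$, by constancy of the cycle class in a smooth proper family, is the original one. An algebraic class on $X$ is manifestly Hodge relative to every $\sigma$, so all the content of the statement lies in an unconditional proof.

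\textbf{The unconditional route (Deligne's method).} First one checks that absolutely Hodge classes are stable under pullback, pushforward, cup product, the K\"{u}nneth projectors and the action of algebraic correspondences, so that they span a ``motivic'' subspace closed under the usual operations. The engine is then Principle~B: if a flat section of the $H_{\mathbb{A}{}}^{2\ast}(\ast)$ of a smooth family over a connected $k$-variety is absolutely Hodge at one point and stays a Hodge class along the family, then it is absolutely Hodge throughout --- a consequence of the algebraicity of Hodge loci and of the stability of such loci under $\Aut(\mathbb{C}{})$. Chaining such families so as to connect a given abelian variety to one of CM type, and using that the Hodge ring of a CM abelian variety is generated by divisor classes and Weil classes, all absolutely Hodge, one recovers Deligne's theorem that every Hodge cycle on an abelian variety is absolutely Hodge. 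For a general $X$ one would then try to cut the relevant part of $H^{\ast}(X)$ out of the cohomology of abelian varieties by absolutely Hodge correspondences.

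\textbf{The main obstacle.} That last step is exactly where the method stalls: outside abelian varieties and the varieties manufactured from them (Fermat hypersurfaces, Kummer surfaces, a handful of $K3$'s), no supply of absolutely Hodge --- or even algebraic --- correspondences is known that would do the job, and one confronts the Hodge conjecture itself. Andr\'{e}'s motivated cycles form a class provably stable under the required operations and for which ``Hodge relative to one $\sigma$'' coincides with ``absolutely Hodge,'' so Conjecture \ref{t20} would follow from knowing that every Hodge class is motivated; but that implication is open and of Hodge-conjecture strength. The honest plan is therefore to deduce the conjecture from the Hodge conjecture in general, to invoke Deligne's theorem in the abelian-variety case, and otherwise to regard it as a test problem essentially as hard as the Hodge conjecture.
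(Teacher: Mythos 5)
This item is a conjecture in the paper (Deligne's conjecture), and the paper gives no proof of it; there is therefore nothing to check your argument against, and the only correct response is the one you in fact give: you do not claim a proof, you explain why none is available, and you record the known partial results. Your assessment matches the paper's own stance. Your ``unconditional route'' is exactly the mechanism the paper abstracts in Theorem \ref{t24} (stability under pullback, pushforward and correspondences, plus the deformation principle (A3), i.e.\ Principle B) and instantiates in Theorem \ref{t26}, which is precisely Deligne's theorem that the conjecture holds for abelian varieties; your remark that outside the abelian-variety orbit no supply of absolute Hodge correspondences is known is the accepted reason the general case is open. Your ``conditional route'' (Hodge conjecture $\Rightarrow$ Conjecture \ref{t20}, by taking the algebraic cycle representing the class via $\sigma$, spreading out over a finitely generated extension of $k$ and specialising to a $k$-point, using constancy of the cycle class in a smooth proper family) is also standard and correct as a sketch; note it is the reverse direction of the implication the paper actually exploits, namely Theorem \ref{t22}, where Conjecture \ref{t20} plus the Tate conjecture yields the Hodge conjecture. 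The one point worth stating carefully if you write this up: in the spreading-out step you must check that the specialised cycle has the same class in $H_{\mathbb{A}}^{2\ast}(X)(\ast)$ (both the $\mathbb{A}_{f}$- and de Rham components), which is where flatness of the section over the parameter variety is used; as phrased this is implicit but fine.
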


An element of $H_{\mathbb{A}{}}^{2\ast}(X)(\ast)$ is \emph{absolutely Hodge}
if it is Hodge relative to every $\sigma$. Let $\mathcal{B}_{\mathrm{abs}%
}^{\ast}(X)$ be the set of absolutely Hodge classes on $X$. Then
$\mathcal{B}_{\mathrm{abs}}^{\ast}(X)$ is a graded $\mathbb{Q}{}$-subalgebra
of $H_{\mathbb{A}}^{2\ast}(X)(\ast)$, and Deligne (1982)
shows:\nocite{deligne1982}

\begin{enumerate}
\item for every regular map $f\colon X\rightarrow Y$, $f^{\ast}$ maps
$\mathcal{B}_{\mathrm{abs}}^{\ast}(Y)$ into $\mathcal{B}_{\mathrm{abs}}^{\ast
}(X)$ and $f_{\ast}$ maps $\mathcal{B}_{\mathrm{abs}}^{\ast}(X)$ into
$\mathcal{B}_{\mathrm{abs}}^{\ast}(Y)$;

\item for every $X$, $\mathcal{B}_{\mathrm{abs}}^{\ast}(X)$ contains the
algebraic classes;

\item for every homomorphism $k\rightarrow K$ of algebraically closed fields,
$\mathcal{B}_{\mathrm{abs}}^{\ast}(X)\simeq\mathcal{B}_{\mathrm{abs}}^{\ast
}(X_{K})$;

\item for any model $X_{1}$ of $X$ over a subfield $k_{1}$ of $k$ with $k$
algebraic over $k_{1}$, $\Gal(k/k_{1})$ acts on $\mathcal{B}_{\mathrm{abs}%
}^{\ast}(X)$ through a finite quotient.
\end{enumerate}

\noindent Property (d) shows that the image of $\mathcal{B}_{\mathrm{abs}%
}^{\ast}(X)$ in $H^{2\ast}(X,\mathbb{Q}{}_{\ell}(\ast))$ consists of Tate classes.

\begin{theorem}
\label{t21}If the Tate conjecture holds for $X$, then all absolutely Hodge
classes on $X$ are algebraic.
\end{theorem}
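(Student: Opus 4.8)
The plan is to extract algebraicity of an absolutely Hodge class directly from property (d) of $\mathcal{B}_{\mathrm{abs}}^{\ast}$ together with the Tate conjecture, after descending to a finitely generated field. First I would choose a model $X_{1}$ of $X$ over a subfield $k_{1}\subseteq k$ finitely generated over $\mathbb{Q}$ and with $k$ algebraic over $k_{1}$; since $X$ is of finite type this is possible, and by property (c) we may as well take $k=\bar{k}_{1}$. Fix $\gamma\in\mathcal{B}_{\mathrm{abs}}^{r}(X)$. By property (d) the group $\Gal(k/k_{1})$ acts on $\mathcal{B}_{\mathrm{abs}}^{\ast}(X)$ through a finite quotient, so after enlarging $k_{1}$ by a finite extension $\gamma$ is fixed by $\Gal(k/k_{1})$; in particular its $\ell$-adic realization $\gamma_{\ell}\in H^{2r}(X,\mathbb{Q}_{\ell}(r))$ is a Tate class for $X_{1}/k_{1}$ --- this is the remark recorded just after properties (a)--(d).

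The second step is to invoke the Tate conjecture, read for the model $X_{1}$ over its finitely generated field of definition: it asserts precisely that the $\Gal(k/k_{1})$-invariant subspace of $H^{2r}(X,\mathbb{Q}_{\ell}(r))$ is spanned, over $\mathbb{Q}_{\ell}$, by the classes $c^{r}(Z)$ of algebraic cycles $Z$ on $X_{1,k_{1}}$ (equivalently on $X$). Hence $\gamma_{\ell}=\sum_{i}\lambda_{i}\,c^{r}(Z_{i})$ for finitely many $Z_{i}\in Z^{r}(X)$ and $\lambda_{i}\in\mathbb{Q}_{\ell}$.

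The third step closes the loop through the Betti realization. Via any $\sigma\colon k\to\mathbb{C}$ an absolutely Hodge class is a rational Betti class, so the Betti realization embeds $\mathcal{B}_{\mathrm{abs}}^{r}(X)$ into $H^{2r}(X_{\mathbb{C}},\mathbb{Q})(r)$; by property (b) this embedding carries the cycle classes into a set whose $\mathbb{Q}$-span I call $\mathcal{A}$, and the $\ell$-adic realization factors as $\mathcal{B}_{\mathrm{abs}}^{r}(X)\hookrightarrow H^{2r}(X_{\mathbb{C}},\mathbb{Q})(r)\hookrightarrow H^{2r}(X_{\mathbb{C}},\mathbb{Q})(r)\otimes\mathbb{Q}_{\ell}\simeq H^{2r}(X,\mathbb{Q}_{\ell}(r))$, compatibly with $c^{r}$ on cycles. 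From the relation above the Betti class $\gamma_{B}$ lies in $\mathcal{A}\otimes_{\mathbb{Q}}\mathbb{Q}_{\ell}$; being rational it lies in $\mathcal{A}$ itself, since for a $\mathbb{Q}$-subspace $W$ of a $\mathbb{Q}$-vector space $V$ one has $W=(W\otimes_{\mathbb{Q}}\mathbb{Q}_{\ell})\cap V$. Thus $\gamma_{B}=\sum a_{i}[Z_{i}]_{B}$ with $a_{i}\in\mathbb{Q}$, and since an absolutely Hodge class is determined by its Betti realization (indeed already by its $\ell$-adic one), $\gamma=\mathrm{cl}\!\left(\sum a_{i}Z_{i}\right)$, which is algebraic.

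Under this reading the theorem is essentially formal: everything is carried by Deligne's functoriality and Galois-finiteness properties of $\mathcal{B}_{\mathrm{abs}}^{\ast}$ (\cite{deligne1982}) and by the statement of the Tate conjecture over a finitely generated field, so I expect no real obstacle. Difficulty would enter only if ``the Tate conjecture holds for $X$'' were meant for the reductions of $X$ modulo primes: one would spread $X_{1}$ out to $\mathcal{X}\to S$ over an open part of $\Spec$ of a finitely generated $\mathbb{Z}$-algebra with function field $k_{1}$, shrink $S$ so that $\gamma_{\ell}$ --- being $\Gal(k/k_{1})$-invariant --- extends to a global section of $R^{2r}\pi_{\ast}\mathbb{Q}_{\ell}(r)$, specialize at a closed point $s$ with finite residue field, use smooth and proper base change to see that $\gamma_{\ell}$ becomes a Tate class on $\mathcal{X}_{s}$, and apply $T^{r}(\mathcal{X}_{s},\ell)$. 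The serious obstacle in that variant is the passage back up to $X$: algebraicity on a single special fibre does not by itself give algebraicity on the generic fibre (a lifting/variational issue, not a formal consequence of the finite-field Tate conjecture), which is precisely why the statement should be read with the conjecture over the field of definition of $X$, where the three formal steps above close the argument.
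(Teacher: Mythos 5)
Your argument is correct and is essentially the paper's own proof: both rest on property (d) placing the $\ell$-adic realization of an absolutely Hodge class among the Tate classes, on the Tate conjecture giving that these are spanned by algebraic classes, and on the comparison isomorphism $H_{B}^{2\ast}(X_{\mathbb{C}})(\ast)\otimes_{\mathbb{Q}}\mathbb{Q}_{\ell}\simeq H_{\ell}^{2\ast}(X)(\ast)$ together with rationality to descend the coefficients to $\mathbb{Q}$. The paper packages the final step as an injectivity/dimension argument on the chain $\mathcal{A}^{\ast}(X)\otimes_{\mathbb{Q}}\mathbb{Q}_{\ell}\hookrightarrow\mathcal{B}_{\mathrm{abs}}^{\ast}(X)\otimes_{\mathbb{Q}}\mathbb{Q}_{\ell}\hookrightarrow\mathcal{T}_{\ell}^{\ast}(X)$, while you argue class-by-class via $(W\otimes_{\mathbb{Q}}\mathbb{Q}_{\ell})\cap V=W$; this is the same mechanism, and your reading of the hypothesis over the characteristic-zero field of definition is the intended one.
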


\begin{proof}
Let $\mathcal{A}{}^{\ast}(X)$ be the $\mathbb{Q}{}$-subspace of $H_{\mathbb{A}%
{}}^{2\ast}(X)(\ast)$ spanned by the classes of algebraic cycles, and consider
the diagram defined by a homomorphism $k\rightarrow\mathbb{C}{}$,
\[
\begin{CD}
@.
\mathcal{B}^{\ast}(X_{\mathbb{C}}) @>{\subset}>>
H_{B}^{2\ast}(X_{\mathbb{C}})(\ast) @>{\subset}>>
H^{2\ast}_{\ell}(X_{\mathbb{C}})(\ast)\\
@.@AA{\bigcup}A@.@AA{\simeq}A\\
\mathcal{A}^{\ast}(X)@>{\subset}>>
\mathcal{B}_{\mathrm{abs}}^{\ast}(X) @>>>
\mathcal{T}^{\ast}_{\ell}(X) @>{\subset}>>
H^{2\ast}_{\ell}(X)(\ast).
\end{CD}
\]
The four groups at upper left are finite dimensional $\mathbb{Q}{}$-vector
spaces, and the map at top right gives an isomorphism $H_{B}^{2\ast
}(X_{\mathbb{C}{}})(\ast)\otimes_{\mathbb{Q}{}}\mathbb{Q}{}_{\ell}\simeq
H_{\ell}^{2\ast}(X_{\mathbb{C}{}})(\ast)$. Therefore, on tensoring the
$\mathbb{Q}{}$-vector spaces in the above diagram with $\mathbb{Q}{}_{\ell}$,
we get injective maps%
\[
\mathcal{A}{}^{\ast}(X)\otimes_{\mathbb{Q}{}}\mathbb{Q}{}_{\ell}%
\hookrightarrow\mathcal{B}_{\mathrm{abs}}^{\ast}(X)\otimes_{\mathbb{Q}{}%
}\mathbb{Q}{}_{\ell}\hookrightarrow\mathcal{T}{}_{\ell}^{\ast}(X).
\]
If the Tate conjecture holds for $X$, then the composite of these maps is an
isomorphism, and so the first is also an isomorphism. This implies that
$\mathcal{A}{}^{\ast}(X)=\mathcal{B}_{\mathrm{abs}}^{\ast}(X)$.
\end{proof}

\begin{theorem}
\label{t22}For varieties $X$ satisfying Deligne's conjecture, the Tate
conjecture for $X$ implies the Hodge conjecture for $X_{\mathbb{C}{}}$.
\end{theorem}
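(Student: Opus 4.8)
The plan is to read off this statement from Theorem~\ref{t21} together with Conjecture~\ref{t20}: the two of them already contain everything needed, and the only work is to match up the different notions of ``Hodge class''. I will do the case $k=\mathbb{C}{}$ first (where $X_{\mathbb{C}{}}=X$ and $\sigma$ is the identity), and then indicate the routine reduction of the general case to it.

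So suppose $k=\mathbb{C}{}$. Then the base-change map $H_{\mathbb{A}{}}^{2\ast}(X)(\ast)\to H_{\mathbb{A}{}}^{2\ast}(X_{\mathbb{C}{}})(\ast)$ is the identity, and hence, via the comparison isomorphisms, ``being Hodge relative to $\sigma=\mathrm{id}$'' identifies the relevant subset of $H_{\mathbb{A}{}}^{2\ast}(X)(\ast)$ with the set of Hodge classes on $X_{\mathbb{C}{}}$. By Deligne's conjecture~\ref{t20} for $X$, an element of $H_{\mathbb{A}{}}^{2\ast}(X)(\ast)$ that is Hodge relative to $\mathrm{id}$ is Hodge relative to every embedding $\mathbb{C}{}\to\mathbb{C}{}$, hence absolutely Hodge; conversely an absolutely Hodge class is in particular Hodge relative to $\mathrm{id}$. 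Thus the Hodge classes on $X_{\mathbb{C}{}}$ are exactly the elements of $\mathcal{B}_{\mathrm{abs}}^{\ast}(X)$. Now invoke Theorem~\ref{t21}: because the Tate conjecture holds for $X$, every absolutely Hodge class on $X$ is a $\mathbb{Q}{}$-linear combination of classes of algebraic cycles (equivalently $\mathcal{A}{}^{\ast}(X)=\mathcal{B}_{\mathrm{abs}}^{\ast}(X)$). Combining the last two assertions, every Hodge class on $X_{\mathbb{C}{}}$ is algebraic, which is the Hodge conjecture for $X_{\mathbb{C}{}}$.

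For general $k$, I would reduce to the previous case by replacing $X$ with $X_{\mathbb{C}{}}$, after checking that both hypotheses persist. That the Tate conjecture still holds for $X_{\mathbb{C}{}}$ is clear, since $\mathcal{T}{}_{\ell}^{\ast}$ and the group of algebraic cycles are each computed from a model over a subfield finitely generated over $\mathbb{Q}{}$ and are unaffected by enlarging the algebraically closed base field. That $X_{\mathbb{C}{}}$ still satisfies Deligne's conjecture rests on Deligne's descent results for absolutely Hodge classes --- properties (a)--(d) above, in particular the isomorphism $\mathcal{B}_{\mathrm{abs}}^{\ast}(X)\simeq\mathcal{B}_{\mathrm{abs}}^{\ast}(X_{K})$ under extension of algebraically closed fields and the finiteness of the Galois action on a model over a finitely generated field.

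In short, the theorem is essentially a formal corollary of Theorem~\ref{t21} and Conjecture~\ref{t20}. The one step that requires any care --- the ``main obstacle'', such as it is --- is the bookkeeping of the last paragraph: one must make sure that over $\mathbb{C}{}$ the elements of $H_{\mathbb{A}{}}^{2\ast}(X)(\ast)$ that are ``Hodge relative to the identity'' really exhaust all the Hodge classes on $X_{\mathbb{C}{}}$ (which is exactly why one passes to $k=\mathbb{C}{}$), and that this passage does not disturb the two hypotheses.
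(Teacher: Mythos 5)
Your $k=\mathbb{C}$ case is exactly the paper's mechanism: Conjecture \ref{t20} identifies the Hodge classes with $\mathcal{B}_{\mathrm{abs}}^{\ast}(X)$, and Theorem \ref{t21} then makes them algebraic. The divergence is in how you treat general $k$, and that is where your argument has a real soft spot. The paper never moves the Tate hypothesis: it applies Theorem \ref{t21} to $X$ over $k$ and transports only the Hodge-class side, via the inclusion $H_{\mathbb{A}}^{2\ast}(X)(\ast)\hookrightarrow H_{\mathbb{A}}^{2\ast}(X_{\mathbb{C}})(\ast)$ and the identification $\mathcal{B}_{\mathrm{abs}}^{\ast}(X)\simeq\mathcal{B}^{\ast}(X_{\mathbb{C}})$ supplied by Deligne's conjecture. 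You instead push both hypotheses up to $\mathbb{C}$. The invariance of the (arithmetically defined) Tate classes and of the span of algebraic classes under extension of the algebraically closed base field is standard folklore, though it is an extra obligation the paper's route avoids. The genuinely problematic step is your claim that Deligne's conjecture for $X$ passes to $X_{\mathbb{C}}$ ``by properties (a)--(d)''. Those properties concern classes already known to be absolutely Hodge; they say nothing about a class on $X_{\mathbb{C}}$ that is so far only known to be Hodge. Concretely, a Hodge class on $X_{\mathbb{C}}$ is not known to lie in the image of $H_{\mathbb{A}}^{2\ast}(X)(\ast)$: its de Rham component sits in $H_{\mathrm{dR}}^{\ast}(X)\otimes_{k,\sigma}\mathbb{C}$, and there is no a priori reason for it to lie in $H_{\mathrm{dR}}^{\ast}(X)$ --- that descent is part of what being absolutely Hodge (or algebraic) would give, not an input. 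So Conjecture \ref{t20} for $X$ alone does not formally yield Conjecture \ref{t20} for $X_{\mathbb{C}}$, and your reduction as written does not close.

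In fairness, the same subtlety lurks in the paper's one-line assertion that Deligne's conjecture gives $\mathcal{B}_{\mathrm{abs}}^{\ast}(X)\simeq\mathcal{B}^{\ast}(X_{\mathbb{C}})$ (surjectivity requires exactly this descent), and the intended reading of the hypothesis is that Deligne's conjecture holds for $X$ together with its extensions of scalars --- which is what is actually available in the intended application, since Theorem \ref{t26} gives the conjecture for all abelian varieties over every algebraically closed field of characteristic zero. Under that reading your argument goes through; under the literal reading, the cleaner fix is to drop the reduction to $k=\mathbb{C}$ altogether and argue as the paper does: map $\mathcal{B}_{\mathrm{abs}}^{\ast}(X)$ into $\mathcal{B}^{\ast}(X_{\mathbb{C}})$, use the (suitably interpreted) Deligne conjecture to see this map is onto, and quote Theorem \ref{t21} over $k$ itself, so that no transfer of the Tate conjecture is needed at all.
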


\begin{proof}
For any homomorphism $k\rightarrow\mathbb{C}{}$, the homomorphism
$H_{\mathbb{A}{}}^{2\ast}(X)(\ast)\hookrightarrow H_{\mathbb{A}{}}^{2\ast
}(X_{\mathbb{C}{}})(\ast)$ maps $\mathcal{B}_{\mathrm{abs}}^{\ast}(X)$ into
$\mathcal{B}{}^{\ast}(X_{\mathbb{C}{}})$. When Deligne's conjecture holds for
$X$, $\mathcal{B}_{\mathrm{abs}}^{\ast}(X)\simeq\mathcal{B}{}^{\ast
}(X_{\mathbb{C}{}})$. Therefore, if $\mathcal{B}_{\mathrm{abs}}^{\ast}(X)$
consists of algebraic classes, so also does $\mathcal{B}{}^{\ast
}(X_{\mathbb{C}{}})$.
\end{proof}

\begin{aside}
\label{t23}The Hodge conjecture is known for divisors, and the Tate conjecture
is generally expected to be true for divisors. However, there is little
evidence for either conjecture in higher codimensions, and hence little reason
to believe them. On the other hand, Deligne believes his conjecture to be
true.\footnote{When asked at the workshop, Tate said that he believes the Tate
conjecture for divisors, but that in higher codimension he has no idea. Also
it worth recalling that Hodge didn't conjecture the Hodge conjecture: he
raised it as a problem\ (\cite{hodge1952}, p184). There is considerable
evidence (and some proofs) that the classes predicted to be algebraic by the
Hodge and Tate conjectures do behave as if they are algebraic, at least in
some respects, but there is little evidence that they are actually algebraic.}
\end{aside}

\begin{aside}
\label{t23a}As Tate pointed out at the workshop, one reason the Tate
conjecture is harder than the Hodge conjecture is that it doesn't tell you
which cohomology classes are algebraic; it only tells you the $\mathbb{Q}%
{}_{\ell}$-span of the algebraic classes.
\end{aside}

\subsection{Deligne's theorem on abelian varieties}

The following is an abstract version of the main theorem of \cite{deligne1982}.

\begin{theorem}
\label{t24}Let $k$ be an algebraically closed subfield of $\mathbb{C}{}$.
Suppose that for every abelian variety $A$ over $k{}$, we have a graded
$\mathbb{Q}{}$-subalgebra $\mathcal{C}{}^{\ast}(A)$ of $\mathcal{B}{}^{\ast
}(A_{\mathbb{C}{}})$ such that

\textnf{(A1)} for every regular map $f\colon A\rightarrow B$ of abelian
varieties over $k$, $f^{\ast}$ maps $\mathcal{C}{}^{\ast}(B)$ into
$\mathcal{C}{}^{\ast}(A)$ and $f_{\ast}$ maps $\mathcal{C}{}^{\ast}(A)$ into
$\mathcal{C}{}^{\ast}(B)$;

\textnf{(A2)} for every abelian variety $A$, $\mathcal{C}{}^{1}(A)$ contains
the divisor classes; and

\textnf{(A3)} let $f\colon\mathcal{A}{}\rightarrow S$ be an abelian scheme
over a connected smooth (not necessarily complete) $k$-variety $S$, and let
$\gamma\in\Gamma(S_{\mathbb{C}{}},R^{2\ast}f_{\mathbb{C}{}\ast}\mathbb{Q}%
{}(\ast)$); if $\gamma_{t}$ is a Hodge class for all $t\in S(\mathbb{C}{})$
and $\gamma_{s}$ lies in $\mathcal{C}^{\ast}{}(A_{s})$ for one $s\in S(k)$,
then it lies in $\mathcal{C}^{\ast}(A_{s})$ for all $s\in S(k)$.

\noindent\smallskip\noindent Then $\mathcal{C}{}^{\ast}(A)\simeq\mathcal{B}%
{}^{\ast}(A_{\mathbb{C}{}})$ for all abelian varieties over $k$.
\end{theorem}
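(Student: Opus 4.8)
The plan is to carry out Deligne's argument (\cite{deligne1982}) in this abstract framework. The hypothesis already gives $\mathcal{C}^{\ast}(A)\subseteq\mathcal{B}^{\ast}(A_{\mathbb{C}})$, so what must be established is the reverse inclusion: every Hodge class on every abelian variety $A$ over $k$ lies in $\mathcal{C}^{\ast}(A)$. As $\mathcal{C}^{\ast}$ and $\mathcal{B}^{\ast}$ are graded $\mathbb{Q}$-subalgebras functorial for $f^{\ast}$ and $f_{\ast}$ along morphisms of abelian varieties, and powers of $A$ are again abelian varieties, (A1) lets us move freely between isogenous abelian varieties, to powers, and to products. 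One remark will be used repeatedly: applying (A1) to a closed immersion $B\hookrightarrow A$ of abelian varieties shows that the class of any abelian subvariety, being $f_{\ast}(1)$, lies in $\mathcal{C}^{\ast}$; together with (A2) this puts into $\mathcal{C}^{\ast}$ every class expressible as a polynomial in divisor classes and classes of abelian subvarieties, as well as all images of such classes under $f^{\ast}$ and $f_{\ast}$.

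The first substantive step is to dispose of the split Weil classes. If $B$ has an action of a CM field $E$ realizing $B$ as of Weil type, then $\bigwedge_{E}^{\bullet}H^{1}(B)$ consists of Hodge classes, and when the defining $E$-Hermitian form on $H_{1}(B,\mathbb{Q})$ is split one has an explicit geometric construction (Deligne's, later sharpened by Schoen, cf. \cite{schoen1988}) realizing the corresponding Weil class, up to a nonzero rational scalar, as a class built from divisor classes and classes of abelian subvarieties by the operations $f^{\ast}$, $f_{\ast}$ of maps of abelian varieties; hence every split Weil class lies in $\mathcal{C}^{\ast}$ by the remark above. A Weil class whose Hermitian form is not split is reduced to the split case by passing to a product $B\times B'$ with $B'$ of Weil type relative to $E$ chosen so that the two Hermitian forms add up to a split one.

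The second step is the deformation argument, which is where (A3) is used. Given $(A,\gamma)$ with $\gamma$ Hodge, let $Z$ be the component through $[A]$ of the locus in a suitable moduli space of polarized abelian varieties on which $\gamma$ remains a Hodge class. By the algebraicity of Hodge loci this is an algebraic subvariety, and since it is a component of a Shimura subvariety it is defined over $\overline{\mathbb{Q}}\subseteq k$; at its Hodge-generic point the Mumford--Tate group is the $\mathbb{Q}$-group $H$ stabilizing $\gamma$, through which the Hodge cocharacter then factors. Replacing $Z$ by a resolution if necessary, we obtain an abelian scheme $f\colon\mathcal{A}\rightarrow S$ over a connected smooth $k$-variety $S$, a point $s\in S(k)$ with $\mathcal{A}_{s}\cong A$, and a section $\tilde\gamma$ of $R^{2\ast}f_{\ast}\mathbb{Q}(\ast)$ that is Hodge at every complex point and restricts to $\gamma$ at $s$. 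At a Hodge-generic closed point $s_{0}\in S(k)$, the classification of the reductive $\mathbb{Q}$-groups arising as Mumford--Tate groups of polarizable Hodge structures of type $\{(1,0),(0,1)\}$, together with the invariant theory of those groups, shows that the Hodge tensors there are spanned by the polarization form and by Weil classes. The first step and (A2) then give $\tilde\gamma_{s_{0}}\in\mathcal{C}^{\ast}(\mathcal{A}_{s_{0}})$, and (A3), applied to $f$ and $\tilde\gamma$, propagates this along $S$ to yield $\gamma\in\mathcal{C}^{\ast}(A)$.

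The main obstacle is the structural core of the second step, together with the reductions feeding into the first: the invariant theory identifying the Hodge tensors at a Hodge-generic point as generated by the polarization and by Weil classes, and the reduction of an arbitrary Weil class to the split case. Concretely, this difficulty is concentrated in the case of abelian varieties of CM type, where the Mumford--Tate group is a torus and one must show --- arguing combinatorially with the $\Hom(E,\overline{\mathbb{Q}})$-grading of $H^{\ast}$ and using Shimura--Taniyama reciprocity --- that every Hodge class is a $\mathbb{Q}$-linear combination of images of products of divisor classes and split Weil classes; the deformation step then steers a general $(A,\gamma)$ into this case. Everything apart from this combinatorial/invariant-theoretic core is formal, relying only on (A1)--(A3) and the elementary remark of the first paragraph. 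Applied with $\mathcal{C}^{\ast}=\mathcal{B}_{\mathrm{abs}}^{\ast}$ --- which satisfies (A1)--(A3) by properties (a), (b) and by Deligne's Principle B --- the theorem recovers the assertion that every Hodge class on an abelian variety is absolutely Hodge.
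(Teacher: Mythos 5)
The paper itself gives no proof of Theorem \ref{t24}, referring instead to the endnotes of \cite{deligne1982}, so your proposal has to be measured against Deligne's argument; its skeleton (reduce to split Weil classes, use (A3) as Principle~B to move along families) is the right one, but your first step contains a fatal gap. There is no ``explicit geometric construction'' of split Weil classes as classes built from divisor classes and classes of abelian subvarieties by $f^{\ast}$ and $f_{\ast}$: such a construction would exhibit them as algebraic (indeed Lefschetz-type) classes, which is an open problem; \cite{schoen1988} handles only very special cases (this is exactly what makes Theorem \ref{t12} of the paper nontrivial), and Deligne proves nothing of this kind. In \cite{deligne1982}, 4.8, the split Weil classes are themselves treated by a deformation argument: the abelian varieties of split Weil type with fixed $(E,\dim,\phi)$ form a connected family containing one member whose Weil classes lie in the $\mathbb{Q}$-algebra generated by divisor classes; (A2) puts those in $\mathcal{C}^{\ast}$, and (A3) propagates them to every fibre. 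By replacing this second use of (A3) with a nonexistent construction you have, in effect, assumed the Hodge conjecture for split Weil classes --- note that the delicacy of these very families is the point of Question \ref{t39} in the paper.

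Your second step is also not correct as stated. Deligne's deformation does not go to a ``Hodge-generic'' point at which ``the Hodge tensors are spanned by the polarization form and by Weil classes'' --- no such classification of Hodge tensors at Hodge-generic points exists, and the claim is unproved. The deformation goes the opposite way, to special (CM) points, which are dense in the relevant connected family of Hodge structures with Mumford--Tate group contained in the generic one; (A3) then reduces the general case to CM abelian varieties, and the CM case is reduced, by the character-theoretic argument of \cite{deligne1982}, \S 5 (or the lemma of \cite{andre1992} quoted in \S 10 of this paper), to pullbacks under homomorphisms of split Weil classes on auxiliary CM abelian varieties. Your final paragraph gestures at this CM reduction, but it sits uneasily with the Hodge-generic claim, and your proposed reduction of a non-split Weil class to the split case by passing to $B\times B'$ with $\phi\oplus\phi'$ split is not the reduction used and does not obviously work: to descend from $W(B\times B')$ to $W(B)$ along the projection you need a class in $\mathcal{C}^{\ast}(B')$ pairing nontrivially with a Weil class on $B'$, which is again precisely the point at issue.
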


\noindent For the proof, see the endnotes to \cite{deligne1982}). I list three
applications of this theorem.

\begin{theorem}
\label{t25}In order to prove the Hodge conjecture for abelian varieties, it
suffices to prove the variational Hodge conjecture.
\end{theorem}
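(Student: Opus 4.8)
The plan is to apply Theorem~\ref{t24} with $k=\mathbb{C}$ and with $\mathcal{C}^{\ast}(A)$ taken to be the graded $\mathbb{Q}$-subalgebra of $\mathcal{B}^{\ast}(A_{\mathbb{C}})=\mathcal{B}^{\ast}(A)$ spanned by the classes of algebraic cycles. This subalgebra does lie in $\mathcal{B}^{\ast}(A)$, since an algebraic cycle class is rational and is of type $(0,0)$ after the Tate twist. If the three hypotheses (A1), (A2), (A3) can be verified for this choice, then Theorem~\ref{t24} gives $\mathcal{C}^{\ast}(A)=\mathcal{B}^{\ast}(A)$ for every abelian variety $A$ over $\mathbb{C}$, which is exactly the statement that every Hodge class on a complex abelian variety is algebraic, i.e.\ the Hodge conjecture for abelian varieties.

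Hypotheses (A1) and (A2) are formal. Algebraic cycle classes are compatible with pullback $f^{\ast}$ and with Gysin pushforward $f_{\ast}$ along a regular map $f\colon A\to B$ of abelian varieties, so $f^{\ast}$ carries $\mathcal{C}^{\ast}(B)$ into $\mathcal{C}^{\ast}(A)$ and $f_{\ast}$ carries $\mathcal{C}^{\ast}(A)$ into $\mathcal{C}^{\ast}(B)$; and $\mathcal{C}^{1}(A)$ contains the divisor classes by construction.

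The substance is in (A3), and this is precisely where the variational Hodge conjecture enters. Let $f\colon\mathcal{A}\to S$ be an abelian scheme over a connected smooth $\mathbb{C}$-variety $S$, and let $\gamma$ be a horizontal section of $R^{2\ast}f_{\ast}\mathbb{Q}(\ast)$ whose value $\gamma_{s_{0}}$ on one fibre $\mathcal{A}_{s_{0}}$, $s_{0}\in S(\mathbb{C})$, is algebraic. Shrinking $S$ if necessary we may assume $\mathcal{A}\to S$ is projective, so the pair $(\mathcal{A},S)$ falls within the scope of the variational Hodge conjecture; the latter then asserts that $\gamma_{s}$ is algebraic on $\mathcal{A}_{s}$ for every $s\in S(\mathbb{C})$. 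In particular the hypothesis in (A3) that $\gamma_{t}$ be a Hodge class for all $t\in S(\mathbb{C})$ is superfluous here: the conclusion of (A3) follows from the variational Hodge conjecture applied only to abelian schemes, and one gets the slightly stronger statement that the variational conjecture for abelian schemes already suffices for the Hodge conjecture for abelian varieties.

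With (A1)--(A3) in hand, Theorem~\ref{t24} finishes the proof. The only delicate points are bookkeeping: that Theorem~\ref{t24} may be invoked with $k=\mathbb{C}$ itself (legitimate, since $\mathbb{C}$ embeds in $\mathbb{C}$ by the identity), and that the abstract Betti-coefficient form of (A3) stated there really is covered by the usual formulation of the variational Hodge conjecture for smooth projective families — a matter of translating a horizontal section of the local system $R^{2\ast}f_{\ast}\mathbb{Q}(\ast)$ into a horizontal class for the Gauss--Manin connection and reducing a polarized abelian scheme to a projective morphism over a quasi-projective base. Neither is a real obstacle, so the essential input is simply the variational Hodge conjecture, plugged into (A3).
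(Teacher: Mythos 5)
Your proposal matches the paper's proof: take $\mathcal{C}^{\ast}(A)$ to be the $\mathbb{Q}$-span of algebraic cycle classes, note that (A1) and (A2) are formal, and observe that (A3) is exactly (one form of) the variational Hodge conjecture, so Theorem \ref{t24} gives the conclusion. The extra remarks about $k=\mathbb{C}$ and the redundancy of the "Hodge at every $t$" hypothesis are harmless elaborations of the same argument.
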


\begin{proof}
Take $\mathcal{C}{}^{\ast}(A)$ to be the $\mathbb{Q}{}$-subspace of
$H_{B}^{2\ast}(A_{\mathbb{C}{}})(\ast)$ spanned by the classes of algebraic
cycles on $A$. Clearly (A1) and (A2) hold, and (A3) is (one form of) the
variational Hodge conjecture.
\end{proof}

The following is the original version of the main theorem of
\cite{deligne1982}.

\begin{theorem}
\label{t26}Deligne's conjecture holds for all abelian varieties $A$ over $k$
(hence the Tate conjecture implies the Hodge conjecture for abelian varieties).
\end{theorem}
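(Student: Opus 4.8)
The plan is to derive this from the abstract Theorem~\ref{t24}, applied with the subalgebra $\mathcal{C}{}^{\ast}(A)$ taken to be $\mathcal{B}{}_{\mathrm{abs}}^{\ast}(A)$, the graded $\mathbb{Q}{}$-algebra of absolutely Hodge classes on the abelian variety $A/k$. Using the inclusion $k\hookrightarrow\mathbb{C}{}$ together with Deligne's property~(c), the canonical homomorphism $H_{\mathbb{A}{}}^{2\ast}(A)(\ast)\rightarrow H_{\mathbb{A}{}}^{2\ast}(A_{\mathbb{C}{}})(\ast)$ identifies $\mathcal{B}{}_{\mathrm{abs}}^{\ast}(A)$ with $\mathcal{B}{}_{\mathrm{abs}}^{\ast}(A_{\mathbb{C}{}})$, a $\mathbb{Q}{}$-subalgebra of $\mathcal{B}{}^{\ast}(A_{\mathbb{C}{}})$, and this is the $\mathcal{C}{}^{\ast}(A)$ to be fed to Theorem~\ref{t24}. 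Axiom~(A1) is then exactly property~(a) of $\mathcal{B}{}_{\mathrm{abs}}^{\ast}$ (stability under $f^{\ast}$ and $f_{\ast}$), and axiom~(A2) is immediate from property~(b), since divisor classes are algebraic and algebraic classes are absolutely Hodge. Granting axiom~(A3), Theorem~\ref{t24} gives $\mathcal{C}{}^{\ast}(A)=\mathcal{B}{}^{\ast}(A_{\mathbb{C}{}})$ for every abelian variety $A$ over $k$, i.e.\ $\mathcal{B}{}_{\mathrm{abs}}^{\ast}(A_{\mathbb{C}{}})=\mathcal{B}{}^{\ast}(A_{\mathbb{C}{}})$: every Hodge class on a complex abelian variety is absolutely Hodge. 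This is precisely Conjecture~\ref{t20} for abelian varieties (an arbitrary abelian variety over $\mathbb{C}{}$ is already defined over the algebraic closure in $\mathbb{C}{}$ of a finitely generated field, which is a field $k$ of the type allowed in Theorem~\ref{t24}, so no generality is lost in restricting to such $k$). It is worth noting that the substantive steps inside the proof of Theorem~\ref{t24} --- deformation of a given pair (abelian variety, Hodge class) to an abelian variety of CM type, and then the reduction of Hodge classes on a CM abelian variety to divisor classes and split Weil classes --- use only the formal axioms~(A1) and~(A2) once~(A3) is available; so the whole of the present theorem comes down to verifying~(A3) for $\mathcal{B}{}_{\mathrm{abs}}^{\ast}$.

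That verification, which I expect to be the main obstacle, is Deligne's ``Principle~B'': if $f\colon\mathcal{A}{}\rightarrow S$ is an abelian scheme over a smooth connected $k$-variety $S$ and $\gamma\in\Gamma(S_{\mathbb{C}{}},R^{2\ast}f_{\mathbb{C}{}\ast}\mathbb{Q}{}(\ast))$ is a flat global section whose fibres $\gamma_{t}$ are Hodge classes for all $t\in S(\mathbb{C}{})$, then $\gamma_{s_{0}}$ being absolutely Hodge for one $s_{0}\in S(k)$ forces $\gamma_{s}$ to be absolutely Hodge for every $s\in S(k)$. I would prove this following Deligne, the key tool being the theorem of the fixed part (the global invariant cycle theorem): since $\gamma$ is invariant under the monodromy action, it is the restriction to the fibres of a cohomology class $\tilde{\gamma}$ on a smooth compactification $\bar{\mathcal{A}}$ of $\mathcal{A}$, which exhibits the flat section as an object of ``geometric origin'' on the $k$-variety $\bar{\mathcal{A}}$. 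The Betti, de Rham and $\mathbb{A}_{f}{}$-realizations of $\gamma_{s}$ are then restrictions of the corresponding realizations of $\tilde{\gamma}$ along the $k$-immersion $\mathcal{A}{}_{s}\hookrightarrow\bar{\mathcal{A}}$; these restriction maps are defined over $k$, hence commute with every embedding $\sigma\colon k\rightarrow\mathbb{C}{}$, while over the connected base $S$ the de Rham--Betti comparison is horizontal for the Gauss--Manin connection. Combining these facts with the hypothesis that $\gamma_{t}$ is a Hodge class for \textit{every} complex point~$t$, one propagates the property ``$\gamma_{s}$ is Hodge relative to $\sigma$'' from $s=s_{0}$ to an arbitrary $s\in S(k)$, uniformly in $\sigma$, and deduces that $\gamma_{s}$ is absolutely Hodge.

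The parenthetical assertion then requires no further work: knowing that every abelian variety satisfies Deligne's conjecture, Theorem~\ref{t22} applied with $X=A$ shows that the Tate conjecture for $A$ implies the Hodge conjecture for $A_{\mathbb{C}{}}$. For the proof of the abstract Theorem~\ref{t24} itself, and for the details of Principle~B, see \cite{deligne1982} and its endnotes.
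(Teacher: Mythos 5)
Your proposal takes essentially the same route as the paper: apply Theorem~\ref{t24} with $\mathcal{C}^{\ast}(A)=\mathcal{B}_{\mathrm{abs}}^{\ast}(A)$, get (A1) and (A2) from Deligne's properties (a) and (b), and reduce everything to (A3), which the paper simply cites to Deligne 1982 (your sketch of Principle~B via the theorem of the fixed part is exactly the content of that citation). The parenthetical deduction via Theorem~\ref{t22} also matches the paper's intent, so the argument is correct as it stands.
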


\begin{proof}
Take $\mathcal{C}{}^{\ast}(A)$ to be $\mathcal{B}_{\mathrm{abs}}^{\ast}(A)$.
Clearly (A2) holds, and we have already noted that (A1) holds. That (A3) holds
is proved in Deligne 1982.
\end{proof}

The theorem implies that, for an abelian variety $A$ over an algebraically
closed field $k$ of characteristic zero, any homomorphism $k\rightarrow
\mathbb{C}{}$ defines an isomorphism $\mathcal{B}{}_{\text{abs}}^{\ast
}(A)\rightarrow\mathcal{B}{}^{\ast}(A_{\mathbb{C}{}})$. In view of this, I now
write $\mathcal{B}{}^{\ast}(A)$ for $\mathcal{B}{}_{\text{abs}}^{\ast}(A)$ and
call its elements the \emph{Hodge classes} on $A$.

\subsubsection{Motivated classes (following \cite{andre1996})}

Let $k$ be an algebraically closed${}$ field, and let $H_{W}^{\ast}$ be a Weil
cohomology theory on the varieties over $k$ with coefficient field $Q$. For a
variety $X$ over $k$, let $L$ and $\Lambda$ be the operators defined by a
hyperplane section of $X$, and define
\[
\mathcal{\mathcal{E}{}}{}^{\ast}(X)=Q{}[L,\Lambda]\cdot\mathcal{A}{}_{W}%
^{\ast}(X)\subset H_{W}^{2\ast}(X)(\ast)
\]
where $\mathcal{A}{}_{W}^{\ast}(X)$ is the $\mathbb{Q}{}$-subspace of
$H_{W}^{2\ast}(X)(\ast)$ generated by algebraic classes. Then $\mathcal{E}%
{}^{\ast}(X)$ is a graded $Q$-subalgebra of $H_{W}^{2\ast}(X)(\ast)$, but
these subalgebras are not (obviously) stable under direct images. However,
when we define%
\[
\mathcal{C}{}^{\ast}(X)=\bigcup\nolimits_{Y}p_{\ast}\mathcal{\mathcal{E}{}%
}^{\ast}(X\times Y),
\]
then $\mathcal{C}{}^{\ast}(X)$ is a graded $Q$-subalgebra of $H_{W}^{2\ast
}(X)(\ast)$, and these algebras satisfy (A1). They obviously satisfy (A2).

\begin{theorem}
\label{t27}Let $k$ be an algebraically closed subfield of $\mathbb{C}{}$, and
let $H_{W}$ be the Weil cohomology theory $X\mapsto H_{B}^{\ast}%
(X_{\mathbb{C}{}})$. For every abelian variety $A$, $\mathcal{C}{}^{\ast
}(A)=\mathcal{B}{}^{\ast}(A_{\mathbb{C}{}})$.
\end{theorem}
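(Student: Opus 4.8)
The plan is to apply Deligne's Theorem~\ref{t24} to the graded $\mathbb{Q}{}$-subalgebras $\mathcal{C}{}^{\ast}(A)$ of motivated classes for the Betti theory $H_{W}=H_{B}^{\ast}(-_{\mathbb{C}{}})$ (so $Q=\mathbb{Q}{}$), with $\mathcal{E}{}^{\ast}$ and $\mathcal{C}{}^{\ast}$ as defined just above: once (A1), (A2), (A3) are verified, the theorem gives $\mathcal{C}{}^{\ast}(A)=\mathcal{B}{}^{\ast}(A_{\mathbb{C}{}})$ for every abelian variety $A$ over $k$. Axioms (A1) and (A2) have already been recorded, so what remains is (a) the inclusion $\mathcal{C}{}^{\ast}(A)\subseteq\mathcal{B}{}^{\ast}(A_{\mathbb{C}{}})$ (needed even to state the hypotheses of \ref{t24}) and (b) the deformation axiom (A3). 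All the substance is in (A3); this is essentially the observation of \cite{andre1996} that Deligne's proof of Theorem~\ref{t26} applies \textit{verbatim} with ``absolutely Hodge'' replaced by ``motivated''.

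For (a): algebraic cycle classes are Hodge classes, so $\mathcal{A}{}_{W}^{\ast}(X)\subseteq\mathcal{B}{}^{\ast}(X_{\mathbb{C}{}})$; the operator $L$ is cup-product with the $(1,1)$-class of a hyperplane section, hence a morphism of rational Hodge structures up to a Tate twist, and by the Hard Lefschetz theorem for smooth complex projective varieties its Lefschetz adjoint $\Lambda$ is likewise a morphism of Hodge structures (it is $\mathbb{Q}{}$-rational and bihomogeneous of type $(-1,-1)$). Thus every element of $\mathbb{Q}{}[L,\Lambda]\cdot\mathcal{A}{}_{W}^{\ast}(X)$ is a Hodge class, i.e.\ $\mathcal{E}{}^{\ast}(X)\subseteq\mathcal{B}{}^{\ast}(X_{\mathbb{C}{}})$; and since a proper pushforward $p_{\ast}\colon H^{\ast}(X\times Y)\to H^{\ast}(X)$ is a morphism of Hodge structures up to twist, $p_{\ast}\mathcal{E}{}^{\ast}(X\times Y)\subseteq\mathcal{B}{}^{\ast}(X_{\mathbb{C}{}})$, so taking the union over auxiliary $Y$ gives $\mathcal{C}{}^{\ast}(X)\subseteq\mathcal{B}{}^{\ast}(X_{\mathbb{C}{}})$ for all smooth projective $X$, in particular all abelian $A$.

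For (A3): let $f\colon\mathcal{A}{}\to S$ be an abelian scheme over a connected smooth $k$-variety, let $\gamma$ be a (necessarily flat) global section of the local system $R^{2\ast}f_{\mathbb{C}{}\ast}\mathbb{Q}{}(\ast)$ over $S_{\mathbb{C}{}}$ that is Hodge at every complex point, and suppose $\gamma_{s_{0}}\in\mathcal{C}{}^{\ast}(\mathcal{A}{}_{s_{0}})$ for one $s_{0}\in S(k)$. Write $\gamma_{s_{0}}=p_{0\ast}(P(L,\Lambda)\,\mathrm{cl}(Z))$ with $Z$ an algebraic cycle on $\mathcal{A}{}_{s_{0}}\times_{k}Y_{0}$ for some smooth projective $Y_{0}/k$, $P\in\mathbb{Q}{}[L,\Lambda]$, and $p_{0}$ the projection to $\mathcal{A}{}_{s_{0}}$. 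After replacing $S$ by a connected \'{e}tale neighbourhood of a point over $s_{0}$ (shrinking so that $\mathcal{A}{}$ becomes projective over the base and carries a relative polarization), one spreads $Z$ out to a relative cycle $\mathcal{Z}{}$ on $\mathcal{A}{}\times_{k}Y_{0}$, flat over $S$, with $\mathcal{Z}{}_{s_{0}}=Z$ --- the standard argument with Zariski closures, generic flatness, and removal of the excess components in the special fibre. The relative cycle class $\mathrm{cl}(\mathcal{Z}{})$ is then a flat global section of the relevant local system; applying the relative operators $L,\Lambda$ --- flat by the Hard Lefschetz theorem in families --- and the proper pushforward $p_{\ast}$ produces a flat global section $\gamma'$ of $R^{2\ast}f_{\mathbb{C}{}\ast}\mathbb{Q}{}(\ast)$ whose value at every $s$ is, by construction, $p_{\ast}(P(L,\Lambda)\,\mathrm{cl}(\mathcal{Z}{}_{s}))\in\mathcal{C}{}^{\ast}(\mathcal{A}{}_{s})$. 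Since $\gamma$ and $\gamma'$ are flat sections of the same local system over the connected space $S_{\mathbb{C}{}}$ that agree at $s_{0}$, they coincide; hence $\gamma_{s}\in\mathcal{C}{}^{\ast}(\mathcal{A}{}_{s})$ for all $k$-points $s$ of this neighbourhood, and a routine spreading-out/Noetherian-induction argument over $S$ (using its connectedness) then gives this for all $s\in S(k)$.

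With (A1)--(A3) and $\mathcal{C}{}^{\ast}\subseteq\mathcal{B}{}^{\ast}$ in hand, Theorem~\ref{t24} yields $\mathcal{C}{}^{\ast}(A)=\mathcal{B}{}^{\ast}(A_{\mathbb{C}{}})$ for every abelian variety $A$ over $k$. The main obstacle is the deformation step (A3): making the passage from the single fibre $\mathcal{A}{}_{s_{0}}$ to the whole family legitimate, i.e.\ spreading out the cycle $Z$ correctly and propagating the operator $\Lambda$ flatly across the family --- precisely where the relative form of Hard Lefschetz (Deligne) is indispensable. Combined with Theorem~\ref{t26}, this also identifies the motivated classes on an abelian variety with its (absolutely) Hodge classes.
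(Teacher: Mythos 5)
Your overall strategy---check the hypotheses of Theorem~\ref{t24} for the family of motivated classes and then quote that theorem---is exactly the paper's route, and your preliminary points are fine: (A1), (A2) are recorded just before the statement, and your verification that $\mathcal{C}^{\ast}(X)\subseteq\mathcal{B}^{\ast}(X_{\mathbb{C}})$ (via $L$, $\Lambda$ and $p_{\ast}$ being morphisms of Hodge structures up to twist) is correct. The problem is your argument for (A3), which is where all the content lies, and there the key step is false. You claim that a cycle $Z$ on the closed fibre $\mathcal{A}_{s_{0}}\times Y_{0}$ can be ``spread out'' to a relative cycle $\mathcal{Z}$ on $\mathcal{A}\times Y_{0}$, flat over (an \'etale neighbourhood in) $S$, with $\mathcal{Z}_{s_{0}}=Z$, ``by the standard argument with Zariski closures and generic flatness''. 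That argument only works for cycles on the \emph{generic} fibre: the Zariski closure of $Z$ in the total space is $Z$ itself (it is already closed and lies over the single point $s_{0}$), and generic flatness says nothing about extending data prescribed on a special fibre. Algebraic cycles on a closed fibre of a family do not in general deform to neighbouring fibres at all, even when their cohomology classes remain Hodge classes --- that is precisely the obstruction that makes the variational Hodge conjecture a conjecture. Indeed, if your spreading-out step were valid it would apply verbatim with $\mathcal{C}^{\ast}$ replaced by the algebraic classes $\mathcal{A}^{\ast}$, i.e.\ it would prove axiom (A3) in the setting of Theorem~\ref{t25}, hence the variational Hodge conjecture and with it the Hodge conjecture for abelian varieties; so the step cannot be ``routine''.

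The paper handles (A3) by citing \cite{andre1996}, 0.5 (Andr\'e's deformation principle for motivated classes), whose proof is of a completely different nature: it does not deform cycles but works globally over $S_{\mathbb{C}}$ with the variation of Hodge structure, using the theorem of the fixed part, polarizability/semisimplicity, and --- the specifically ``motivated'' ingredient --- the fact that the Lefschetz involution (equivalently $\Lambda$) is itself motivated, so that the relevant projections onto the fixed part can be realized by motivated correspondences. If you want a self-contained proof you would need to reproduce an argument of that kind (the analogue of Deligne's Principle~B), not a cycle-theoretic spreading-out; as written, your proof of (A3) has a genuine gap and the cited monodromy-theoretic input cannot be avoided.
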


\begin{proof}
Clearly (A2) holds, and that (A3) holds is proved in \cite{andre1996}, 0.5.
\end{proof}

The elements of $\mathcal{C}^{\ast}(X)$ are called \emph{motivated classes}.

\begin{aside}
\label{t23b}As Ramakrishnan pointed out at the workshop, since proving the
Hodge conjecture is worth a million dollars and the Tate conjecture is harder,
it should be worth more.
\end{aside}

\section{Rational Tate classes}

There are by now many papers proving that, if the Tate conjecture is true,
then something else even more wonderful is true. But what if we are never able
to decide whether the Tate conjecture is true? or worse, what if it turns out
to be false? In this section, I suggest an alternative to the Tate conjecture
for varieties over finite fields, which appears to be much more accessible,
and which has some of the same consequences.

An abelian variety with sufficiently many endomorphisms over an algebraically
closed field of characteristic zero will now be called a \emph{CM abelian
variety}. Let $\mathbb{Q}{}^{\mathrm{al}}$ be the algebraic closure of
$\mathbb{Q}{}$ in $\mathbb{C}{}$. Then the functor $A\rightsquigarrow
A_{\mathbb{C}{}}$ from CM abelian varieties over $\mathbb{Q}{}^{\mathrm{al}}$
to CM abelian varieties over $\mathbb{C}{}$ is an equivalence of categories
(see, for example, \cite{milneCM}, \S 7).

Fix a $p$-adic prime $w$ of $\mathbb{Q}{}^{\mathrm{al}}$, and let
$\mathbb{F}{}$ be its residue field. It follows from the theory of N\'{e}ron
models that there is a well-defined reduction functor $A\rightsquigarrow
A_{0}$ from CM abelian varieties over $\mathbb{Q}{}^{\mathrm{al}}$ to abelian
varieties over $\mathbb{F}{}$, which the Honda-Tate theorem shows to be
surjective on isogeny classes.

Let $\mathbb{Q}{}_{w}^{^{\mathrm{al}}}$ be the completion of $\mathbb{Q}{}$ at
$w$. For a variety $X$ over $\mathbb{F}{}$, define%
\[
H_{\mathbb{A}{}}^{\ast}(X)=H_{\mathbb{A}{}_{f}}^{\ast}(X)\times H_{p}^{\ast
}(X)\text{ where }\left\{
\begin{array}
[c]{lll}%
H_{\mathbb{A}{}_{f}}^{\ast}(X) & = & \left(  \varprojlim_{m,p\nmid m}H^{\ast
}(X_{\mathrm{et}},\mathbb{Z}{}/m\mathbb{Z}{})\right)  \otimes_{\mathbb{Z}{}%
}\mathbb{Q}{}\\
H_{p}^{\ast}(X) & = & H_{\mathrm{crys}}^{\ast}(X)\otimes_{W(\mathbb{F}{}%
)}\mathbb{Q}{}_{w}^{^{\mathrm{al}}}.
\end{array}
\right.
\]
For a CM abelian variety $A$ over $\mathbb{Q}{}^{\mathrm{al}}$,%
\begin{align*}
H_{\mathbb{A}{}_{f}}^{\ast}(A_{K})\left(  \text{not-}p\right)   &  \simeq
H_{\mathbb{A}{}_{f}}^{\ast}(A_{0})\\
H_{\mathrm{dR}}^{\ast}(A)\otimes_{\mathbb{Q}{}^{\mathrm{al}}}\mathbb{Q}{}%
_{w}^{^{\mathrm{al}}}  &  \simeq H_{\mathrm{crys}}^{\ast}(A_{0})\otimes
_{W(\mathbb{F}{})}\mathbb{Q}{}_{w}^{^{\mathrm{al}}},
\end{align*}
and so there so there is a canonical (specialization) map $H_{\mathbb{A}{}%
}^{\ast}(A)\rightarrow H_{\mathbb{A}{}}^{\ast}(A_{0})$.

Let $\mathcal{S}{}$ be a class of smooth projective varieties over
$\mathbb{F}{}$ satisfying the following condition:

\begin{quote}
(*) it contains the abelian varieties and projective spaces and is closed
under disjoint unions, products, and passage to a connected component.
\end{quote}

\begin{definition}
\label{t41}A family $(\mathcal{R}{}^{\ast}(X))_{X\in\mathcal{S}{}}$ with each
$\mathcal{R}{}^{\ast}(X)$ a graded $\mathbb{Q}{}$-subalgebra of $H_{\mathbb{A}%
{}}^{2\ast}(X)(\ast)$ is a \emph{good theory of rational Tate classes} if

(R1) for all regular maps $f\colon X\rightarrow Y$ of varieties in
$\mathcal{S}{}$, $f^{\ast}$ maps $\mathcal{R}{}^{\ast}(Y)$ into $\mathcal{R}%
{}^{\ast}(X)$ and $f_{\ast}$ maps $\mathcal{R}{}^{\ast}(X)$ into
$\mathcal{R}{}^{\ast}(Y)$;

(R2) for all varieties $X$ in $\mathcal{S}{}$, $\mathcal{R}{}^{1}(X)$ contains
the divisor classes;

(R3) for all CM abelian varieties $A$ over $\mathbb{Q}{}^{\mathrm{al}}$, the
specialization map $H_{\mathbb{A}{}}^{2\ast}(A)(\ast)\rightarrow
H_{\mathbb{A}{}}^{2\ast}(A_{0})(\ast)$ sends the Hodge classes on $A$ to
elements of $\mathcal{R}{}^{\ast}(A_{0})$;

(R4) for all varieties $X$ in $\mathcal{S}{}$ and all primes $l$ (including
$l=p$), the projection $H_{\mathbb{A}{}}^{2\ast}(X)(\ast)\rightarrow
H_{l}^{2\ast}(X)(\ast)$ defines an isomorphism $\mathcal{R}{}^{\ast}%
(X)\otimes_{\mathbb{Q}{}}\mathbb{Q}{}_{l}\rightarrow\mathcal{T}_{l}^{\ast}(X)$.
\end{definition}

\noindent Thus, (R3) says that there is a diagram%
\[%
\begin{array}
[c]{ccc}%
\mathcal{B}{}^{\ast}(A) & \subset & H_{\mathbb{A}{}}^{2\ast}(A)(\ast)\\
\downarrow &  & \downarrow\\
\mathcal{R}{}^{\ast}(A) & \subset & H_{\mathbb{A}{}}^{2\ast}(A_{0})(\ast),
\end{array}
\]
and (R4) says that $\mathcal{R}{}^{\ast}(X)$ is simultaneously a $\mathbb{Q}%
{}$-structure on each of the $\mathbb{Q}{}_{l}$-spaces $\mathcal{T}{}%
_{l}^{\ast}(X)$ of Tate classes (including for $l=p$). The elements of
$\mathcal{R}{}^{\ast}(X)$ will be called the \emph{rational Tate classes on
}$X$ (for the theory $\mathcal{R}{}$).

The next theorem is an abstract version of the main theorem of
\cite{milne1999lm}.

\begin{theorem}
\label{t42}In the definition of a good theory of rational Tate classes, the
condition \textnf{(R4)} can be weakened to:\bquote\textnf{(R4*)} for all
varieties $X$ in $\mathcal{S}{}$ and all primes $l$, the projection map
$H_{\mathbb{A}{}}^{2\ast}(X)\rightarrow H_{l}^{2\ast}(X)(\ast)$ sends
$\mathcal{R}{}^{\ast}(X)$ into $\mathcal{T}{}_{\ell}^{\ast}(X)$.\equote
\end{theorem}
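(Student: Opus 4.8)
The plan is to deduce (R4) from (R1)--(R3) and (R4*): one must show that for every $X\in\mathcal S$ and every prime $l$ (including $l=p$) the projection $\hat r_l\colon\mathcal R^\ast(X)\otimes_{\mathbb Q}\mathbb Q_l\to\mathcal T_l^\ast(X)$ is an isomorphism. As $\mathcal R^\ast(X)$ is by definition a $\mathbb Q$-subspace of $H_{\mathbb A}^{2\ast}(X)(\ast)$, the maps $r_l$ ($l$ varying) have trivial common kernel, so it is enough to establish: (i) $\hat r_l$ is injective for every $l$; and (ii) $\hat r_{l_0}$ is surjective for a single $l_0$. Granting these, (i) makes $\mathcal R^\ast(X)$ finite-dimensional with $\dim_{\mathbb Q}\mathcal R^\ast(X)\le\dim_{\mathbb Q_l}\mathcal T_l^\ast(X)$ for all $l$, (ii) gives the reverse inequality at $l_0$, and comparing the two forces $\hat r_l$ to be an isomorphism for every $l$ (and, as a by-product, $\dim_{\mathbb Q_l}\mathcal T_l^\ast(X)$ to be independent of $l$).

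For (i) I would put on $\mathcal R^\ast(X)$ the $\mathbb Q$-bilinear form $\langle c,c'\rangle=f_\ast(c\cup c')$ on complementary degrees, where $f\colon X\to\mathrm{pt}$. By (R1) this takes values in $\mathcal R^0(\mathrm{pt})=\mathbb Q$, and it is compatible with the trace isomorphisms $H_l^{2\dim X}(X)(\dim X)\simeq\mathbb Q_l$; hence $\langle\ ,\ \rangle\otimes\mathbb Q_l$ is, via $\hat r_l$, the restriction to $r_l(\mathcal R^\ast(X))$ of the Poincar\'e pairing on $\mathcal T_l^\ast(X)$, and $\ker\hat r_l$ lies in its radical. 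Thus (i) reduces to the statement that the $\mathbb Q$-form $\langle\ ,\ \rangle$ is nondegenerate, i.e.\ that its radical ideal $\mathcal N^\ast(X)$ --- the $\mathcal R$-correspondences numerically equivalent to zero --- vanishes; and this, together with (ii), is exactly the assertion that numerical $\mathcal R$-equivalence and $l$-adic $\mathcal R$-equivalence agree on $\mathcal S$.

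To prove this I would pass to motives. By (R1) one obtains the category $\mathcal M_{\mathcal R}$ of rational-Tate motives over $\mathbb F$ built from $\mathcal S$, with $\Hom$-groups given by the $\mathcal R^\ast$ of products: a $\mathbb Q$-linear pseudo-abelian rigid tensor category with $\End(\1)=\mathbb Q$, with Lefschetz operators supplied by (R2). Quotienting the $\Hom$-algebras by the numerically trivial correspondences gives a category $\overline{\mathcal M}_{\mathcal R}$ whose $\Hom$-algebras are finite-dimensional $\mathbb Q$-algebras carrying a nondegenerate trace form, hence semisimple (Jannsen's argument, cf.\ \cite{jannsen1992}); with the usual sign convention all ranks are positive, so $\overline{\mathcal M}_{\mathcal R}$ is a semisimple Tannakian category on which each realization $\omega_l$ is a fibre functor. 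The essential step is then to identify $\overline{\mathcal M}_{\mathcal R}$ closely enough to conclude. On the Tannakian subcategory generated by abelian varieties I would use (R3), Deligne's theorem (Theorem~\ref{t26}, identifying the Hodge classes $\mathcal B^\ast$ of a CM abelian variety), and the surjectivity of Honda--Tate: every abelian variety over $\mathbb F$ is, up to isogeny, the reduction $A_0$ of a CM abelian variety $A/\mathbb Q^{\mathrm{al}}$, the theory of CM abelian varieties and their reductions shows that specialization carries $\mathcal B^\ast(A^n)$ onto a $\mathbb Q$-structure on $\mathcal T_l^\ast(A_0^n)$ simultaneously for all $l$, and by (R3) these specialized Hodge classes lie in $\mathcal R^\ast(A_0^n)$; squeezed between this $\mathbb Q$-structure and (via (R4*)) the $\mathcal T_l^\ast$, the algebra $\mathcal R^\ast(A_0^n)$ must coincide with it, giving (R4) --- hence $\mathcal N=0$ and the fibre-functor property --- on the abelian part. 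Finally I would propagate: for the smallest admissible $\mathcal S$ every $X$ is a connected component of a product of an abelian variety with projective spaces, whose cohomology is generated over the abelian part by algebraic classes, so the claim is immediate; in general one invokes the structure of the semisimple Tannakian category $\overline{\mathcal M}_{\mathcal R}$ --- every simple object a summand of some $h(X)$, its realizations carrying compatible Frobenius actions --- together with the fibre functor on the abelian part and the Lefschetz formalism, to force $\hat r_l$ to be an isomorphism for every $X$.

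The step I expect to be genuinely hard is this last propagation: controlling $\overline{\mathcal M}_{\mathcal R}$ --- equivalently, proving that numerical and $l$-adic $\mathcal R$-equivalence agree --- for varieties $X$ not built out of abelian varieties (already when $\mathcal S$ contains an arbitrary surface). Beyond (R4*) there is nothing given about $\mathcal R^\ast(X)$, and, in contrast to characteristic zero where $\mathcal B^\ast$ carries the positive-definite Hodge--Riemann form, in characteristic $p$ there is no intrinsic positivity to make $\langle\ ,\ \rangle$ nondegenerate directly --- the positivity has to be imported from characteristic zero through (R3), which reaches only the abelian part. Making this propagation precise is the heart of the argument, carried out in \cite{milne1999lm}.
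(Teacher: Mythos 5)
The paper does not actually contain a proof of Theorem \ref{t42}: it defers entirely to \cite{milne2007rtc}, so there is no internal argument to compare yours with; judged on its own terms, your outline has genuine gaps. First, your opening reduction is invalid: injectivity of $\mathcal{R}^{\ast}(X)\otimes_{\mathbb{Q}}\mathbb{Q}_{l}\to\mathcal{T}_{l}^{\ast}(X)$ for every $l$ plus surjectivity at a single $l_{0}$ gives an isomorphism at $l_{0}$ only; for $l\neq l_{0}$ you get $\dim_{\mathbb{Q}}\mathcal{R}^{\ast}(X)\le\dim_{\mathbb{Q}_{l}}\mathcal{T}_{l}^{\ast}(X)$ and nothing more, since the $l$-independence of $\dim\mathcal{T}_{l}^{\ast}(X)$ is part of what (R4) asserts and cannot appear as a ``by-product'' of this count. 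Second, the pivotal claim on the abelian part --- that specialization carries $\mathcal{B}^{\ast}(A^{n})$ onto a $\mathbb{Q}$-structure on $\mathcal{T}_{l}^{\ast}(A_{0}^{n})$ simultaneously for all $l$ --- is false for a fixed CM lift $A$: the Tate classes on $A_{0}^{n}$ are the classes fixed by $P(A_{0})$, which is in general strictly smaller than $\MT(A)$, so $A_{0}^{n}$ generally has many more Tate classes than specialized Hodge classes. The correct statement, that the Tate classes are generated by specialized Hodge classes of \emph{varying} CM lifts together with operations defined only over $\mathbb{F}$ (Lefschetz classes, homomorphisms that do not lift), is precisely the main theorem of \cite{milne1999lm} and \cite{andre2006b} (Theorem \ref{t30}); it is the heart of the matter, not a quotable piece of ``the theory of CM abelian varieties''.

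Third, even granted a known $\mathbb{Q}$-structure $W\subset\mathcal{R}^{\ast}(A_{0}^{n})$ on the Tate classes, your ``squeeze'' $\mathcal{R}^{\ast}=W$ does not follow: (R4*) says nothing about injectivity of $\mathcal{R}^{\ast}\otimes\mathbb{Q}_{l}\to\mathcal{T}_{l}^{\ast}$, and a $\mathbb{Q}$-subalgebra of $H_{\mathbb{A}}^{2\ast}$ containing $W$ and mapping into the Tate classes at every $l$ can still be strictly larger than $W$ --- already for $X=\mathrm{pt}$ a number field embedded in $\mathbb{A}_{f}^{p}\times\mathbb{Q}_{w}^{\mathrm{al}}$ by different embeddings at different places contains $\mathbb{Q}$ and maps into each $\mathcal{T}_{l}^{0}=\mathbb{Q}_{l}$ without being $\mathbb{Q}$. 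The same coherence problem undermines your step (i): the assertion $\mathcal{R}^{0}(\mathrm{pt})=\mathbb{Q}$ (needed for the pairing to be $\mathbb{Q}$-valued) and the nondegeneracy of that pairing are of exactly this nature, and the positivity you would need cannot be had ``directly'' --- importing it through (R3), which reaches only CM abelian varieties, and then forcing the coherence across all $l$ is precisely what the rationality/positivity arguments of \cite{milne2007rtc} (compare Theorems \ref{t31} and \ref{t36c}) accomplish. Finally, the propagation from the abelian part to arbitrary $X\in\mathcal{S}$ is, as you concede, only a gesture deferred back to the literature. So the proposal assembles several of the right ingredients, but each decisive step is either incorrect as stated or is itself one of the theorems being cited.
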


\noindent In other words, if a family satisfies (R1-3), and (R4*), then it
satisfies (R4). For the proof, see \cite{milne2007rtc}. I list three
applications of it.

Any choice of a basis for a $\mathbb{Q}{}_{l}$-vector space defines a
$\mathbb{Q}{}$-structure on the vector space. Thus, there are many choices of
$\mathbb{Q}{}$-structures on the $\mathbb{Q}{}_{l}$-spaces $\mathcal{T}{}%
_{l}^{\ast}(X)$. The next theorem says, however, that there is
\textit{exactly} one family of choices satisfying the compatibility conditions (R1--4).

\begin{theorem}
\label{t43}There exists at most one good theory of rational Tate classes on
$\mathcal{S}{}$. In other words, if $\mathcal{R}{}_{1}^{\ast}$ and
$\mathcal{R}{}_{2}^{\ast}$ are two such theories, then, for all $X\in
\mathcal{S}{}$, the $\mathbb{Q}{}$-subalgebras $\mathcal{R}{}_{1}^{\ast}(X)$
and $\mathcal{R}{}_{2}^{\ast}(X)$ of $H_{\mathbb{A}{}}^{2\ast}(X)(\ast)$ are equal.
\end{theorem}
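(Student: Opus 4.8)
The plan is to exploit Theorem~\ref{t42}---the weakening of (R4) to (R4*)---together with the elementary observation that the intersection of two good theories of rational Tate classes is again a family satisfying (R1), (R2), (R3) and (R4*). Given two good theories $\mathcal{R}{}_{1}^{\ast}$ and $\mathcal{R}{}_{2}^{\ast}$ on $\mathcal{S}{}$, I would set
\[
\mathcal{R}{}_{3}^{\ast}(X)=\mathcal{R}{}_{1}^{\ast}(X)\cap\mathcal{R}{}_{2}^{\ast}(X)\qquad(X\in\mathcal{S}{}),
\]
and show that this family is forced to coincide with each of $\mathcal{R}{}_{1}^{\ast}$ and $\mathcal{R}{}_{2}^{\ast}$.

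First I would check that $(\mathcal{R}{}_{3}^{\ast}(X))_{X\in\mathcal{S}{}}$ satisfies the hypotheses of Theorem~\ref{t42}. Each $\mathcal{R}{}_{3}^{\ast}(X)$ is a graded $\mathbb{Q}{}$-subalgebra of $H_{\mathbb{A}{}}^{2\ast}(X)(\ast)$, since the intersection of two graded subalgebras of a graded algebra is again one. Axiom (R1) is immediate, because $f^{\ast}$ and $f_{\ast}$ stabilize each of $\mathcal{R}{}_{1}^{\ast}$ and $\mathcal{R}{}_{2}^{\ast}$ and hence their intersection; (R2) holds because the divisor classes lie in $\mathcal{R}{}_{1}^{1}(X)$ and in $\mathcal{R}{}_{2}^{1}(X)$; (R3) holds because the specialization of a Hodge class on a CM abelian variety $A$ over $\mathbb{Q}{}^{\mathrm{al}}$ lands in $\mathcal{R}{}_{1}^{\ast}(A_{0})$ and in $\mathcal{R}{}_{2}^{\ast}(A_{0})$, hence in their intersection; and (R4*) holds because the projection $H_{\mathbb{A}{}}^{2\ast}(X)(\ast)\rightarrow H_{l}^{2\ast}(X)(\ast)$ already carries the larger space $\mathcal{R}{}_{1}^{\ast}(X)$ into $\mathcal{T}{}_{l}^{\ast}(X)$ by (R4) for $\mathcal{R}{}_{1}^{\ast}$. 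By Theorem~\ref{t42}, $\mathcal{R}{}_{3}^{\ast}$ is therefore a good theory; in particular it satisfies (R4), so the projection induces an isomorphism from $\mathcal{R}{}_{3}^{\ast}(X)\otimes_{\mathbb{Q}{}}\mathbb{Q}{}_{l}$ onto $\mathcal{T}{}_{l}^{\ast}(X)$ for every $X$ and every $l$.

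The rest is a dimension count. Fix $X\in\mathcal{S}{}$ and a prime $l$. The inclusion $\mathcal{R}{}_{3}^{\ast}(X)\hookrightarrow\mathcal{R}{}_{1}^{\ast}(X)$ is compatible with the projection to $H_{l}^{2\ast}(X)(\ast)$, and after $\otimes_{\mathbb{Q}{}}\mathbb{Q}{}_{l}$ both spaces map isomorphically onto the finite-dimensional $\mathbb{Q}{}_{l}$-space $\mathcal{T}{}_{l}^{\ast}(X)$---by (R4) for $\mathcal{R}{}_{3}^{\ast}$ and for $\mathcal{R}{}_{1}^{\ast}$ respectively. Hence $\mathcal{R}{}_{3}^{\ast}(X)\otimes\mathbb{Q}{}_{l}\rightarrow\mathcal{R}{}_{1}^{\ast}(X)\otimes\mathbb{Q}{}_{l}$ is an isomorphism, so $\dim_{\mathbb{Q}{}}\mathcal{R}{}_{3}^{\ast}(X)=\dim_{\mathbb{Q}{}}\mathcal{R}{}_{1}^{\ast}(X)<\infty$; together with $\mathcal{R}{}_{3}^{\ast}(X)\subseteq\mathcal{R}{}_{1}^{\ast}(X)$ this forces $\mathcal{R}{}_{3}^{\ast}(X)=\mathcal{R}{}_{1}^{\ast}(X)$. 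By symmetry $\mathcal{R}{}_{3}^{\ast}(X)=\mathcal{R}{}_{2}^{\ast}(X)$, and therefore $\mathcal{R}{}_{1}^{\ast}(X)=\mathcal{R}{}_{2}^{\ast}(X)$ for all $X\in\mathcal{S}{}$.

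In this argument all the real work is hidden in the appeal to Theorem~\ref{t42} (proved in \cite{milne2007rtc}); everything else is formal bookkeeping. The one point I would be careful about is precisely which axiom the intersection inherits for free: it is (R4*), not (R4), which is exactly why the weakened form in Theorem~\ref{t42} is needed here and is not a merely cosmetic refinement. Note also that the dual construction---taking the $\mathbb{Q}{}$-subalgebra generated by $\mathcal{R}{}_{1}^{\ast}(X)\cup\mathcal{R}{}_{2}^{\ast}(X)$---would not serve, since a sum of two $\mathbb{Q}{}$-structures on a $\mathbb{Q}{}_{l}$-vector space need not be a $\mathbb{Q}{}$-structure and (R4*) could fail; the intersection is the natural choice.
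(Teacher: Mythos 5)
Your argument is correct and is essentially the paper's own proof: intersect the two theories, observe that the intersection satisfies (R1)--(R3) and (R4*), invoke Theorem~\ref{t42} to get (R4), and then use (R4) to show a good theory contained in another must equal it. Your explicit dimension count just spells out the step the paper summarizes as ``it follows from (R4) that nested good theories coincide.''
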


\begin{proof}
It follows from (R4) that if $\mathcal{R}{}_{1}^{\ast}$ and $\mathcal{R}{}%
_{2}^{\ast}$ are both good theories of rational Tate classes and
$\mathcal{R}{}_{1}^{\ast}\subset\mathcal{R}{}_{2}^{\ast}$, then they are
equal. But if $\mathcal{R}{}_{1}^{\ast}$ and $\mathcal{R}{}_{2}^{\ast}$
satisfy (R1--4), then $\mathcal{R}_{1}^{\ast}\cap\mathcal{R}{}_{2}^{\ast}$
satisfies (R1--3) and (R4*), and hence also (R4). Therefore $\mathcal{R}{}%
_{1}^{\ast}=\mathcal{R}_{1}^{\ast}\cap\mathcal{R}{}_{2}^{\ast}=\mathcal{R}%
{}_{2}^{\ast}$.
\end{proof}

The following is the main theorem of \cite{milne1999lm}.

\begin{theorem}
\label{t44}The Hodge conjecture for CM abelian varieties implies the Tate
conjecture for abelian varieties over $\mathbb{F}{}$.
\end{theorem}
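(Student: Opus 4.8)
The plan is to reduce the theorem to the formalism of rational Tate classes. Let $\mathcal{S}$ be the class of all abelian varieties and projective spaces over $\mathbb{F}$, closed under disjoint unions, products, and passage to connected components, and assume the Hodge conjecture for CM abelian varieties. I shall construct a family $(\mathcal{R}^\ast(X))_{X\in\mathcal{S}}$, with each $\mathcal{R}^\ast(X)$ a graded $\mathbb{Q}$-subalgebra of $H_{\mathbb{A}}^{2\ast}(X)(\ast)$ spanned by classes of algebraic cycles, satisfying (R1), (R2), (R3) and the weak form (R4*). By Theorem \ref{t42} the family then satisfies (R4), and by Theorem \ref{t43} it is the unique good theory. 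But (R4) asserts that the projection $H_{\mathbb{A}}^{2\ast}(X)(\ast)\to H_\ell^{2\ast}(X)(\ast)$ induces an isomorphism $\mathcal{R}^\ast(X)\otimes_{\mathbb{Q}}\mathbb{Q}_\ell\simeq\mathcal{T}_\ell^\ast(X)$; since $\mathcal{R}^\ast(X)$ consists of algebraic classes, this says precisely that the algebraic classes span $\mathcal{T}_\ell^\ast(X)$, that is, $T^r(X,\ell)$ holds for all $r$ and $\ell$. Taking $X=A$ gives $T(A)$ for every abelian variety $A$ over $\mathbb{F}$; as (R4) also covers the prime $p$, one gets the crystalline analogue $T^r(A,p)$ as well.

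The family is built by reduction from characteristic zero. By Honda--Tate the reduction functor $A\rightsquigarrow A_0$ from CM abelian varieties over $\mathbb{Q}^{\mathrm{al}}$ to abelian varieties over $\mathbb{F}$ is surjective on isogeny classes. Under the hypothesis, every Hodge class on a CM abelian variety $A$ is the class of an algebraic cycle $Z$ on $A$; spreading $Z$ out over a localization of a number ring and reducing modulo $w$ produces an algebraic cycle on $A_0$ whose cohomology class in $H_{\mathbb{A}}^{2\ast}(A_0)(\ast)$ is the specialization of that of $Z$, by compatibility of the cycle map with specialization. One therefore sets $\mathcal{R}^\ast(X)$, for $X\in\mathcal{S}$, equal to the $\mathbb{Q}$-subalgebra of $H_{\mathbb{A}}^{2\ast}(X)(\ast)$ generated by the classes $g_\ast(\mathrm{sp}\,\beta)$, where $A$ runs over the CM abelian varieties (and projective spaces) over $\mathbb{Q}^{\mathrm{al}}$, $\beta$ over the Hodge classes on $A$, and $g$ over the morphisms $A_0\to X$. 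By construction $\mathcal{R}^\ast(X)$ lies in the span of the algebraic classes, (R2) holds, the $f_\ast$ half of (R1) holds ($f_\ast g_\ast(\mathrm{sp}\,\beta)=(f\circ g)_\ast(\mathrm{sp}\,\beta)$), and (R3) for the chosen prime $w$ is immediate. Finally (R4*) holds because an algebraic cycle on a variety over $\mathbb{F}$ has a Tate cohomology class in every $H_l^{2\ast}$: for $l\ne p$ by the definition of $\mathcal{T}_\ell^\ast$, and for $l=p$ because the crystalline cycle class is fixed by a power of the crystalline Frobenius.

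The substantive point is the other half of (R1) --- that $f^\ast$ preserves $\mathcal{R}^\ast$ --- together with the well-definedness of the construction; the difficulty is that neither a general endomorphism of an abelian variety $B$ over $\mathbb{F}$ nor its Frobenius $\pi$ lifts to characteristic zero (already a supersingular elliptic curve has $\End^0$ strictly larger than the endomorphism algebra of any CM lift). The way around this is to recast the construction Tannaka-theoretically. Under the hypothesis, the category of motives over $\mathbb{Q}^{\mathrm{al}}$ generated by CM abelian varieties, with algebraic correspondences, coincides with the one defined using Hodge classes, hence is Tannakian with group the explicitly known Serre (Taniyama) pro-torus $S$; the reduction functor sends it to the category of abelian motives over $\mathbb{F}$ and is essentially surjective on isogeny classes, and the structure theory of abelian motives over $\mathbb{F}$ (Honda--Tate, the Shimura--Taniyama formula, and the associated Weil-number pro-torus, in the spirit of the Langlands--Rapoport description) provides the canonical homomorphisms relating $S$, this pro-torus, and the Frobenius $\mathbb{G}_m$. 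Defining $\mathcal{R}^\ast(X)$ as the space of invariants of the appropriate subgroup then renders (R1) and well-definedness automatic; this is exactly where the Hodge conjecture for CM abelian varieties enters, guaranteeing that the CM side is governed by $S$ rather than by a larger group. One must also check that the crystalline realization of a Hodge class on $A$ reduces to an honest $p$-adic Tate class on $A_0$, which follows from Deligne's theorem that Hodge classes on abelian varieties are absolutely Hodge together with the comparison $H_{\mathrm{dR}}^\ast(A)\otimes\mathbb{Q}_w^{\mathrm{al}}\simeq H_{\mathrm{crys}}^\ast(A_0)\otimes\mathbb{Q}_w^{\mathrm{al}}$. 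I expect the Tannakian/Langlands--Rapoport bookkeeping --- producing a functorial $\mathcal{R}^\ast$ in spite of the non-liftability of endomorphisms and of Frobenius --- to be the main obstacle; by comparison the cycle-theoretic and $p$-adic compatibilities are routine.
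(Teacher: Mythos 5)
Your overall framework is the right one --- construct a family $(\mathcal{R}^{\ast}(X))_{X\in\mathcal{S}_{0}}$ satisfying (R1), (R2), (R3), (R4*), invoke Theorem \ref{t42} to upgrade to (R4), and read off the Tate conjecture because $\mathcal{R}^{\ast}(X)$ consists of algebraic classes. But your proposal has a genuine gap at exactly the point you flag yourself: for your choice of $\mathcal{R}^{\ast}(X)$ (the algebra generated by pushforwards $g_{\ast}(\mathrm{sp}\,\beta)$ of specialized Hodge classes from CM lifts), the pullback half of (R1) is never established. You only gesture at a Tannakian/Langlands--Rapoport mechanism that would make functoriality ``automatic,'' without constructing it, and the non-liftability of Frobenius and of general endomorphisms that you correctly identify as the obstruction is not dispatched by that sketch. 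Even (R2) is not automatic for your algebra: it is not obvious that every divisor class on an arbitrary abelian variety over $\mathbb{F}$ (e.g.\ on a power of a supersingular elliptic curve, where the N\'eron--Severi rank exceeds that of any CM lift) lies in the subalgebra you generate, and you assert it without argument.

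The difficulty evaporates with a simpler choice, which is what the paper does: take $\mathcal{R}^{\ast}(X)$ to be the $\mathbb{Q}$-subalgebra of $H_{\mathbb{A}}^{2\ast}(X)(\ast)$ spanned by \emph{all} algebraic classes on $X$. Then (R1) is standard functoriality of cycle classes under $f^{\ast}$ and $f_{\ast}$, (R2) is immediate, and (R4*) holds since algebraic classes are Tate classes in every $H_{l}$, including $l=p$. The hypothesis --- the Hodge conjecture for CM abelian varieties --- is then used precisely and only for (R3): a Hodge class on a CM abelian variety $A$ is the class of an algebraic cycle, which spreads out and specializes to an algebraic cycle on $A_{0}$, so its specialization lies in $\mathcal{R}^{\ast}(A_{0})$ (this is the spreading-out step you already describe). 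Theorem \ref{t42} then gives (R4), i.e.\ $\mathcal{R}^{\ast}(X)\otimes_{\mathbb{Q}}\mathbb{Q}_{l}\simeq\mathcal{T}_{l}^{\ast}(X)$ for all $l$, which is the Tate conjecture for abelian varieties over $\mathbb{F}$ (indeed for all $r$, all $\ell$, and $l=p$). In short: your insistence on building $\mathcal{R}^{\ast}$ out of CM reductions transfers the weight of the hypothesis from (R3), where it belongs and suffices, onto (R1), where it leaves an unproved and unnecessary functoriality problem; no lifting of endomorphisms or Frobenius is needed anywhere.
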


\begin{proof}
Let $\mathcal{S}{}_{0}$ be the smallest class satisfying (*). For
$X\in\mathcal{S}{}_{0}$, let $\mathcal{R}{}^{\ast}(X)$ be the $\mathbb{Q}{}%
$-subalgebra of $H_{\mathbb{A}{}}^{2\ast}(X)(\ast)$ spanned by the algebraic
classes. The family $(\mathcal{R}{}^{\ast}(X))_{X\in\mathcal{S}{}_{0}}$
satisfies (R1), (R2), and (R4*), and the Hodge conjecture implies that it
satisfies (R3). Therefore it satisfies (R4), which means that the Tate
conjecture holds for abelian varieties over $\mathbb{F}{}$.
\end{proof}

The following is the main theorem of \cite{andre2006b}.

\begin{theorem}
\label{t30}All Tate classes on abelian varieties over $\mathbb{F}{}$ are motivated.
\end{theorem}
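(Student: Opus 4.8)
The plan is to run the proof of Theorem~\ref{t44} essentially verbatim, with the algebraic classes everywhere replaced by the \textit{motivated} classes and the (open) Hodge conjecture for CM abelian varieties replaced by the unconditional Theorem~\ref{t27}. Let $\mathcal{S}{}_{0}$ be the smallest class of varieties over $\mathbb{F}{}$ satisfying~(*), and for $X\in\mathcal{S}{}_{0}$ let $\mathcal{R}{}^{\ast}(X)\subset H_{\mathbb{A}{}}^{2\ast}(X)(\ast)$ be the \textit{adelic motivated classes}, defined as in the motivated-class construction but with the adelic cohomology $H_{\mathbb{A}{}}$ in place of a single Weil cohomology $H_{W}^{\ast}$: thus $\mathcal{R}{}^{\ast}(X)=\bigcup\nolimits_{Y}p_{\ast}\bigl(\mathbb{Q}{}[L,\Lambda]\cdot\mathcal{A}{}^{\ast}(X\times Y)\bigr)$, where $Y$ ranges over $\mathcal{S}{}_{0}$, $\mathcal{A}{}^{\ast}$ is the $\mathbb{Q}{}$-span of the algebraic cycle classes, and $L,\Lambda$ are the Lefschetz operators of a hyperplane section. (Here $L$ is cup-product with an algebraic, hence $\mathbb{Q}{}$-rational, class, and by hard Lefschetz $\Lambda$ is a universal $\mathbb{Q}{}$-polynomial in $L$; so $\mathbb{Q}{}[L,\Lambda]$ acts $\mathbb{Q}{}$-rationally and compatibly with every realisation.) I claim $(\mathcal{R}{}^{\ast}(X))_{X\in\mathcal{S}{}_{0}}$ is a good theory of rational Tate classes. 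Granting this, Theorem~\ref{t42} gives (R4), so the projection to the $\ell$-adic realisation is an isomorphism $\mathcal{R}{}^{\ast}(X)\otimes_{\mathbb{Q}{}}\mathbb{Q}{}_{\ell}\xrightarrow{\sim}\mathcal{T}{}_{\ell}^{\ast}(X)$. Writing $\mathcal{C}{}_{\ell}^{\ast}(X)\subset H_{\ell}^{2\ast}(X)(\ast)$ for the $\mathbb{Q}{}_{\ell}$-algebra of $\ell$-adic motivated classes, the image of $\mathcal{R}{}^{\ast}(X)$ lies in $\mathcal{C}{}_{\ell}^{\ast}(X)$, so $\mathcal{T}{}_{\ell}^{\ast}(X)\subseteq\mathcal{C}{}_{\ell}^{\ast}(X)$; conversely $\mathcal{C}{}_{\ell}^{\ast}(X)\subseteq\mathcal{T}{}_{\ell}^{\ast}(X)$, since $\mathcal{C}{}_{\ell}^{\ast}(X)$ is generated by algebraic (hence Tate) classes under operations that preserve Tate classes. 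Hence $\mathcal{C}{}_{\ell}^{\ast}(X)=\mathcal{T}{}_{\ell}^{\ast}(X)$: every Tate class on $X$ is motivated. The crystalline case ($l=p$) is identical.

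It remains to verify (R1), (R2), (R3) and (R4*) for $\mathcal{R}{}^{\ast}$. Properties (R1) and (R2) are built into the construction, exactly as for motivated classes in characteristic zero: stability under $f_{\ast}$ is the reason one closes up under the projections $X\times Y\rightarrow X$, stability under $f^{\ast}$ comes from the graph correspondence, and $\mathcal{R}{}^{1}(X)$ contains the divisor classes because these are algebraic. Property (R4*) asserts that, in every realisation $l$ (including $l=p$), the generators of $\mathcal{R}{}^{\ast}(X)$ are Tate classes and the defining operators preserve Tate classes; this is immediate, since cycle classes are fixed by every Frobenius (with the appropriate Tate twist), and $L$, $\Lambda$ and the Gysin maps commute with Frobenius up to such twists.

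The substance of the argument is (R3): the specialisation map $H_{\mathbb{A}{}}^{2\ast}(A)(\ast)\rightarrow H_{\mathbb{A}{}}^{2\ast}(A_{0})(\ast)$ must carry the Hodge classes of a CM abelian variety $A$ over $\mathbb{Q}{}^{\mathrm{al}}$ into $\mathcal{R}{}^{\ast}(A_{0})$. By Theorem~\ref{t27} (and the comparison isomorphisms, which make the motivated presentation meaningful in $H_{\mathbb{A}{}}$), a Hodge class $\gamma$ on $A$ can be written $\gamma=p_{\ast}\bigl(P(L,\Lambda)\cdot\cl(Z)\bigr)$ for an algebraic cycle $Z$ on some $A\times Y$; the point that needs care is that for abelian varieties one may take $Y$ in $\mathcal{S}{}_{0}$ --- a product of abelian varieties and projective spaces --- and hence with $A$ and $Y$ of good reduction at $w$ (by Serre--Tate, a CM abelian variety over $\mathbb{Q}{}^{\mathrm{al}}$ has good reduction everywhere). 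Reducing modulo $w$: $A\times Y$ specialises to $A_{0}\times Y_{0}$; $Z$ specialises to its Zariski closure $Z_{0}$, whose class is the specialisation of $\cl(Z)$; a hyperplane section of $A\times Y$ specialises to one of $A_{0}\times Y_{0}$, so $L$ and the universal polynomial expressing $\Lambda$ specialise compatibly; and $p_{\ast}$ commutes with specialisation. Therefore $\gamma$ specialises to $p_{\ast}\bigl(P(L,\Lambda)\cdot\cl(Z_{0})\bigr)\in\mathcal{R}{}^{\ast}(A_{0})$, which is (R3).

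I expect the main obstacle to be precisely the claim invoked in the last paragraph: that a Hodge class on an abelian variety admits a motivated presentation using only abelian (and projective-space) auxiliary varieties, which over $\mathbb{Q}{}^{\mathrm{al}}$ automatically have good reduction at $w$. This is morally contained in the proof of Theorem~\ref{t27} --- the deformation hypothesis (A3) there is stated for abelian schemes, and the exotic Hodge classes on abelian varieties are cut out by families of abelian varieties --- but it has to be extracted by unwinding that proof rather than by using Theorem~\ref{t27} as a black box. A lesser, purely technical point is the compatibility of $L$, $\Lambda$ and of the primitive decomposition with the $\ell$-adic and crystalline realisation and specialisation functors, where one leans on the description of $\Lambda$ as a universal $\mathbb{Q}{}$-polynomial in $L$.
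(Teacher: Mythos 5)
Your overall skeleton is the same as the paper's: take the family of motivated classes on the smallest class $\mathcal{S}_{0}$, check (R1), (R2), (R4*), establish (R3), and invoke Theorem \ref{t42} to get (R4), whence Tate classes are motivated. (The paper does this $\ell$-adically, ``with $\mathbb{A}$ replaced by $\ell$'', rather than adelically, but that is not the issue.) The difference is that the paper does \emph{not} prove (R3): it cites Andr\'e's theorem (the main theorem of the reference given for \ref{t30}) for exactly that condition, and your attempt to prove (R3) by naive specialization has a genuine gap. Your argument needs the auxiliary variety $Y$ in the motivated presentation $\gamma=p_{\ast}\bigl(P(L,\Lambda)\cdot \mathrm{cl}(Z)\bigr)$ to have good reduction at $w$, and you assert that abelian varieties over $\mathbb{Q}^{\mathrm{al}}$ ``automatically have good reduction at $w$''. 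That is false: only abelian varieties with potential good reduction (e.g.\ CM ones, by Serre--Tate/N\'eron--Ogg--Shafarevich) reduce well; an elliptic curve over $\mathbb{Q}^{\mathrm{al}}$ with non-integral $j$-invariant never acquires good reduction at $w$. The $Y$ produced by Theorem \ref{t27} comes from Andr\'e's deformation argument through families of abelian varieties of Weil type, and there is no reason for it to be CM or to reduce well at $w$; so the ``reduce the whole presentation mod $w$'' step does not get off the ground. Making the specialization of motivated classes work in mixed characteristic --- by reducing to split Weil classes (Andr\'e 1992/Deligne) and running a deformation argument whose base and members interact with the special fibre --- is precisely the non-formal content of the cited paper, and you flag it yourself as ``the main obstacle'' without resolving it; as written, your proof of (R3) is circularly close to assuming what Andr\'e proves.

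A smaller but real error: $\Lambda$ is not ``a universal $\mathbb{Q}$-polynomial in $L$'' --- it lowers degree by $2$ while every polynomial in $L$ raises or preserves degree. Its compatibility with specialization has to be argued differently, e.g.\ because the specialization map on cohomology of a variety with good reduction is an isomorphism commuting with cup product by the hyperplane class, hence with the hard Lefschetz isomorphism and so with $\Lambda$ (or by invoking the Lefschetz standard conjecture, known for abelian varieties). This particular point is repairable; the good-reduction issue above is the one that breaks the argument.
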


\begin{proof}
Let $\mathcal{S}{}_{0}$ be the smallest class satisfying (*). Fix an $\ell$,
and for $X\in\mathcal{S}{}_{0}$, let $\mathcal{C}^{\ast}(X)$ be the
$\mathbb{Q}{}_{\ell}$-algebra of motivated classes in $H_{\ell}^{2\ast
}(X)(\ast)$. The family $(\mathcal{C}{}^{\ast}(X))_{X\in\mathcal{S}{}_{0}}$
satisfies (R1), (R2), and (R4$^{\ast}$) with $\mathbb{A}{}$ replaced by $\ell
$, and Andr\'{e} shows that it satisfies (R3). Therefore, by Theorem \ref{t42}
with $\mathbb{A}{}$ replaced by $\ell$, it satisfies (R4).
\end{proof}

\begin{aside}
\label{t34}Assume that there exists a good theory of rational Tate classes for
abelian varieties over $\mathbb{F}$. Then we expect that all Hodge classes on
all abelian varieties over $\mathbb{Q}{}^{\mathrm{al}}$ with good reduction at
$w$ (not necessarily CM) specialize to rational Tate classes. This will follow
from knowing that every $\mathbb{F}{}$-point on a Shimura variety lifts to a
special point, which is perhaps already known (\cite{zink1983},
\cite{vasiu2003cm}). Note that it implies the \textquotedblleft particularly
interesting\textquotedblright\ corollary of the Hodge conjecture noted in
\cite{deligne2006}, \S 6.
\end{aside}

\section{Reduction to the case of codimension 2}

Throughout this section, $K$ is a CM subfield of $\mathbb{C}{}$, finite and
Galois over $\mathbb{Q}{}$, and $G=\Gal(K/\mathbb{Q}{})$. Recall that an
abelian variety $A$ is said to be split by $K$ if $\End^{0}(A)\otimes
_{\mathbb{Q}{}}K$ is isomorphic to a product of matrix algebras over $K$.

\subsection{CM abelian varieties}

\begin{plain}
\label{t30a}Let $S$ be a finite left $G$-set on which $\iota$ acts without
fixed points, and let $S^{+}$ be a subset of $S$ such that $S=S^{+}\sqcup\iota
S^{+}$. Then $E\overset{\text{{\tiny def}}}{=}\Hom(S,K)^{G}$ is a
$\mathbb{Q}{}$-algebra split by $K$ such that $\Hom(E,K)\simeq S$. The
condition on $\iota$ implies that $E$ is a CM-algebra, and the condition on
$S^{+}$ implies that it is a CM-type on $E$, and so $S^{+}$ defines an
isomorphism $E\otimes_{\mathbb{Q}{}}\mathbb{C}{}\simeq\mathbb{C}{}^{S^{+}}$.
The quotient of $\mathbb{C}{}^{S^{+}}$ by any lattice in $E$ is an abelian
variety of CM-type $(E,S^{+})$. Thus, from such a pair $(S,S^{+})$ we obtain a
CM abelian variety $A(S,S^{+})$, well-defined up to isogeny, which is split by
$K$, and every such abelian variety arises in this way (up to isogeny). For
example, an abelian variety of CM-type $(E,\Phi)$ where $E$ is split by $K$ is
isogenous to $(S,\Phi)$ where $S=\Hom(E,\mathbb{Q}{})$. Note that if
$(S,S^{+})=\bigsqcup(S_{i},S_{i}^{+})$, then, by construction, $A(S,S^{\prime
})$ is isogenous to the product of the varieties $A(S_{i},S_{i}^{+})$.
\end{plain}

\begin{proposition}
\label{t30b}Let $G$ act on the set $S$ of CM-types on $K$ by the rule%
\[
g\Phi=\Phi\circ g^{-1}\overset{\text{{\tiny def}}}{=}\{\varphi\circ g^{-1}%
\mid\varphi\in\Phi\},\quad g\in G,\quad\Phi\in S,
\]
and let $S^{+}$ be the subset of CM-types containing $1_{G}$. The pair
$(S,S^{+})$ defines an abelian variety $A(S,S^{+})$, and every simple CM
abelian variety split by $K$ occurs as an isogeny factor of $A(S,S^{+})$.
\end{proposition}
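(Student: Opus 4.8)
The plan is to use the explicit combinatorial description of CM abelian varieties split by $K$ given in \ref{t30a}: up to isogeny, they correspond to pairs $(T,T^{+})$ where $T$ is a finite left $G$-set on which $\iota$ acts without fixed points and $T^{+}\subset T$ satisfies $T=T^{+}\sqcup\iota T^{+}$, with $A(T,T^{+})$ decomposing as a product according to decompositions of $T$. So it suffices to exhibit, for each such $(T,T^{+})$ with $T$ a transitive $G$-set (i.e.\ $A(T,T^{+})$ simple), an embedding of $(T,T^{+})$ into a disjoint union of copies of the pair $(S,S^{+})$ built from CM-types of $K$; equivalently, to realize $A(T,T^{+})$ as an isogeny factor of a power of $A(S,S^{+})$, hence of $A(S,S^{+})$ itself since $A(S,S^{+})$ already contains its powers' simple factors.

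First I would unwind what $(S,S^{+})$ is as a $G$-set: $S$ is the set of all CM-types $\Phi$ on $K$, with $G$ acting by $g\Phi=\Phi\circ g^{-1}$, and $\iota$ acts on $S$ by $\Phi\mapsto\iota\Phi$, which is fixed-point-free since $\iota\Phi=\Phi$ would force $\Phi$ to contain both $\varphi$ and $\iota\varphi$, contradicting the CM-type condition; and $S^{+}=\{\Phi:1_{G}\in\Phi\}$ visibly satisfies $S=S^{+}\sqcup\iota S^{+}$. Second, given a transitive $(T,T^{+})$, write $T\simeq G/H$ for a subgroup $H\le G$; the $\iota$-freeness says $\iota\notin H$ (after conjugating so that $H$ is the stabilizer of the base point), and $T^{+}$ is the choice, for each coset, of one of the two lifts distinguished by $\iota$. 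The third and central step is to produce a $G$-equivariant surjection from a disjoint union of copies of $S$ onto $T$ carrying (a sub-multiset of) $S^{+}$ onto $T^{+}$: concretely, one picks a single CM-type $\Phi_{0}$ of $K$ whose $G$-orbit, when pushed forward along $G\to G/H=T$, hits every point of $T$, and then checks that the induced map of pairs is compatible with the $\iota$-structure. This is where the work is, and I expect it to be the main obstacle: one must verify that some CM-type $\Phi_{0}$ of $K$ restricts/projects to the prescribed local data defining $T^{+}$ over the coset space $G/H$ — i.e.\ that the ``half'' of $G$ picked out by $\Phi_{0}$ can be arranged to be a union of $H$-cosets compatibly with $T^{+}$. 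Since $\iota\notin H$ one can choose a set of coset representatives $\{g_{i}\}$ for $H$ in $G$ closed under the involution $g\mapsto\iota g$ only up to swapping within pairs, and declaring $\Phi_{0}$ to consist of the chosen representatives (together with, for the cosets not meeting $T$, an arbitrary $\iota$-consistent choice) exhibits $\Phi_{0}$ as a genuine CM-type whose image in $\mathrm{Hom}(E_{T},\mathbb{Q})\simeq T$ is exactly $T^{+}$.

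Once such a $\Phi_{0}$ is found, the $G$-set map $S\to T$ sending $g\Phi_{0}\mapsto gH$ (extended $G$-equivariantly, and splitting $S$ into the orbit of $\Phi_{0}$ plus a remainder) is surjective and carries $S^{+}$ into $T^{+}$ in the appropriate sense; by the functoriality in \ref{t30a} this realizes $A(T,T^{+})$ as an isogeny factor of $A(S,S^{+})$. Finally, since an arbitrary simple CM abelian variety split by $K$ is of the form $A(T,T^{+})$ with $T$ transitive (it is simple, so $E_{T}$ is a field, so $T$ is a transitive $G$-set), this completes the proof. The only genuinely delicate point is the combinatorial lemma that a CM-type on $K$ can be chosen projecting to a prescribed fixed-point-free ``polarization'' $T^{+}$ of $G/H$; everything else is the formalism of \ref{t30a} together with the observation that $A(S,S^{+})$ absorbs finite products of its own isogeny factors.
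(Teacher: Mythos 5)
Your overall skeleton is the same as the paper's (pass from the simple variety to its CM-type $(E,\Phi)$, transport this to a CM-type on $K$, and identify the resulting $G$-orbit inside $(S,S^{+})$), but two steps are genuinely defective. First, the explicit construction of $\Phi_{0}$ you give does not produce a CM-type: a CM-type on $K$ is a subset of $G\simeq\Hom(K,\mathbb{C})$ meeting each pair $\{\varphi,\iota\varphi\}$ exactly once, so it has $\tfrac{1}{2}|G|$ elements, whereas ``one chosen representative per coset of $H$'' has only $(G\colon H)$ elements; moreover ``the cosets not meeting $T$'' is vacuous, since every coset of $H$ \emph{is} a point of $T=G/H$. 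The correct object --- which you do gesture at when you say the half of $G$ picked out by $\Phi_{0}$ should be a union of $H$-cosets lying over $T^{+}$ --- is the full preimage of $T^{+}$ under $G\rightarrow G/H$; after identifying $T$ with $\Hom(E,K)$ via $g\mapsto g\circ i$, this is exactly the induced CM-type $\Phi'=\{g\in G\mid g\circ i\in\Phi\}$ used in the paper.

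Second, and more seriously, the map from the orbit of $\Phi_{0}$ onto $T$, $g\Phi_{0}\mapsto gH$, is well defined only if $\Stab(\Phi_{0})\subset H$. With the preimage construction one gets $H\subset\Stab(\Phi_{0})$ for free, and the reverse inclusion is precisely the nontrivial point: it holds if and only if $(E,\Phi)$ is primitive, i.e., this is where the simplicity of $A$ must enter (in the paper, via the cited fact that $g\Phi'=\Phi'$ if and only if $g$ fixes $iE$). You use simplicity only to make $T$ transitive and never to control the stabilizer, so the surjection you posit need not exist as a map. Once the stabilizer equality is established, the map is in fact a $G$-equivariant \emph{bijection} carrying $T^{+}$ onto $O\cap S^{+}$, and then only the disjoint-union/product compatibility of \ref{t30a} is needed; the extra ``functoriality for surjections'' you invoke (a surjection of pairs making $A(O,O\cap S^{+})$ a power of $A(T,T^{+})$) is true for induced CM-types but is not contained in \ref{t30a} and would require its own argument, together with the preimage compatibility of the plus-parts that you leave at ``in the appropriate sense''.
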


\begin{proof}
Certainly $S$ is a finite left $G$-set on which $\iota$ acts without fixed
points, and for any CM-type $\Phi$ on $K$, exactly one of $\Phi$ or $\iota
\Phi$ contains $1_{G}$, and so $A(S,S^{\prime})$ is defined.

Let $A$ be a simple CM abelian variety split by $K$, say, of CM-type
$(E,\Phi)$. Fix a homomorphism $i\colon E\rightarrow K$, and let
\[
\Phi^{\prime}=\{g\in G\mid g\circ i\in\Phi\}
\]
--- it is a CM-type on $K$. An element $g$ of $G$ fixes $iE$ if and only if
$g\Phi^{\prime}=\Phi^{\prime}$ (see, for example, \cite{milneCM}, 1.10), and
so $g\circ i\mapsto g\Phi^{\prime}$ is a bijection of $G$-sets from
$\Hom(E,K)$ onto the orbit $O$ of $\Phi^{\prime}$ in $S$. Moreover, $1_{G}\in
g\Phi^{\prime}\overset{\text{{\tiny def}}}{=}\Phi^{\prime}\circ g^{-1}$ if and
only if $g\in\Phi^{\prime}$, i.e., $g\circ i\in\Phi$. Thus, $g\circ i\mapsto
g\Phi^{\prime}$ sends $\Phi$ onto $O\cap S^{+}$. This shows that $A$ is
isogenous to the factor $A(O,O\cap S^{+})$ of $A(S,S^{+})$.
\end{proof}

\subsection{The Hodge conjecture}

Let $2m=[K\colon\mathbb{Q}{}]$. Let $a(2^{m})$ be the hyperplane arrangement
$\{H_{1},\ldots,H_{m}\}$ in $\mathbb{R}{}^{m}$ with $H_{i}$ the coordinate
hyperplane $x_{i}=0$. The hyperplanes $H_{i}$ divide $\mathbb{R}{}%
^{m}\smallsetminus\bigcup H_{i}$ into connected regions%
\[
R(\varepsilon_{1},\ldots,\varepsilon_{m})=\{(x_{1},\ldots,x_{m})\in
\mathbb{R}{}^{m}\smallsetminus\bigcup H_{i}\mid\text{\textrm{sign}}%
(x_{i})=\varepsilon_{i}\}
\]
indexed by the set $\{\pm\}^{m}$. Let $R(2^{m})$ be the set of connected
regions, and let $R(2^{m})^{+}$ be the subset of those with $\varepsilon
_{1}=+$.

Let $\rho$ be a faithful linear representation of $G$ on $\mathbb{R}{}^{m}$
such that $G$ acts transitively on the set $a(2^{m})$ of coordinate
hyperplanes and $\rho(\iota)$ acts as $-1$ on $\mathbb{R}{}^{m}$. The pair
$(R(2^{m}),R(2^{m})^{+})$ then satisfies the conditions of (\ref{t30e}), and
so defines a CM abelian variety $A=A(G,K,\rho)$ of dimension $2^{m-1}$ split
by $K$.

The next statement is the main technical result of \cite{hazama2003}.

\begin{theorem}
\label{t30c}Let $A=A(G,K,\rho)$. For every $n\geq0$, the $\mathbb{Q}{}%
$-algebra $\mathcal{B}{}^{\ast}(A^{n})$ is generated by the classes of degree
$\leq2$.
\end{theorem}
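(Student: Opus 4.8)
The goal is to show that the Hodge ring $\mathcal{B}^{\ast}(A^{n})$ of the CM abelian variety $A = A(G,K,\rho)$ is generated in degrees $\leq 2$. Since $A$ is of CM-type, its Mumford--Tate group $\MT(A)$ is a torus, and $\mathcal{B}^{\ast}(A^{n}) = H_{B}^{2\ast}(A^{n}_{\mathbb{C}})(\ast)^{\MT(A)}$; so the whole question is one of linear algebra on the character lattice. The first step is to make this concrete: writing $S = R(2^{m})$ and $S^{+} = R(2^{m})^{+}$ as in \ref{t30a}, the cohomology $H_{B}^{1}(A_{\mathbb{C}})$ decomposes over $E \otimes \mathbb{C}$ into lines indexed by $S$, and $H^{\ast}_{B}(A^{n}_{\mathbb{C}})$ is spanned by monomials indexed by multisets of these lines; the Hodge classes correspond exactly to those monomials (and their $\mathbb{Q}$-linear combinations within a single weight-zero $\MT$-eigenspace) that are fixed by $\MT(A)$ and of type $(p,p)$. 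The Hodge condition translates into a balancing condition: the multiset must contain, for each coordinate direction, as many ``$+$'' regions as ``$-$'' regions in an appropriate sense, i.e. it must be invariant under $\rho(\iota)$ in the relevant combinatorial sense.

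**Key steps.** (1) Translate ``Hodge class'' into the combinatorial language of the hyperplane arrangement $a(2^{m})$: a monomial in the $H^{1}$-lines, indexed by a function $S \to \mathbb{Z}_{\geq 0}$ (or a formal $\mathbb{Z}$-combination of regions), is Hodge iff it is killed by every character of $\MT(A)$, which by the construction of $A$ from $\rho$ amounts to a system of linear equations saying the combination is ``symmetric'' with respect to the reflections in each $H_{i}$ and has total degree matching the codimension. (2) Show that the subalgebra of such symmetric elements is generated by the obvious symmetric elements of low degree: the divisor classes (degree $1$), coming from the polarization and the $E$-action, and a distinguished collection of degree-$2$ classes — essentially the ``graph'' classes $\sum_{\varepsilon} R(\varepsilon) \otimes R(\varepsilon')$ pairing a region with its reflection in a single coordinate hyperplane. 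The heart is a generation statement: any Hodge monomial on $A^{n}$ can be rewritten, modulo products of these degree $\leq 2$ generators, as something of strictly smaller degree, and one induces down. (3) Conclude, using that $\mathcal{B}^{\ast}$ is a graded algebra closed under the Künneth and cup-product structure (properties (a), (b) for $\mathcal{B}^{\ast}_{\mathrm{abs}}$, which for abelian varieties equals $\mathcal{B}^{\ast}$ by Theorem \ref{t26}), so it suffices to handle a set of algebra generators of the full ring of $\MT(A)$-invariants.

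**The main obstacle.** The crux is step (2): proving that the ring of $\rho(G)$-invariant (equivalently, reflection-symmetric) tensors built from the region-indexed lines is generated in degrees $\leq 2$. This is not formal — for a general CM abelian variety the Hodge ring is genuinely \emph{not} generated in low degrees (that is precisely the phenomenon of exotic Hodge/Tate classes, and cf. Theorem \ref{t12}), so the argument must use the very special structure of $(R(2^{m}), R(2^{m})^{+})$ as the region set of a \emph{Boolean} hyperplane arrangement with its coordinate-reflection symmetry. Concretely, I expect the proof to exhibit an explicit straightening algorithm on multisets of $\pm$-vectors in $\{\pm\}^{m}$: given a $\rho(\iota)$-balanced multiset of size $\geq 3$, find two elements differing in exactly one coordinate, peel them off as a degree-$2$ generator, and continue. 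The bookkeeping that this terminates and covers \emph{all} Hodge classes (including non-monomial ones living in higher-dimensional weight-zero eigenspaces, which force one to work with the full isotypic decomposition of $H^{\ast}(A^{n})$ under $G$ rather than just monomials) is the technical heart, and this is exactly what \cite{hazama2003} carries out; I would follow that computation.
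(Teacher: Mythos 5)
Your proposal is correct in substance and takes essentially the same route as the paper, whose entire proof of this theorem is the citation ``See \cite{hazama2003}, \S 7'': like the paper, you reduce to the $\MT(A)$-invariant ring of the CM torus and then defer the genuine content --- the combinatorial generation-in-degree-$\leq 2$ statement for the Boolean region set $(R(2^{m}),R(2^{m})^{+})$ --- to Hazama's computation. Since the paper supplies no independent argument, there is nothing further to compare; your sketch of the straightening step is a reasonable account of what the cited source carries out.
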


\begin{proof}
See \cite{hazama2003}, \S 7.
\end{proof}

Let $F$ be the largest totally real subfield of $K$. The choice of a CM-type
$\Phi=\{\varphi_{1},\ldots,\varphi_{m}\}$ on $K$ determines a commutative
diagram:%
\[
\begin{CD}
1 @>>> \langle\iota\rangle @>>> G @>>> \Gal(F/\mathbb{Q}{}) @>>> 1\\
@.@VV{\iota\mapsto(-,\ldots,-)}V@VV{\rho_{\Phi}}V @VVV\\
1 @>>> \{\pm\}^{m} @>>> \{\pm\}^{m}\rtimes S_{m} @>>> S_{m} @>>> 1
\end{CD}
\]
Here $\{\pm\}$ denotes the multiplicative group of order $2$, and the
symmetric group $S_{m}$ acts on $\left\{  \pm\right\}  ^{m}$ by permutating
the factors, $\ $%
\[
(\sigma\varepsilon)_{i}=\varepsilon_{\sigma^{-1}(i)},\quad\sigma\in
S_{m},\quad\varepsilon=(\varepsilon_{i})_{1\leq i\leq m}\in\{\pm\}^{m}.
\]
Let $\epsilon$ be the unique isomorphism of groups $\{\pm\}\rightarrow
\{0,1\}$. For $g\in G$, write%
\[
g\circ\varphi_{i}=\iota^{\epsilon(\varepsilon_{i})}\varphi_{\sigma^{-1}(i)}.
\]
Then $\rho_{\Phi}$ is the homomorphism $g\mapsto((\varepsilon_{i})_{i}%
,\sigma)$.

There is a natural action of $\{\pm\}^{m}\rtimes S_{m}$ on $\mathbb{R}{}^{m}$:%
\[
((\varepsilon_{i})_{1\leq i\leq m},\sigma)(x_{i})_{1\leq i\leq m}%
=(\varepsilon_{i}x_{\sigma^{-1}(i)})_{1\leq i\leq m}\text{.}%
\]
By composition, we get a linear representation of $G$ on $\mathbb{R}{}^{m}$,
also denoted $\rho_{\Phi}$. This acts transitively transitively on $a(2^{m})$,
and $\rho_{\Phi}(\iota)$ acts as $-1$.

The next statement is \cite{hazama2003}, 6.2, but the proof there is
incomplete.\footnote{It applies only to the simple subvariety of the abelian
variety of CM-type $(K,\Phi)$. Also the description (ibid. p632) of the
divisor classes is incorrect, and so the statement in Theorem 7.14 that
$A_{A(2^{n})}(G;K)$ has exotic Hodge classes when $n\geq3$ should be treated
with caution.}

\begin{proposition}
\label{t30d}Every simple CM abelian variety split by $K$ is isogenous to a
subvariety of $A(G,K,\rho_{\Phi})$.
\end{proposition}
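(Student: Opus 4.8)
The plan is to combine Proposition~\ref{t30b} with the explicit description of the representation $\rho_\Phi$ given just above the statement. Proposition~\ref{t30b} tells us that every simple CM abelian variety split by $K$ occurs as an \emph{isogeny factor} of $A(S,S^+)$, where $S$ is the set of all CM-types on $K$ with its natural $G$-action and $S^+$ consists of those containing $1_G$. To upgrade ``isogeny factor'' to ``subvariety of $A(G,K,\rho_\Phi)$'', I would first recall (from \S\ref{t30a}) that $A(S,S^+)$ is isogenous to the product $\prod_i A(O_i, O_i\cap S^+)$ over the $G$-orbits $O_i$ in $S$, and that each simple CM abelian variety split by $K$ is (up to isogeny) one of the factors $A(O,O\cap S^+)$ attached to a single orbit. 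So it suffices to show that each orbit-factor $A(O,O\cap S^+)$ is isogenous to a subvariety of $A(G,K,\rho_\Phi)$.

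The key step is to identify $A(G,K,\rho_\Phi)$, as constructed via the hyperplane arrangement $a(2^m)$, with $A(S,S^+)$ up to isogeny --- or at least to realize each orbit-factor inside it. Here I would use the commutative diagram relating $G$ to $\{\pm\}^m\rtimes S_m$: the representation $\rho_\Phi$ is precisely the composite of $\rho_\Phi\colon G\to\{\pm\}^m\rtimes S_m$ with the standard action of $\{\pm\}^m\rtimes S_m$ on $\mathbb{R}^m$. Under this identification the set $R(2^m)$ of connected chambers of the arrangement is the $G$-set $\{\pm\}^m$ (a chamber is recorded by its sign vector $(\varepsilon_1,\dots,\varepsilon_m)$), and $R(2^m)^+$ is the subset with $\varepsilon_1=+$. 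On the other hand, a CM-type $\Psi$ on $K$ is exactly a choice, for each pair $\{\varphi_i,\iota\varphi_i\}$ in the fixed CM-type $\Phi=\{\varphi_1,\dots,\varphi_m\}$ (note $2m=[K:\mathbb{Q}]$ and the $G$-orbit of $\Phi$ under $g\mapsto g\circ\Phi$ may not be all of $S$, but every CM-type is obtained by flipping signs away from $\Phi$), of one element out of the pair; that is, a CM-type is a sign vector in $\{\pm\}^m$, and this bijection is $G$-equivariant for the actions above, carrying ``$\Psi\ni 1_G$'' to ``$\varepsilon_1=+$'' after choosing the indexing so that $\varphi_1$ and $1_G$ are compatibly placed. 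Thus $(R(2^m),R(2^m)^+)\cong(S,S^+)$ as pairs of $G$-sets with the extra structure, and so $A(G,K,\rho_\Phi)$ is isogenous to $A(S,S^+)$ by the construction recalled in \S\ref{t30a}.

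Once that identification is in place, Proposition~\ref{t30b} gives that each simple CM abelian variety split by $K$ is an isogeny factor of $A(S,S^+)\sim A(G,K,\rho_\Phi)$, hence isogenous to a subvariety. The main obstacle I anticipate is the bookkeeping in the previous paragraph: one must check that the $G$-set of chambers really is isomorphic to the $G$-set of CM-types (and not just that they have the same cardinality $2^m$), and in particular that the action of $\rho_\Phi(g)$ via signed permutations matches the action $\Psi\mapsto\Psi\circ g^{-1}$ under the sign-vector bijection --- this is exactly what the displayed formula $g\circ\varphi_i=\iota^{\epsilon(\varepsilon_i)}\varphi_{\sigma^{-1}(i)}$ and the definition of $\rho_\Phi$ are set up to verify. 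A secondary point is that $S$ as defined in Proposition~\ref{t30b} is the set of \emph{all} CM-types, which matches $R(2^m)=\{\pm\}^m$ only after one knows that every CM-type is of the flip-from-$\Phi$ form; this is standard (any two CM-types differ by flipping a subset of the conjugate pairs), but should be stated. Modulo this combinatorial identification, the proposition is a direct consequence of Proposition~\ref{t30b} and the orbit-decomposition of \S\ref{t30a}; no new geometric input is needed.
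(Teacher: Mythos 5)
Your proposal is correct and is essentially the paper's own argument: the paper likewise reduces via Proposition \ref{t30b} to showing that $A(S,S^{+})$ is isogenous to $A(G,K,\rho_{\Phi})$, and does so by exactly your sign-vector bijection $\Psi\mapsto(\varepsilon_{i}(\Psi))_{1\leq i\leq m}$ (with $\varepsilon_{i}=+$ if and only if $\varphi_{i}\in\Psi$), asserted to be $G$-equivariant and to carry $S^{+}$ onto $R(2^{m})^{+}$. The bookkeeping you flag --- checking that the signed-permutation action through $\rho_{\Phi}$ matches $\Psi\mapsto\Psi\circ g^{-1}$ under this bijection, and normalizing the numbering so that $1_{G}$ plays the role of $\varphi_{1}$ --- is exactly the content the paper leaves implicit.
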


\begin{proof}
Because of Proposition \ref{t30b}, it suffices to show that $A(G,K,\rho_{\Phi
})$ is isogenous to the abelian variety $A(S,S^{+})$ of (\ref{t30a}). For a
CM-type $\Phi^{\prime}$ on $K$, let $\varepsilon_{i}(\Phi^{\prime})$ equal $+$
or $-$ according as $\varphi_{i}\in\Phi^{\prime}$ or not. Then $\Phi^{\prime
}\mapsto(\varepsilon_{i}(\Phi^{\prime}))_{1\leq i\leq m}\colon S\rightarrow
\{\pm\}^{m}\simeq R(2^{m})$ is a bijection, sending $S^{+}$ onto $R(2^{m}%
)^{+}$. This map is $G$-equivariant, and so $A(S,S^{+})$ is isogenous to
$A(R(2^{m}),R(2^{m})^{+})\overset{\text{{\tiny def}}}{=}A(G,K,\rho_{\Phi}).$
\end{proof}

The following is the main theorem of \cite{hazama2002, hazama2003}.

\begin{theorem}
\label{t30e}In order to prove the Hodge conjecture for CM abelian varieties
over $\mathbb{C}{}$, it suffices to prove it in codimension $2.$
\end{theorem}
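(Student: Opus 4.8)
The plan is to apply Theorem~\ref{t24} with the family $\mathcal{C}^{\ast}$ taken to be the $\mathbb{Q}{}$-subalgebra of $\mathcal{B}{}^{\ast}(A_{\mathbb{C}{}})$ \emph{generated by Hodge classes of degree $\leq 2$}. If this family satisfies (A1), (A2), and (A3), then Theorem~\ref{t24} gives $\mathcal{C}{}^{\ast}(A)=\mathcal{B}{}^{\ast}(A_{\mathbb{C}{}})$ for all abelian varieties $A$ over $k$; taking $k=\mathbb{Q}{}^{\mathrm{al}}$ (or directly over $\mathbb{C}{}$) this says exactly that every Hodge class on a CM abelian variety is a polynomial in Hodge classes of degree $\leq 2$. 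Combined with the known fact that the Hodge conjecture for CM abelian varieties reduces to showing every \emph{Hodge class} on such varieties is algebraic, and that Hodge classes of degree $1$ are algebraic (divisors) while Hodge classes of degree $2$ are algebraic by hypothesis, this yields the Hodge conjecture for all CM abelian varieties. (Recall also that the Hodge conjecture for all abelian varieties over $\mathbb{C}{}$ is known to reduce to the CM case, by a specialization/deformation argument of the type underlying Theorem~\ref{t24} itself, so ``CM abelian varieties'' is the genuinely general case.)

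First I would verify (A1) and (A2). Property (A2) is immediate: $\mathcal{C}{}^{1}(A)$ contains the divisor classes by construction, since these are Hodge classes of degree $1$. For (A1), the subtle point is stability under pushforward $f_{\ast}$: the subalgebra generated by low-degree classes is manifestly stable under pullback $f^{\ast}$ (a ring homomorphism), but $f_{\ast}$ is only a module map. Here I would use the structure of maps of abelian varieties — every such $f$ factors through an isogeny onto an abelian subvariety followed by an inclusion — together with the projection formula $f_{\ast}(x\cdot f^{\ast}y)=f_{\ast}(x)\cdot y$ and the fact that for the relevant building-block maps (inclusions of abelian subvarieties, multiplication maps $A^{m}\to A$, projections) the pushforward of the fundamental class, and more generally of the generating low-degree classes, is again a polynomial in low-degree Lefschetz/divisor classes. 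This is essentially the content of the companion fact (\ref{t30c}): for the specific variety $A=A(G,K,\rho_{\Phi})$, all of $\mathcal{B}{}^{\ast}(A^{n})$ is generated in degrees $\leq 2$, and by (\ref{t30d}) every simple CM abelian variety split by $K$ embeds in some such $A$, so the relevant pushforwards land in the degree-$\leq 2$-generated subalgebra. Thus (A1) follows once one knows (\ref{t30c}) and (\ref{t30d}) and is prepared to let $K$ grow.

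The main obstacle is verifying (A3), the deformation condition: given an abelian scheme $f\colon\mathcal{A}{}\to S$ over a connected smooth $k$-variety and a global section $\gamma$ of $R^{2\ast}f_{\ast}\mathbb{Q}{}(\ast)$ that is Hodge at every complex point, if $\gamma_{s}$ lies in $\mathcal{C}^{\ast}(A_{s})$ (i.e.\ is a polynomial in degree-$\leq 2$ Hodge classes) for one $s\in S(k)$, then the same holds for all $s$. The difficulty is that being ``generated in degree $\leq 2$'' is not obviously a closed or open condition in families, and one cannot simply invoke the known variational results for algebraic classes. The route I would take is: reduce to the case where the generic Mumford–Tate group is constant along $S$ (after passing to a finite cover / stratifying), so that the spaces of Hodge classes $\mathcal{B}{}^{\ast}(A_{t})$ form a local system of $\mathbb{Q}{}$-vector spaces over $S(\mathbb{C}{})$ of constant rank; then the degree-$\leq 2$ Hodge classes also form sub-local-systems, hence so does the subalgebra they generate, so the condition ``$\gamma_{t}$ lies in the degree-$\leq 2$-generated subalgebra'' is locally constant and thus constant on the connected base. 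Making this precise — controlling how the Mumford–Tate group and the low-degree Hodge classes vary, and handling the jumps in special fibres — is where the real work lies, and is precisely what Hazama carries out in \cite{hazama2002,hazama2003}; accordingly I would cite those papers for (A3) rather than reproduce the argument. Granting (A1)--(A3), Theorem~\ref{t24} finishes the proof.
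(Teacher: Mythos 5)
Your route through Theorem \ref{t24} cannot work, and the obstruction is structural. If the family $\mathcal{C}^{\ast}(A)=$ (subalgebra of $\mathcal{B}^{\ast}(A_{\mathbb{C}})$ generated by Hodge classes of degree $\leq 2$) satisfied (A1)--(A3), Theorem \ref{t24} would conclude that the Hodge ring of \textit{every} abelian variety over $k$ is generated in degree $\leq 2$. That conclusion is false, and in any case far beyond anything Hazama proves: for a general abelian sixfold of Weil type over $\mathbb{Q}^{\mathrm{al}}$ the Hodge classes in degrees $1$ and $2$ reduce to powers of the polarization, while the Weil classes in $H^{6}$ lie outside the subalgebra they generate; and already among CM abelian varieties there are degenerate ones whose exotic Hodge classes are not polynomials in classes of degree $\leq 2$ --- that is exactly why the auxiliary varieties $A(G,K,\rho_{\Phi})$ are introduced at all. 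Hence (A3) must fail for your family, and it is not ``what Hazama carries out'': Theorem \ref{t30c} is proved by an explicit computation of Mumford--Tate invariants for the special varieties $A(G,K,\rho_{\Phi})^{n}$, not by any variational argument, and your sketch of (A3) (local constancy of the degree-$\leq 2$-generated subalgebra) breaks precisely because the space of degree-$2$ Hodge classes jumps on special fibres. Your verification of (A1) is also circular: you invoke \ref{t30c} and \ref{t30d}, which, used directly, already give the theorem without Theorem \ref{t24}. The parenthetical claim that the Hodge conjecture for all abelian varieties reduces to the CM case is likewise not known and not needed here.

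For comparison, the paper's proof is short and avoids Theorem \ref{t24} entirely: assuming the codimension-$2$ case, Theorem \ref{t30c} gives the Hodge conjecture for the varieties $A(G,K,\rho_{\Phi})^{n}$ (their Hodge rings are generated in degrees $1$ and $2$, where the classes are algebraic by the divisor case and by hypothesis); Proposition \ref{t30d} says every CM abelian variety is isogenous to an abelian subvariety of some $A(G,K,\rho_{\Phi})^{n}$; and the Hodge conjecture passes to abelian subvarieties because they are isogeny factors. The asymmetry you missed is the engine of the reduction: ``generated in degree $\leq 2$'' neither holds for arbitrary CM abelian varieties nor descends to subvarieties, whereas the Hodge conjecture does descend, so one proves generation only for the big special varieties and then descends the conjecture itself.
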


\begin{proof}
If the Hodge conjecture holds in codimension $2$, then Theorem \ref{t30c}
shows that it holds for the varieties $A(G,K,\rho_{\Phi})^{n}$, but
Proposition \ref{t30d} shows that every CM abelian $A$ is isogenous to a
subvariety of $A(G,K,\rho_{\Phi})^{n}$ for some $K$ and $n$. It is easy to see
that if the Hodge conjecture holds for an abelian variety $A$, then it holds
for any abelian subvariety (because it is an isogeny factor).
\end{proof}

\subsection{The Tate conjecture}

Theorems \ref{t30e} and \ref{t44} show that, in order to prove the Tate
conjecture for abelian varieties over $\mathbb{F}{}$, it suffices to prove the
Hodge conjecture in codimension $2$. The following is a more natural statement.

\begin{theorem}
\label{t29}In order to prove the Tate conjecture for abelian varieties over
$\mathbb{F}{}$, it suffices to prove it in codimension $2$.
\end{theorem}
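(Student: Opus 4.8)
The plan is to mimic, on the $\mathbb{F}{}$-side, the reduction that Theorem~\ref{t30e} carries out on the $\mathbb{C}{}$-side, using the machinery of rational Tate classes from Theorem~\ref{t42} in place of absolutely Hodge classes. First I would take $\mathcal{S}{}_{0}$ to be the smallest class of varieties over $\mathbb{F}{}$ satisfying~(*) and, fixing an $\ell$, define for each $X\in\mathcal{S}{}_{0}$ the $\mathbb{Q}{}$-subalgebra $\mathcal{R}{}^{\ast}(X)$ of $H_{\mathbb{A}{}}^{2\ast}(X)(\ast)$ generated by the algebraic classes \emph{together with} the specializations of the (genuine) Hodge classes on CM abelian varieties lifting the relevant abelian varieties over $\mathbb{F}{}$ in degrees $\leq 2$. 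The point of adding these specialized low-degree classes is that, by the codimension-$2$ Tate conjecture hypothesis, they are in fact algebraic, so $\mathcal{R}{}^{\ast}$ is just the algebra of algebraic classes; but phrasing things this way lets me import Hazama's structural result (Theorem~\ref{t30c}), which is about Hodge classes on the specific varieties $A(G,K,\rho_{\Phi})$.

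The core of the argument runs as follows. The family $(\mathcal{R}{}^{\ast}(X))_{X\in\mathcal{S}{}_{0}}$ manifestly satisfies (R1) and (R2), and it satisfies (R4$^{\ast}$) because algebraic classes are Tate classes. It remains to verify (R3): that the specialization map sends Hodge classes on a CM abelian variety $A$ over $\mathbb{Q}{}^{\mathrm{al}}$ into $\mathcal{R}{}^{\ast}(A_{0})$. By Proposition~\ref{t30d} combined with Proposition~\ref{t30b}, any such $A$ is (after enlarging $K$) an isogeny factor of some power $A(G,K,\rho_{\Phi})^{n}$, so it suffices to check (R3) for these varieties; and by Theorem~\ref{t30c}, $\mathcal{B}{}^{\ast}\big(A(G,K,\rho_{\Phi})^{n}\big)$ is generated by its classes of degree $\leq 2$. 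Thus every Hodge class on $A(G,K,\rho_{\Phi})^{n}$ is a polynomial, under the product in $H_{\mathbb{A}{}}^{2\ast}(\ast)$, in Hodge classes of degree $1$ and $2$. Degree-$1$ Hodge classes are divisor classes, hence specialize to divisor classes, hence lie in $\mathcal{R}{}^{1}(A_{0})$ by (R2). For the degree-$2$ part I invoke the hypothesis: the codimension-$2$ Tate conjecture for abelian varieties over $\mathbb{F}{}$, together with the known equivalence statements (Theorem~\ref{t2}, and Corollary~\ref{t19} which gives $E$ in an infinite set of $\ell$), shows that the specialization of a degree-$2$ Hodge class is an algebraic class on $A_{0}$ — here one uses that the specialization of a Hodge class is a Tate class (property~(d) of $\mathcal{B}{}_{\mathrm{abs}}^{\ast}$, applied after passing between $H_{\mathbb{A}{}}$ and a single $\ell$, and the compatibility of crystalline and \'etale realizations). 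Since $\mathcal{R}{}^{\ast}(A_{0})$ is closed under products, the whole Hodge algebra specializes into $\mathcal{R}{}^{\ast}(A_{0})$, giving (R3). Theorem~\ref{t42} then upgrades (R4$^{\ast}$) to (R4), and (R4) for $l=\ell$ is precisely the Tate conjecture $T^{\ast}(X,\ell)$ for all $X\in\mathcal{S}{}_{0}$, in particular for all abelian varieties over $\mathbb{F}{}$; combined with the semisimplicity $S(A)$ (known for abelian varieties) and $E$ for a suitable $\ell$ via Corollary~\ref{t19}, one gets the full Tate conjecture $T(A)$.

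The step I expect to be the main obstacle is pinning down (R3) for the degree-$2$ Hodge classes, i.e.\ making rigorous the passage from "the codimension-$2$ Tate conjecture holds" to "the specialization of every degree-$2$ Hodge class on a CM abelian variety is algebraic on the reduction." One has to be careful that the codimension-$2$ Tate conjecture is being applied on $A_{0}$ itself (or a power), that the specialized Hodge class really is a Tate class in the sense needed (for every $l$, including $l=p$, so that one lands in $\mathcal{T}{}_{l}^{\ast}$ rather than merely in $\ell$-adic cohomology for one $\ell$), and that the $\mathbb{Q}{}$-structure furnished by algebraic classes matches up across all $l$ — which is exactly what Theorem~\ref{t42} is designed to deliver, but one must check its hypotheses (R1), (R2), (R4$^{\ast}$) are genuinely in force for the family built from specialized low-degree Hodge classes. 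A secondary technical point is the compatibility of the specialization map with products and with the realization functors, so that "generated in degree $\leq 2$" is preserved under $A\rightsquigarrow A_{0}$; this is where the formalism of $H_{\mathbb{A}{}}^{\ast}$ introduced just before Definition~\ref{t41} does the work, and I would spell it out via the two displayed isomorphisms there. Everything else — Propositions~\ref{t30b} and~\ref{t30d}, Theorem~\ref{t30c}, Theorems~\ref{t42} and~\ref{t44} — is cited as a black box.
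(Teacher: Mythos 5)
There is a genuine gap, and it sits precisely at the step you flag as the main obstacle: the verification of (R3). The codimension-$2$ Tate conjecture for abelian varieties over $\mathbb{F}{}$ (for some $\ell$) only tells you that $\mathcal{T}{}_{\ell}^{2}(A_{0}^{n})$ equals the $\mathbb{Q}{}_{\ell}$-\emph{span} of the algebraic classes; it does not tell you that the specialization $\gamma_{0}$ of an individual degree-$2$ Hodge class --- an element of $H_{\mathbb{A}{}}^{4}(A_{0}^{n})(2)$ with components at every $l$, including $l=p$ --- lies in the $\mathbb{Q}{}$-algebra $\mathcal{R}{}^{\ast}(A_{0})$ generated by algebraic classes. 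For that you would need the components at the different primes to be one and the same \emph{rational} linear combination of algebraic classes, which is exactly the rationality issue the paper isolates in the Rationality Conjecture (\ref{t35}) and warns about in (\ref{t33}): the Tate conjecture alone does not imply that the algebraic classes form a good theory of rational Tate classes (cf.\ also \ref{t23a}: the conjecture only identifies the $\mathbb{Q}{}_{\ell}$-span, not which classes are algebraic). Moreover the hypothesis of the theorem is $T^{2}$ for a \emph{single} $\ell$, so at $l=p$ and at the other primes you do not even have the span statement. Hence your family $(\mathcal{R}{}^{\ast}(X))$ cannot be shown to satisfy (R3), and Theorem \ref{t42} cannot be invoked; the argument, modelled on Theorem \ref{t44}, works there only because the Hodge conjecture makes the Hodge classes on $A$ genuinely algebraic \emph{before} specializing.

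The paper closes this gap by staying with one $\ell$ and arguing with algebraic groups rather than with $\mathbb{Q}{}$-structures. It takes $A=A(S,S^{+})$ with reduction $A_{0}$, chooses $\ell$ by Corollary \ref{t19} so that $E(A_{0},\ell)$ holds, and lets $G\subset L(A_{0})_{\mathbb{Q}{}_{\ell}}$ be the subgroup fixing all algebraic classes on all powers of $A_{0}$. The codimension-$2$ hypothesis makes $G$ fix the Tate classes in $H_{\ell}^{4}(A_{0}^{n})(2)$ (only the $\mathbb{Q}{}_{\ell}$-span is needed here), so by Hazama's Theorem \ref{t30c} it fixes all specialized Hodge classes; the decisive ingredient you are missing is then the theorem of Milne 1999 quoted in the proof, that after enlarging $K$ to contain a quadratic imaginary field one has $\MT(A)\cap L(A_{0})=P(A_{0})$ inside $L(A)$, which forces $G\subset P(A_{0})_{\mathbb{Q}{}_{\ell}}$ and hence $G$ fixes \emph{all} Tate classes; Theorem \ref{t15} (the Tannaka property, using $E(A_{0},\ell)$) then shows the Tate classes are spanned by algebraic classes, and Theorem \ref{t2} plus Lemma \ref{t29a} finishes. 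Without this group-theoretic intersection theorem (or a proof of the rationality conjecture), your route through Theorem \ref{t42} cannot be completed.
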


More precisely, we shall show that if $T^{2}(A,\ell)$ holds for all abelian
varieties $A$ over $\mathbb{F}{}$ and some $\ell$, then $T^{r}(A,\ell)$ and
$E^{r}(A,\ell)$ hold for all abelian varieties $A$ over $\mathbb{F}{}$ and all
$r$ and $\ell$. We fix a $p$-adic prime $w$ of $\mathbb{Q}{}^{\mathrm{al}}$,
and use the same notations as in \S 7.

\begin{lemma}
\label{t29a}Let $A$ be an abelian variety over $\mathbb{Q}{}^{\mathrm{al}}$.
If $A$ is split by $K$, then so also is $A_{0}$. Conversely, every abelian
variety over $\mathbb{F}{}$ split by $K$ is isogenous to an abelian variety
$A_{0}$ with $A$ split by $K$.
\end{lemma}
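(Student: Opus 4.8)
The plan is to translate "split by $K$" into a statement about the endomorphism algebra and then track what the reduction functor $A \rightsquigarrow A_0$ does to endomorphisms. Recall that $A$ is split by $K$ means $\End^0(A) \otimes_{\mathbb{Q}} K$ is a product of matrix algebras over $K$. The reduction functor $A \rightsquigarrow A_0$ from CM abelian varieties over $\mathbb{Q}^{\mathrm{al}}$ to abelian varieties over $\mathbb{F}$ is induced by passage to the special fibre of the N\'eron model; by functoriality it induces a ring homomorphism $\End^0(A) \to \End^0(A_0)$, and this map is injective (an endomorphism that is zero on the special fibre is zero on the generic fibre, since an abelian scheme has no nonzero homomorphisms to the closed fibre that die generically — reduction of endomorphisms is faithful). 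So $\End^0(A)$ embeds as a $\mathbb{Q}$-subalgebra of $\End^0(A_0)$.

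For the first assertion, suppose $A$ is split by $K$. Since $A$ is a CM abelian variety, $\End^0(A)$ contains an \'etale $\mathbb{Q}$-subalgebra $E$ of degree $2\dim A$, which we may take to be a product of CM-fields, and (after possibly enlarging $K$, but in fact $K$ is assumed to split $\End^0(A)$ hence contains all the relevant conjugates) $E \otimes_{\mathbb{Q}} K \simeq K^{[E:\mathbb{Q}]}$. First I would argue that $A_0$ already has sufficiently many endomorphisms — indeed Tate's theorem (\ref{t7}) guarantees that every abelian variety over $\mathbb{F}$ does — but more to the point, the image of $E$ under $\End^0(A)\hookrightarrow\End^0(A_0)$ is an \'etale $\mathbb{Q}$-subalgebra of $\End^0(A_0)$ of the right degree $2\dim A_0 = 2\dim A$, so it is a maximal commutative \'etale subalgebra. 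Splitting is a property of the central simple algebra components: writing $\End^0(A_0)$ as a product of matrix algebras over division algebras $D_i$ with centres $C_i$, the field $K$ splits $\End^0(A_0)$ iff $K$ splits each $D_i$, equivalently iff $K$ contains a maximal subfield of each $D_i$ (up to the Brauer-group condition, iff $[K:C_i]$ kills the class of $D_i$ and $K$ embeds). The key point is that $A_0$ is isogenous to a power of a simple abelian variety whose $\pi$-invariant already lies in (a field embeddable in) $K$ because $\pi$ — a $q$-Frobenius — generates a subfield of the CM-algebra $E$, and $E$ is split by $K$; the local invariants of the division algebra $\End^0(\text{simple factor})$ are computed (Tate, Honda–Tate) from $\ord_w(\pi)/\ord_w(q)$ at $p$-adic places, and these lie in fields split by $K$ by the construction in Remark \ref{t10d}. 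Hence $K$ splits $\End^0(A_0)$, i.e., $A_0$ is split by $K$.

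For the converse, let $B$ be an abelian variety over $\mathbb{F}$ split by $K$. By the Honda–Tate theorem the reduction functor is surjective on isogeny classes, so there is a CM abelian variety $A$ over $\mathbb{Q}^{\mathrm{al}}$ with $A_0$ isogenous to $B$; the issue is to choose $A$ so that $A$ itself is split by $K$. I would reduce to the simple case: decompose $B$ up to isogeny into simple factors $B_i$, lift each $B_i$ separately, and take $A = \prod A_i$ (splitting by $K$ is preserved under products, and the reduction of a product is the product of reductions). For $B$ simple and split by $K$, its $\pi$-invariant and the local invariants of $\End^0(B)$ are encoded by a function $f\colon W \to \mathbb{Z}$ on the $p$-adic primes of $K$ as in Remark \ref{t10d}, and the explicit construction there — or the CM-lifting theory of Honda–Tate, lifting to the CM-type $(E,\Phi)$ with $E = \Hom(S,K)^G$ as in \ref{t30a} — produces a CM abelian variety $A$ with $\End^0(A)$ containing a CM-algebra split by $K$, whose reduction is isogenous to $B$. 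Concretely, one lifts $B$ to the canonical CM lift attached to the Frobenius $\pi \in E \subset K$ (a product of CM subfields of $K$): the lift has $\End^0$ containing $K$-split CM-algebra, hence is split by $K$.

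The main obstacle is the converse direction — specifically, verifying that the CM lift can be taken with endomorphism algebra split by the \emph{same} field $K$, rather than merely by some finite extension. The clean way around this is to invoke the dictionary of \ref{t30a}–\ref{t30b}: any simple abelian variety over $\mathbb{F}$ split by $K$ is the reduction (up to isogeny) of $A(O, O\cap S^{+})$ for a suitable $G$-orbit $O$ in the set of CM-types on $K$, and $A(S,S^{+})$-type varieties are split by $K$ by construction since $\End^0(A(S,S^{+})) = \Hom(S,K)^G$ is visibly split by $K$. One must check that this lift really does reduce to the given $B$, which is precisely the content of matching the Honda–Tate parameter $f_B$ with the combinatorial data $(S,S^{+})$ — a bookkeeping exercise with $p$-adic valuations of Weil numbers that parallels Remark \ref{t10d}, and which I expect to be routine once the correspondence there is in hand.
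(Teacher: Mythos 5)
Both halves of your argument stop just short of the decisive step. In the forward direction you correctly observe that the image of $E$ is a maximal \'etale $\mathbb{Q}$-subalgebra of $\End^{0}(A_{0})$ of degree $2\dim A_{0}$ — and that observation is already the whole proof: a maximal \'etale subalgebra of a semisimple algebra contains the centre and is a splitting algebra for each simple factor, so $K$ splitting $E$ forces $K$ to split $\End^{0}(A_{0})$ (this is exactly the paper's one-line argument). Instead of drawing that conclusion, you switch to the Honda--Tate local invariants, and there the argument is incomplete: to show $K$ splits the division algebra with centre $\mathbb{Q}(\pi)$ you must verify $[K_{w}\colon\mathbb{Q}(\pi)_{v}]\cdot\mathrm{inv}_{v}\in\mathbb{Z}$, i.e.\ $f_{A_{0}}(w)\in\mathbb{Z}$ for all $w$, and you never do; your appeal to Remark \ref{t10d} runs in the wrong logical direction (there, $f_{A}(w)\in\mathbb{Z}$ is \emph{deduced from} "split by $K$", which is what you are trying to prove), and the fact that $\pi$ generates a field embeddable in $K$ gives only that the centre is split by $K$, which is necessary but not sufficient. (A further shared wrinkle: one must \emph{choose} $E$ split by $K$, e.g.\ $E=\prod E_{i}^{n_{i}}$ from the isotypic decomposition of the CM variety $A$; an arbitrary maximal \'etale subalgebra of $M_{n}(E_{i})$ need not be split by $K$, so your parenthetical "contains all the relevant conjugates" needs this choice made explicit.)

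In the converse direction you correctly identify the crux — producing a CM lift whose endomorphism algebra is split by the \emph{same} $K$ — but then you defer exactly that point as "routine bookkeeping". That bookkeeping is the entire content of this half of the lemma, and it is what the paper disposes of by citing Tate 1968, Lemme~3 (CM lifting up to isogeny with prescribed CM algebra). To complete your route you would need (i) a Brauer-theoretic step producing inside $\End^{0}(B)$ a CM algebra of degree $2\dim B$ split by $K$, using that $K$ splits $\End^{0}(B)$, and (ii) either the Honda--Tate CM-lifting theorem quoted precisely, or, on your $A(S,S^{+})$ route, a Shimura--Taniyama computation of the invariant $f$ of the reduction of each factor $A(O,O\cap S^{+})$ together with the injectivity of the parametrization $A\mapsto f_{A}$ of \ref{t10d} to conclude the isogeny with $B$. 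None of these verifications appear, so as written the converse is a plan rather than a proof; note also that "canonical lift" is not available for non-ordinary $B$, so the appeal must really be to lifting up to isogeny, not to Serre--Tate canonical lifts.
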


\begin{proof}
Let $E$ be an \'{e}tale subalgebra of $\End^{0}(A)$ such that $[E\colon
\mathbb{Q}{}]=2\dim A$. Then $E$ is a maximal \'{e}tale subalgebra of
$\End^{0}(A_{0})$. Because $E$ is split by $K$, so also is $\End^{0}(A_{0})$.
The converse follows from \cite{tate1968}, Lemme 3.
\end{proof}

\begin{proof}
[of Theorem \ref{t29}]Let $A=A(S,S^{+})$ be the abelian variety in
(\ref{t30b}), which (see \S 7) we may regard as an abelian variety over
$\mathbb{Q}^{\mathrm{al}}$. Then $A_{0}$ is an abelian variety over
$\mathbb{F}{}$ split by $K$, and every simple abelian variety over
$\mathbb{F}{}$ split by $K$ is isogenous to an abelian subvariety of $A_{0}$
(by \ref{t30b}, \ref{t29a}). The inclusion $\End^{0}(A)\hookrightarrow
\End^{0}(A_{0})$ realizes $C(A_{0})$ as a $\mathbb{Q}{}$-subalgebra of $C(A)$,
and hence defines an inclusion $L(A_{0})\rightarrow L(A)$ of Lefschetz groups
(see \S 2). Consider the diagram%
\[
\xymatrix{
\MT(A)\ar@{^{(}->}[r]&L(A)\\
P(A_{0})\ar@{-->}[u]\ar@{^{(}->}[r]&L(A_{0})\ar@{^{(}->}[u]
}
\]
in which $MT(A)$ is the Mumford-Tate group of $A$ and $P(A_{0})$ is the
smallest algebraic subgroup of $L(A_{0})$ containing a Frobenius endomorphism
of $A_{0}$. Almost by definition, $\MT(A)$ is the largest algebraic subgroup
of $L(A)$ fixing the Hodge classes in $H_{B}^{2\ast}(A_{\mathbb{C}{}}%
^{n})(\ast)$ for all $n$, and so, for any prime $\ell$, $\MT(A)_{\mathbb{Q}%
{}_{\ell}}$ is the largest algebraic subgroup of $L(A)_{\mathbb{Q}{}_{\ell}}$
fixing the Hodge classes in $H_{\ell}^{2\ast}(A^{n})(\ast)$ for all $n$. On
the other hand, the classes in $H_{\ell}^{2\ast}(A_{0}^{n})(\ast)$ fixed by
$P(A_{0})_{\mathbb{Q}{}_{\ell}}$ are exactly the Tate classes. The
specialization isomorphism $H_{\ell}^{2\ast}(A)(\ast)\rightarrow H_{\ell
}^{2\ast}(A_{0})(\ast)$ is equivariant for the homomorphism $L(A_{0}%
)\rightarrow L(A)$. As Hodge classes map to Tate classes in $H_{\ell}^{2\ast
}(A)(\ast)$ (see \S 6), they map to Tate classes in $H_{\ell}^{2\ast}%
(A_{0})(\ast)$, and so they are fixed by $P(A_{0})_{\mathbb{Q}{}_{\ell}}$.
This shows that $P\left(  A_{0}\right)  _{\mathbb{Q}{}_{\ell}}\subset
\MT(A)_{\mathbb{Q}{}_{\ell}}$ (inside $L(A)_{\mathbb{Q}{}_{\ell}}$), and so
$P(A_{0})\subset\MT(A)$ (inside $L(A)$). The following is the main technical
result of \cite{milne1999lc} (Theorem 6.1): \bquote Assume that $K$ contains a
quadratic imaginary field. Then the algebraic subgroup $\MT(A)$ and $L(A_{0})$
of $L(A)$ intersect in $P(A_{0})$.\equote

We now enlarge $K$ so that it contains a quadratic imaginary field. Corollary
\ref{t19} allows us to choose $\ell$ so that $E(A_{0},\ell)$ holds. Let $G$ be
the algebraic subgroup of $L(A_{0})_{\mathbb{Q}{}_{\ell}}$ fixing the
algebraic classes in $H_{\ell}^{2\ast}(A_{0}^{n})(\ast)$ for all $n$. If the
Tate conjecture holds in codimension $2$, then $G$ fixes the Tate classes in
$H_{\ell}^{4}(A_{0}^{n})(2)$ (all $n$); therefore (by Theorem \ref{t30c}), it
fixes the Hodge classes in $H_{\ell}^{2\ast}(A^{n})(\ast)$ (all $n$), and so
$G\subset\MT(A)_{\mathbb{Q}{}_{\ell}}\cap L(A_{0})_{\mathbb{Q}{}_{\ell}%
}=P(A_{0})_{\mathbb{Q}{}_{\ell}}$; therefore, $G$ fixes all Tate classes in
$H_{\ell}^{2\ast}(A_{0}^{n})(\ast)$ (all $n$), which shows that the space of
Tate classes in $H_{\ell}^{2\ast}(A_{0}^{n})(\ast)$ (all $n$) is spanned by
the algebraic classes (apply Theorem \ref{t15}). Therefore, the equivalent
statements in Theorem \ref{t2} hold for $A_{0}^{n}$ for all $r$ and $n$. It
follows that the same is true of every abelian subvariety of some power
$A_{0}$, which includes all abelian varieties over $\mathbb{F}{}$ split by $K$
(apply Lemma \ref{t29a}). Since every abelian variety over $\mathbb{F}{}$ is
split by some CM-field, this completes the proof.
\end{proof}

Optimists will now try to prove the Hodge conjecture in codimension $2$ for CM
abelian varieties, or, what may (or may not) be easier, the Tate conjecture in
codimension $2$ for abelian varieties over $\mathbb{F}{}$. Pessimists will try
to prove the opposite. Others may prefer to look at the questions in \S 10.

\section{The Hodge standard conjecture}

Let $k$ be an algebraically closed field, and let $H_{W}$ be a Weil cohomology
theory on the varieties over $k$. For a variety $X$ over $k$, let
$\mathcal{A}{}_{W}^{r}(X)$ be the $\mathbb{Q}{}$-subspace of $H_{W}%
^{2r}(X)(r)$ spanned by the classes of algebraic cycles. Let $\xi\in H_{W}%
^{2}(X)(1)$ be the class of a hyperplane section of $X$, and let $L\colon
H_{W}^{i}(X)\rightarrow H_{W}^{i+2}(X)(1)$ be the map $\cup\xi$. The
\emph{primitive part }$\mathcal{A}{}_{W}^{r}(X)_{\text{prim}}$ of
$\mathcal{A}{}_{W}^{r}(X)$ is defined to be%
\[
\mathcal{A}{}_{W}^{r}(X)_{\text{prim}}=\{z\in\mathcal{A}{}_{W}^{r}(X)\mid
L^{\dim(X)-2r+1}z=0\}\text{.}%
\]

\begin{conjecture}
[Hodge standard]Let $d=\dim X$. For $2r\leq d$, the symmetric bilinear form%
\[
(x,y)\mapsto(-1)^{r}x\cdot y\cdot\xi^{d-2r}\colon\mathcal{A}{}_{W}%
^{r}(X)_{\text{prim}}\times\mathcal{A}{}_{W}^{r}(X)_{\text{prim}}%
\rightarrow\mathcal{A}_{W}^{d}(X)\simeq\mathbb{Q}{}{}%
\]
is positive definite (\cite{grothendieck1969s}, \textit{Hdg(}$X$\textit{)}).
\end{conjecture}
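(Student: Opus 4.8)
The plan is to separate the cases $\mathrm{char}\,k=0$ and $\mathrm{char}\,k=p$. In characteristic zero I would deduce the statement from the classical Hodge--Riemann bilinear relations; in characteristic $p$ I would first settle $r\le 1$ by the algebraic index theorem for surfaces, and then, for $r\ge 2$, try to transport the characteristic-zero positivity by lifting $X$ and its cycles to mixed characteristic. It is this last step where the real difficulty lies.

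Suppose first that $k$ can be embedded into $\mathbb{C}{}$; this covers all of characteristic zero, since $\mathcal{A}{}_W^r(X)$ is generated by finitely many cycles, each defined over a subfield of $k$ of finite type over $\mathbb{Q}{}$, and neither the intersection numbers of algebraic cycles nor the operator $L=\cup\xi$ is affected by an extension of the (algebraically closed) base field; in particular the statement does not depend on the choice of Weil cohomology, so we may take $H_W=H_B$. Under the cycle class map, $\mathcal{A}{}_W^r(X)\otimes_{\mathbb{Q}{}}\mathbb{R}{}$ sits inside the real $(r,r)$-classes of $H^{2r}(X,\mathbb{R}{})$, and because the Lefschetz decomposition is rational and compatible with $\cup\xi$, the subspace $\mathcal{A}{}_W^r(X)_{\mathrm{prim}}\otimes_{\mathbb{Q}{}}\mathbb{R}{}$ lands inside the primitive $(r,r)$-classes. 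The Hodge--Riemann relations for the ample class $\xi$ assert exactly that $(x,y)\mapsto(-1)^r\,x\cdot y\cdot\xi^{d-2r}$ is positive definite there; restricting this form to the rational span of the algebraic classes yields the conjecture. This step is routine.

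Now suppose $\mathrm{char}\,k=p$. A Bertini argument of weak-Lefschetz type reduces to the case $d=2r$: cutting $X$ by a general hyperplane section lowers the dimension by one, the restriction $\mathcal{A}{}_W^r(X)\to\mathcal{A}{}_W^r(Y)$ is injective and preserves primitivity (by weak Lefschetz, as long as $d>2r$), and the relevant intersection number is unchanged under $i_\ast$. For $r=1$ this leaves the assertion that the intersection form on $\mathrm{NS}(S)\otimes\mathbb{Q}{}$ is negative definite on the $\xi$-orthogonal complement --- the algebraic index theorem for a surface, valid in every characteristic. For $r\ge 2$ there is no known characteristic-$p$ substitute for the Hodge--Riemann relations, and the natural recourse is lifting: if $X$ is the closed fibre of a smooth projective scheme $\mathcal{X}$ over a complete discrete valuation ring of mixed characteristic and every class in $\mathcal{A}{}_W^r(X)$ is the specialization of an algebraic class (or even just a Hodge class) on the generic fibre, then the form on $\mathcal{A}{}_W^r(X)_{\mathrm{prim}}$ agrees with its characteristic-zero counterpart and the previous paragraph applies. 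For abelian varieties the Lefschetz classes --- those in the subalgebra generated by divisor classes --- are in any case controlled by the positivity of the Rosati involution attached to a polarization, so the conjecture holds for them over any field; the obstruction is concentrated in the exotic classes, which, much as in Theorem~\ref{t30e} and its analogue over $\mathbb{F}{}$, already appear in codimension $2$.

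The main obstacle is precisely this lifting problem in codimension $\ge 2$. A general smooth projective variety over $\mathbb{F}{}$ need not lift to characteristic zero at all, and even when it lifts its exotic algebraic classes need not, so the positivity cannot simply be pulled back from $\mathbb{C}{}$. A complete proof would therefore have to establish the positivity intrinsically in characteristic $p$ --- for instance through a crystalline or $p$-adic analogue of the Hodge--Riemann inequalities, or as a by-product of the full list of standard conjectures --- and I do not see how to extract this from the results available above.
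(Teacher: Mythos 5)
You were asked about a statement that the paper itself presents as a conjecture: this is Grothendieck's Hodge standard conjecture $\mathrm{Hdg}(X)$, and the paper contains no proof of it (none is known). So there is no internal proof to compare against, and your submission, which explicitly declines to claim one, reaches the right conclusion. Your survey of the known cases is accurate: over a field embeddable in $\mathbb{C}$ the positivity follows from the Hodge--Riemann bilinear relations, after the standard reductions you indicate (independence of the Weil cohomology and of the algebraically closed base field on algebraic classes); the weak-Lefschetz/projection-formula argument does legitimately reduce to the middle-dimensional case $d=2r$; the case $r\le 1$ in characteristic $p$ is the index theorem for surfaces; and for $r\ge 2$ in characteristic $p$ the lifting strategy breaks down for exactly the reason you give --- neither $X$ nor its exotic algebraic classes need lift to characteristic zero, and no crystalline substitute for the Hodge--Riemann relations is available. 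Identifying that as the irreducible gap is the correct assessment, not a defect of your write-up.

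It is worth contrasting your lifting idea with the route the paper itself proposes around this obstruction. Rather than lifting $X$, Theorem \ref{t31} (whose proof is in the reference cited there) asserts that the positivity statement holds for any good theory of rational Tate classes in the sense of Definition \ref{t41}, and Theorem \ref{t32} then deduces $\mathrm{Hdg}(X)$ for the varieties of $\mathcal{S}$ whenever all algebraic classes are rational Tate classes; by Theorem \ref{t36c}, the existence of such a theory is equivalent to the (weak) rationality conjecture for CM abelian varieties. The characteristic-zero input that you correctly identify as indispensable enters there through condition (R3): positivity is transported not by lifting algebraic cycles on the given $X$, but by specializing Hodge classes from CM abelian varieties over $\mathbb{Q}^{\mathrm{al}}$, where the Hodge--Riemann relations apply. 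This yields the Hodge standard conjecture only conditionally --- on the rationality conjecture, or on the Hodge conjecture for CM abelian varieties (cf. Aside \ref{t33}, which stresses that even the Tate conjecture alone is not known to suffice) --- which is consistent with your diagnosis that the conjecture remains open in codimension $\ge 2$ over $\mathbb{F}$.
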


The next theorem is an abstract version of the main theorem of
\cite{milne2002p}.

\begin{theorem}
\label{t31}The Hodge standard conjecture holds for every good theory of
rational Tate classes.
\end{theorem}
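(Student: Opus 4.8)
The strategy is to derive the Hodge standard conjecture over $\mathbb{F}$ from its classical counterpart over $\mathbb{C}$, the Hodge--Riemann bilinear relations, transporting positivity along reduction maps and using (R3) as the bridge. Fix the good theory $\mathcal{R}$, a variety $X\in\mathcal{S}$ of dimension $d$, and $r$ with $2r\le d$. The pairing in question takes values in $\mathcal{R}^{d}(X)$, which by (R4) is one-dimensional over $\mathbb{Q}$ and is canonically identified with $\mathbb{Q}$ through the degree map, the class of a point having positive degree; so positive-definiteness is a statement about a rational quadratic form, checkable after $\otimes\mathbb{R}$. The guiding idea is thus to match each nonzero primitive $x\in\mathcal{R}^{r}(X)$ with a Hodge class in characteristic zero whose self-intersection against the relevant power of a hyperplane section equals, as a rational number, $x\cdot x\cdot\xi^{d-2r}$, so that the Hodge--Riemann relations pin down the sign to be $(-1)^{r}$. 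Equivalently, the aim is to show that the Weil forms $(x,y)\mapsto(-1)^{r}x\cdot y\cdot\xi^{d-2r}$ on the primitive $\mathcal{R}$-classes constitute a polarization, in the Tannakian sense, of the category of $\mathcal{R}$-motives over $\mathbb{F}$.

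First I would settle the case of abelian varieties. Let $A$ be a CM abelian variety over $\mathbb{Q}^{\mathrm{al}}$ with reduction $A_{0}$ over $\mathbb{F}$; a polarization of $A$ reduces to one on $A_{0}$, so a hyperplane class $\xi$ on $A$ reduces to a hyperplane class $\xi_{0}$ on $A_{0}$. By (R3) the specialization map $H_{\mathbb{A}}^{2\ast}(A)(\ast)\to H_{\mathbb{A}}^{2\ast}(A_{0})(\ast)$ sends the Hodge classes $\mathcal{B}^{\ast}(A)$ into $\mathcal{R}^{\ast}(A_{0})$; it is a homomorphism of graded algebras, intertwines the Lefschetz operators (hence preserves primitivity), and respects the degree identification of top cohomology with $\mathbb{Q}$. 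Therefore the Hodge standard form of a primitive Hodge class on $A_{\mathbb{C}}$ and that of its reduction are the same rational number, which by Hodge--Riemann over $\mathbb{C}$ has sign $(-1)^{r}$. To see that this accounts for all of $\mathcal{R}^{\ast}(A_{0})$, I would observe that the restriction of $\mathcal{R}$ to the smallest class $\mathcal{S}_{0}$ satisfying (*) is a good theory on $\mathcal{S}_{0}$, and that the family on $\mathcal{S}_{0}$ assigning to each abelian variety the $\mathbb{Q}$-span of reductions of Hodge classes from CM lifts satisfies (R1), (R3), (R4*), and --- by the deformation theory of divisors on abelian varieties --- (R2); by Theorem \ref{t42} it then satisfies (R4), so by uniqueness (Theorem \ref{t43}) it coincides with $\mathcal{R}$ on $\mathcal{S}_{0}$. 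Since every abelian variety over $\mathbb{F}$ is, up to isogeny, the reduction of a CM abelian variety (Honda--Tate), it follows that every $\mathcal{R}$-class on a power of an abelian variety over $\mathbb{F}$ is a $\mathbb{Q}$-linear combination of reductions of Hodge classes, whence the Hodge standard conjecture holds for all powers of all abelian varieties over $\mathbb{F}$, in every codimension; in categorical terms, the reduction $\otimes$-functor from CM motives over $\mathbb{Q}^{\mathrm{al}}$ transports the canonical (Hodge--Riemann) polarization to one on the subcategory of abelian $\mathcal{R}$-motives.

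What remains, and what I expect to be the crux, is the passage to a general $X\in\mathcal{S}$ --- say a $K3$ surface --- whose $\mathcal{R}$-motive is not built from abelian varieties, so the identification of $\mathcal{R}^{\ast}(X)$ with reductions of Hodge classes is unavailable, and where one cannot even invoke the semisimple Tannakian formalism at the outset, since semisimplicity of numerical motives would itself follow from the Hodge standard conjecture. Here I would follow \cite{milne2002p}: note first that the Lefschetz operator $\Lambda$ is an $\mathcal{R}$-correspondence on every $X\in\mathcal{S}$, being a Tate class built from the algebraic class $\xi$ via hard Lefschetz (known over $\mathbb{F}$), so the standard conjecture of Lefschetz type holds for $\mathcal{R}$-classes; then invoke the Weil conjectures, which say that the Frobenius of any model of $X$ acts on the $\mathcal{R}$-classes of every $X^{n}$ with eigenvalue $1$ and on all of $H^{\ast}$ with eigenvalues that are Weil numbers. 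Combining the polarization already obtained on the abelian part with this archimedean constraint on Frobenius, I would show --- as in \cite{milne2002p} --- that the real form of the motivic band of the category of $\mathcal{R}$-motives over $\mathbb{F}$ admits a polarization, which yields the Hodge standard conjecture for all $X\in\mathcal{S}$. Making this last step precise --- extending the polarization past the subcategory generated by abelian varieties without a circular appeal to semisimplicity --- is the delicate point, and is exactly the work carried out in \cite{milne2002p}.
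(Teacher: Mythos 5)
There is a genuine gap, in fact two, and they sit exactly where the real difficulty of this theorem lies (the paper itself only states the result and defers the proof to \cite{milne2007rtc}; but the gaps in your sketch can be identified independently of that). First, your reduction of the abelian case rests on the assertion that the family assigning to each abelian variety over $\mathbb{F}$ the $\mathbb{Q}$-span of reductions of Hodge classes from CM lifts satisfies (R1), (R2), (R3), (R4*), so that by \ref{t42} and \ref{t43} it coincides with $\mathcal{R}$ on $\mathcal{S}_0$. This is not an observation: if it were provable unconditionally it would establish, by itself, the existence of a good theory of rational Tate classes, which by Theorem \ref{t36c} is equivalent to the (open) rationality conjecture, and which the paper only knows how to deduce from the Hodge conjecture for CM abelian varieties (Theorem \ref{t44}, Aside \ref{t33}). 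Concretely, (R1) fails to be evident because a regular map of abelian varieties over $\mathbb{F}$ need not lift to the CM lifts from which the Hodge classes are reduced, so stability of your span under $f^{\ast}$ and $f_{\ast}$ is precisely the kind of rationality statement the paper treats as conjectural; and (R2) is not ``deformation theory of divisors'': for a supersingular abelian variety the specialization map on N\'eron--Severi groups from any single characteristic-zero lift is far from surjective, so divisor classes over $\mathbb{F}$ do not sit in any one lift, and whether the span over all lifts has the required properties is the delicate point.

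Second, even granting that $\mathcal{R}^{r}(A_0)_{\mathrm{prim}}$ is spanned by reductions of primitive Hodge classes, your positivity argument only shows $(-1)^{r}x\cdot x\cdot\xi^{d-2r}>0$ for those $x$ that are reductions of a single Hodge class on a single CM lift. A quadratic form that is positive on a spanning set need not be positive definite, and a general element of $\mathcal{R}^{r}(A_0)_{\mathrm{prim}}$ is a $\mathbb{Q}$-linear combination of classes coming from different, mutually incompatible lifts of $A_0$ (again the supersingular case is the obstruction: the full space of Tate classes never lifts to one characteristic-zero abelian variety). So the Hodge--Riemann relations cannot be applied to it upstairs, and the central positivity step is missing. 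This is exactly why the actual argument of \cite{milne2002p}/\cite{milne2007rtc} is not a specialization of Hodge--Riemann class by class but a Tannakian polarization argument (Weil forms, comparison with positivity known for Lefschetz classes), as you yourself anticipate in your last paragraph --- which, however, openly delegates that step, together with the entire non-abelian case of $X\in\mathcal{S}$, to the cited paper. As it stands, the proposal identifies the right ingredients (R3 as the bridge to characteristic zero, Hodge--Riemann, the Tannakian formulation) but does not supply the two steps that constitute the proof.
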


In more detail, let $\mathcal{(\mathcal{R}{}}^{\ast}(X))_{X\in\mathcal{S}{}}$
be a good theory of rational Tate classes. For $X\in\mathcal{S}{}$, the
cohomology class $\xi$ of a hyperplane section of $X$ lies in $\mathcal{R}%
{}^{1}(X)$, and we can define $\mathcal{R}{}^{r}(X)_{\text{prim}}$ and the
pairing on $\mathcal{R}^{r}(X)_{\text{prim}}$ by the above formulas. The
theorem states that this pairing%
\[
\mathcal{R}^{r}(X)_{\text{prim}}\times\mathcal{R}^{r}(X)_{\text{prim}%
}\rightarrow\mathbb{Q}{}%
\]
is positive definite.

For the proof, see \cite{milne2007rtc}. I list one application of this theorem.

\begin{theorem}
\label{t32}If there exists a good theory of rational Tate classes for which
all algebraic classes are rational Tate classes, then the Hodge standard
conjecture holds.
\end{theorem}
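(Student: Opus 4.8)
The plan is to deduce Theorem~\ref{t32} directly from Theorem~\ref{t31} by restricting a positive-definite form to a subspace. The Hodge standard conjecture, in the form stated in \S 9, asserts positive-definiteness of the primitive pairing on the space $\mathcal{A}_W^r(X)_{\text{prim}}$ of \textbf{algebraic} primitive classes, whereas Theorem~\ref{t31} already gives positive-definiteness of the primitive pairing on the a priori larger space $\mathcal{R}^r(X)_{\text{prim}}$ of primitive \textbf{rational Tate} classes. Under the hypothesis that every algebraic class is a rational Tate class, the former space sits inside the latter, and a positive-definite symmetric bilinear form restricts to a positive-definite form on any subspace.

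In detail, I would argue as follows. Let $\mathcal{R}=(\mathcal{R}^{\ast}(X))_{X\in\mathcal{S}}$ be a good theory of rational Tate classes for which all algebraic classes are rational Tate classes. Fix a connected $X\in\mathcal{S}$ of dimension $d$, an integer $r$ with $2r\le d$, and a prime $l$ (possibly $l=p$); it suffices to prove the Hodge standard conjecture for $H_{W}=H_{l}$. Using the isomorphism $\mathcal{R}^{\ast}(X)\otimes_{\mathbb{Q}}\mathbb{Q}_{l}\simeq\mathcal{T}_{l}^{\ast}(X)$ of (R4), the hypothesis identifies the $\mathbb{Q}$-span $\mathcal{A}_{l}^{r}(X)$ of the classes of codimension-$r$ cycles in $H_{l}^{2r}(X)(r)$ with a $\mathbb{Q}$-subspace of $\mathcal{R}^{r}(X)$. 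Since the hyperplane class $\xi$ is algebraic it lies in $\mathcal{R}^{1}(X)$ by (R2), so the Lefschetz operator $L$ (cup product with $\xi$) makes sense on these spaces, and from $\mathcal{A}_{l}^{r}(X)\subseteq\mathcal{R}^{r}(X)$ one gets immediately $\mathcal{A}_{l}^{r}(X)_{\text{prim}}=\{z\in\mathcal{A}_{l}^{r}(X)\mid L^{d-2r+1}z=0\}\subseteq\mathcal{R}^{r}(X)_{\text{prim}}$. The primitive pairings on the two spaces are restrictions of one and the same form, namely $(x,y)\mapsto(-1)^{r}x\cdot y\cdot\xi^{d-2r}$ with values in the one-dimensional space $H_{l}^{2d}(X)(d)$: here the target identifications $\mathcal{A}_{l}^{d}(X)\simeq\mathbb{Q}$ and $\mathcal{R}^{d}(X)\simeq\mathbb{Q}$ both come from the degree map and agree, since in fact $\mathcal{A}_{l}^{d}(X)=\mathcal{R}^{d}(X)$, each being the $\mathbb{Q}$-line spanned by the class of a point. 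By Theorem~\ref{t31} the pairing on $\mathcal{R}^{r}(X)_{\text{prim}}$ is positive definite, hence so is its restriction to $\mathcal{A}_{l}^{r}(X)_{\text{prim}}$; letting $X$, $r$, and $l$ vary yields the Hodge standard conjecture.

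Because the argument is purely formal once Theorem~\ref{t31} is granted, there is no substantial obstacle; the work is entirely in Theorem~\ref{t31} itself. The only points requiring (routine) care are the bookkeeping through (R4) needed to regard algebraic classes as honest elements of the $\mathbb{Q}$-structure $\mathcal{R}^{\ast}(X)$ rather than merely of $\mathcal{T}_{l}^{\ast}(X)$, and the check that the two normalizations of the trace pairing into $\mathbb{Q}$ coincide. Finally, since (R4) is stated uniformly for all primes $l$, including $l=p$, the same argument applies verbatim with $H_{l}$ replaced by the crystalline theory $H_{p}$, so one obtains the Hodge standard conjecture for crystalline cohomology as well.
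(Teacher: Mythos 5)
Your proposal is correct and follows essentially the same route as the paper: by the hypothesis the algebraic primitive classes sit inside $\mathcal{R}^{r}(X)_{\text{prim}}$, so the positive-definite pairing of Theorem \ref{t31} restricts to the pairing on $\mathcal{A}^{r}(X)_{\text{prim}}$, and one then transfers positivity to $\mathcal{A}_{W}^{r}(X)_{\text{prim}}$ for an arbitrary Weil cohomology $H_{W}$ (a step the paper likewise asserts without elaboration). The extra bookkeeping you carry out through (R2) and (R4) is consistent with the paper's intended argument.
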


\begin{proof}
The bilinear form on $\mathcal{R}{}^{r}(X)_{\text{prim}}$ restricts to the
correct bilinear form on $\mathcal{A}{}^{r}(X)_{\text{prim}}$. If the first is
positive definite, then so is the second, which implies that the form on
$\mathcal{A}{}_{W}^{r}(X)_{\text{prim}}$ is positive definite for any Weil
cohomology theory $H_{W}$.
\end{proof}

\begin{aside}
\label{t33}Let $\mathcal{S}{}_{0}$ be the smallest class satisfying (*) and
let $\mathcal{S}{}$ be a second (possibly larger) class. If the Hodge
conjecture holds for CM abelian varieties, then the family $(\mathcal{A}{}%
^{r}(X))_{X\in\mathcal{S}{}_{0}}$ is a good theory of rational Tate classes
for $\mathcal{S}{}_{0}$; if moreover, the Tate conjecture holds for all
varieties in $\mathcal{S}{}$, then $(\mathcal{A}{}^{r}(X))_{X\in\mathcal{S}{}%
}$ is a good theory of rational Tate classes for $\mathcal{S}{}$. However, the
Tate conjecture alone does not imply that $(\mathcal{A}{}^{r}(X))_{X\in
\mathcal{S}{}}$ is a good theory of rational Tate classes on $\mathcal{S}{}$;
in particular, we don't know that the Tate conjecture implies the Hodge
standard conjecture. Thus, in some respects, the existence of a good theory of
rational Tate classes is a stronger statement than the Tate conjecture for
varieties over $\mathbb{F}{}$.
\end{aside}

\section{On the existence of a good theory of rational Tate classes}

I consider this only for the smallest class $\mathcal{S}{}_{0}$ satisfying
(*), which, I recall, contains the abelian varieties.

\begin{conjecture}
[Rationality Conjecture]\label{t35}Let $A$ be a CM abelian variety over
$\mathbb{Q}{}^{\mathrm{al}}$. The product of the specialization to $A_{0}$ of
any Hodge class on $A$ with any Lefschetz class on $A_{0}$ of complementary
dimension lies in $\mathbb{Q}{}$.
\end{conjecture}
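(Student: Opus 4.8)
\medskip
\noindent\textbf{A proof proposal.} The statement is immediate if the Hodge conjecture holds for $A$: the Hodge class is then algebraic, its specialization to $A_{0}$ is the class of an algebraic cycle, and its product with the algebraic Lefschetz class of complementary dimension is an intersection number. The interest is therefore in an unconditional argument, and the natural plan is to view the conjecture as the one missing ingredient in the construction of a good theory of rational Tate classes on $\mathcal{S}_{0}$. Imitating the proof of Theorem \ref{t44}, but without invoking the Hodge conjecture to obtain (R3), one takes $\mathcal{R}^{\ast}(X)$, for $X\in\mathcal{S}_{0}$, to be the $\mathbb{Q}$-subalgebra of $H_{\mathbb{A}}^{2\ast}(X)(\ast)$ generated by the classes of algebraic cycles on $X$ and by the specializations of the Hodge classes on the CM liftings of $X$, and closed under $f^{\ast}$ and $f_{\ast}$. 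By construction this family satisfies (R1) and (R2); it satisfies (R3) because $\mathcal{R}^{\ast}(A_{0})$ contains the specialization of every Hodge class on every CM lifting of $A_{0}$ by definition; and it satisfies (R4*) because the algebraic classes and (by \S 6 and the proof of Theorem \ref{t29}) the specializations of Hodge classes are Tate classes, a property preserved by products and by $f^{\ast}$ and $f_{\ast}$. By Theorem \ref{t42} the family is then a good theory of rational Tate classes on $\mathcal{S}_{0}$ \textit{provided} each $\mathcal{R}^{\ast}(X)$ is finite-dimensional over $\mathbb{Q}$ --- equivalently, provided the projection $\mathcal{R}^{\ast}(X)\to\mathcal{T}_{l}^{\ast}(X)$ is injective for one, hence for every, prime $l$. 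Granting that, the conjecture drops out: $\mathcal{R}^{d}(A_{0})\otimes_{\mathbb{Q}}\mathbb{Q}_{\ell}=\mathcal{T}_{\ell}^{d}(A_{0})=\mathbb{Q}_{\ell}\cdot(\text{class of a point})$ forces $\mathcal{R}^{d}(A_{0})=\mathbb{Q}\cdot(\text{class of a point})$, and the product in the conjecture lies in $\mathcal{R}^{d}(A_{0})$.

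Thus everything comes down to showing that a $\mathbb{Q}$-linear relation among algebraic classes and specializations of Hodge classes which holds in one realization $H_{l}^{2\ast}(X)(\ast)$ holds in all of them; by Poincar\'{e} duality and the hard Lefschetz theorem this is in turn the assertion that the cup-product pairing of any two such classes of complementary degree is a rational number, the same in every cohomology theory. Pairings of algebraic classes among themselves --- in particular pairings of an algebraic class with a Lefschetz class, Lefschetz classes being algebraic --- are intersection numbers, hence unproblematic. Pairings of two specializations of Hodge classes coming from a common CM lifting $A$ equal $\mathrm{sp}(\gamma\cup\gamma')$ with $\gamma\cup\gamma'$ a Hodge class on $A$, hence a rational number; and the case of specializations from different liftings, the diagonal class of an abelian variety being a Lefschetz class, reduces by the projection formula to the pairing of a specialization of a Hodge class with a Lefschetz class. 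So the single pairing left uncontrolled is precisely that one: the content of the conjecture.

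The hard part is therefore exactly this pairing, and the obstruction to it is structural rather than technical. A Lefschetz class on $A_{0}$ is built from divisor classes, and $\End^{0}(A_{0})$ is in general strictly larger than $\End^{0}(A)$, so $\mathrm{NS}(A_{0})$ carries divisor classes that do not arise by reduction from the characteristic-zero model $A$; such a Lefschetz class therefore need not lift to $A$, and the one device that settles every other pairing --- transport the computation to $A$, where every class in sight is manifestly defined over $\mathbb{Q}$ --- is unavailable. Nor is a purely formal escape possible: by the first paragraph the conjecture is \textit{equivalent} to the existence of a good theory of rational Tate classes on $\mathcal{S}_{0}$, and such a theory would, through Theorems \ref{t44} and \ref{t29}, already imply the Tate conjecture for abelian varieties over $\mathbb{F}$. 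Lacking a new idea for the mixed pairing, the fallback I would pursue is the Hodge conjecture for CM abelian varieties --- which by Theorem \ref{t30e} and \S 8 one need prove only in codimension $2$ --- from which, as in the first sentence, the conjecture is trivial.
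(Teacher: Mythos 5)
The statement you were asked about is not a theorem of the paper but an open conjecture: the paper states it, records in Remark \ref{t36} that it holds when $\gamma_{0}$ is algebraic (hence follows from the Hodge conjecture for CM abelian varieties) or when $\delta$ lifts, and proves in Theorem \ref{t36c} that it is equivalent to the weak rationality conjecture and to the existence of a good theory of rational Tate classes --- but it offers no proof, and neither does your proposal. Your text is an analysis of the conjecture, not a demonstration of it, and you say so yourself in the final sentences.

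The concrete gap is the circularity at the hinge of your first paragraph. Your construction of $\mathcal{R}^{\ast}(X)$ only yields a good theory ``provided each $\mathcal{R}^{\ast}(X)$ is finite-dimensional over $\mathbb{Q}$,'' i.e.\ provided the projection to one $\mathcal{T}_{l}^{\ast}(X)$ is injective; as your own second paragraph shows, that injectivity comes down to the rationality and $l$-independence of exactly one pairing, namely the product of a specialized Hodge class with a non-lifting Lefschetz class --- which is the Rationality Conjecture itself. So the argument reduces the conjecture to the conjecture, and the fallback you name (the Hodge conjecture in codimension $2$ for CM abelian varieties, via Theorem \ref{t30e}) is likewise unproven; invoking it gives only the conditional statement already recorded in Remark \ref{t36}(a). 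What your analysis does correctly reproduce is the paper's own picture: the equivalence with the existence of a good theory of rational Tate classes is Theorem \ref{t36c} (whose key implication (b)$\implies$(c) is itself deferred to another paper), your identification of the obstruction --- Lefschetz classes on $A_{0}$ built from endomorphisms of $A_{0}$ not lifting to $A$ --- matches why Remark \ref{t36}(b) covers only the canonical-lift case, and your reduction of the ``two Hodge classes from different liftings'' pairing to the mixed case is sound. But none of this closes the conjecture, so the proposal should be presented as a reduction and a diagnosis, not as a proof.
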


In more detail, a Hodge class on $A$ is an element of $\gamma$ of
$H_{\mathbb{A}{}}^{2\ast}(A)(\ast)$ and its specialization $\gamma_{0}$ is an
element of $H_{\mathbb{A}{}}^{2\ast}(A_{0})(\ast)$. Thus the product
$\gamma_{0}\cdot\delta$ of $\gamma_{0}$ with a Lefschetz class of
complementary dimension $\delta$ lies in
\[
H_{\mathbb{A}{}}^{2d}(A_{0})(d)\simeq\mathbb{A}{}_{f}^{p}\times\mathbb{Q}%
{}_{w}^{\mathrm{al}},\quad d=\dim(A).
\]
The conjecture says that it lies in $\mathbb{Q}{}\subset\mathbb{A}{}_{f}%
^{p}\times\mathbb{Q}{}_{w}^{\mathrm{al}}$. Equivalently, it says that the
$l$-component of $\gamma_{0}\cdot\delta$ is a rational number independent of
$l$.

\begin{remark}
\label{t36}(a) The conjecture is true for a particular $\gamma$ if $\gamma
_{0}$ is algebraic. Therefore, the conjecture is implied by the Hodge
conjecture for CM abelian varieties (or even by the weaker statement that the
Hodge classes specialize to algebraic classes).

(b) The conjecture is true for a particular $\delta$ if it lifts to a rational
cohomology class on $A$. In particular, the conjecture is true if $A_{0}$ is
ordinary and $A$ is its canonical lift (because then all Lefschetz classes on
$A_{0}$ lift to Lefschetz classes on $A$).
\end{remark}

For an abelian variety $A$ over $\mathbb{F}{}$, let $\mathcal{L}{}^{\ast}(A)$
be the $\mathbb{Q}{}$-subalgebra of $H_{\mathbb{A}{}}^{2\ast}(A)(\ast)$
generated by the divisor classes, and call its elements the \emph{Lefschetz
classes} on $A$.

\begin{definition}
\label{t36a}Let $A$ be an abelian variety over $\mathbb{Q}{}^{\mathrm{al}}$
with good reduction to an abelian variety $A_{0}$ over $\mathbb{F}{}$. A Hodge
class $\gamma$ on $A$ is \emph{locally }$w$\emph{-Lefschetz} if its image
$\gamma_{0}$ in $H_{\mathbb{A}{}}^{2\ast}(A_{0})(\ast)$ is in the
$\mathbb{A}{}$-span of the Lefschetz classes, and it is $w$\emph{-Lefschetz
}if $\gamma_{0}$ is itself Lefschetz.
\end{definition}

\begin{conjecture}
[Weak Rationality Conjecture]\label{t36b}Let $A$ be an abelian variety over
$\mathbb{Q}{}^{\mathrm{al}}$ with good reduction to an abelian variety $A_{0}$
over $\mathbb{F}{}$. Every locally $w$-Lefschetz Hodge class is itself $w$-Lefschetz.
\end{conjecture}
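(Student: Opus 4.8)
\medskip
\noindent\textbf{Reformulation.} By (\ref{e4}) and its crystalline analogue, the Lefschetz classes on $A_{0}$ are exactly the elements of $H_{\mathbb{A}{}}^{2\ast}(A_{0})(\ast)$ fixed, in each $l$-adic realization, by the Lefschetz group $L(A_{0})$. So ``$\gamma_{0}$ is locally $w$-Lefschetz'' says that every $l$-component of $\gamma_{0}$ is $L(A_{0})$-invariant, while ``$\gamma_{0}$ is $w$-Lefschetz'' says that $\gamma_{0}$ lies in the canonical $\mathbb{Q}{}$-structure $\mathcal{L}{}^{\ast}(A_{0})$ on this invariant space; the claim is that this $\mathbb{Q}{}$-structure ``sees'' every specialized Hodge class lying in the invariant space. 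Two facts are available unconditionally: the algebra $\mathcal{L}{}^{\ast}(A_{0})$ does carry a $\mathbb{Q}{}$-structure compatible with all $l$-adic realizations, \textit{including} $l=p$ --- this is Tate's theorem (\ref{t7}) and the equality $E^{1}$, which identify $\mathrm{NS}(A_{0})\otimes\mathbb{Q}{}$ with the Tate classes of degree $1$ in each realization, together with (crystalline and $\ell$-adic) hard Lefschetz --- and, since $\gamma$ is absolutely Hodge (\ref{t26}), its $\ell$-adic realizations for $\ell\neq p$ all come from a single rational Betti class, while its $p$-adic realization is attached to them through the crystalline comparison for good reduction. There is nothing to prove when $A$ has no exotic Hodge class: then $\gamma$ is already Lefschetz on $A$ and divisor classes on $A$ specialize to divisor classes on $A_{0}$. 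So one may assume $\gamma$ exotic and, after enlarging $K$ and applying Deligne's theorem (\ref{t26}) and Proposition \ref{t30b}, reduce to the case that $A$ is a CM abelian variety split by $K$ --- indeed the universal one $A(S,S^{+})$ of \S8, whose reduction absorbs $A_{0}$ and every abelian variety over $\mathbb{F}{}$ split by $K$; the passage from a general good-reduction $A$ to a dense CM point of its defining family is a further, non-formal, step to be handled by a deformation (Principle~B) argument.

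\medskip
\noindent\textbf{Strategy.} CM abelian varieties over $\mathbb{Q}{}^{\mathrm{al}}$ are governed by the Serre (Taniyama) group $S$, a pro-torus over $\mathbb{Q}{}$, with Hodge classes corresponding to $S$-invariants; and the reduction functor $A\rightsquigarrow A_{0}$ is governed by a morphism over $\mathbb{Q}{}$ from the quasi-motivic gerb (with $S$ as its prime-to-$p$ shadow) onto the affine group attached to the cohomology of $A_{0}$, the Lefschetz classes of $A_{0}$ corresponding to a canonical quotient that, by the first fact above, is unconditionally defined over $\mathbb{Q}{}$. In these terms $\gamma$ is a $\mathbb{Q}{}$-rational $S$-invariant vector whose image $\gamma_{0}$ lies in the $\mathcal{L}{}^{\ast}(A_{0})\otimes_{\mathbb{Q}{}}\mathbb{Q}{}_{l}$-part of every realization, and the goal is to place $\gamma_{0}$ in $\mathcal{L}{}^{\ast}(A_{0})$ itself. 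Concretely, choosing dual $\mathbb{Q}{}$-bases of $\mathcal{L}{}^{r}(A_{0})$ and $\mathcal{L}{}^{d-r}(A_{0})$ for the (perfect, $\mathbb{Q}{}$-rational, $l$-independent) intersection pairing, it suffices to show that for each Lefschetz class $f$ of complementary degree the element $\gamma_{0}\cdot f\in H_{\mathbb{A}{}}^{2d}(A_{0})(d)$, a priori an $l$-tuple of scalars, is a single rational number. Prime-to-$p$ this should follow from the theory of the Taniyama group: the $\ell$-adic realizations of CM-motives over $\mathbb{F}{}$ and their Lefschetz subquotients glue over $\mathbb{Q}{}$, so the specialization of $\gamma$ factors $\mathbb{Q}{}$-rationally through the Lefschetz quotient at the finite places, and the finite-place components of $\gamma_{0}\cdot f$ agree and are rational.

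\medskip
\noindent\textbf{The main obstacle.} The difficulty I expect to dominate is the $p$-adic (crystalline) component: one must show that the crystalline Frobenius $F$ on $H_{p}^{\ast}(A_{0})$ makes the $p$-component of $\gamma_{0}\cdot f$ equal to the common finite-place value --- equivalently, that the $p$-adic realization of the quasi-motivic gerb (the datum encoding $F$), as in the Langlands--Rapoport description of the reduction functor, is strictly compatible with the finite-place data. Here there is no unconditional substitute for Tate's theorem tying the $p$-adic realizations of motives over $\mathbb{F}{}$ to their $\mathbb{Q}{}$-structures in the required generality, and establishing this compatibility appears to be essentially as hard as constructing the $p$-adic part of a good theory of rational Tate classes. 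It is precisely this step that is short-circuited in the cases already known: if $\gamma_{0}$ is algebraic there is nothing to do, so the Hodge conjecture for CM abelian varieties implies the statement (cf. Remark~\ref{t36}(a)); and if $A_{0}$ is ordinary with $A$ its canonical lift, all Lefschetz classes of $A_{0}$ lift to Lefschetz classes on $A$ and the $p$-adic realization is forced (Remark~\ref{t36}(b)). A general proof would presumably replace the canonical-lift argument by a non-ordinary analogue, or construct the crystalline realizations of CM-motives over $\mathbb{F}{}$ in a way that makes the required rationality transparent.
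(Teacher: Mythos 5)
There is nothing in the paper to compare your proposal against: the statement you were asked to prove is stated in the paper as a \emph{conjecture} (the Weak Rationality Conjecture, \ref{t36b}), and the paper offers no proof of it. On the contrary, Theorem \ref{t36c} shows it is equivalent to the rationality conjecture (\ref{t35}) and to the existence of a good theory of rational Tate classes on abelian varieties over $\mathbb{F}$, which the whole point of \S\S 7--10 is to present as open (cf.\ Aside \ref{t40}). So no blind attempt could legitimately close the argument, and indeed yours does not claim to: you explicitly leave the decisive step unproved.

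That said, two remarks on the content of your sketch. First, your dual-basis reformulation --- that it suffices to show $\gamma_{0}\cdot f\in\mathbb{Q}$ for Lefschetz classes $f$ of complementary degree --- is exactly the mechanism the paper uses to prove (a)$\implies$(b) in Theorem \ref{t36c}, so that part of your reduction is sound and matches the paper's point of view. Second, your claim that the prime-to-$p$ rationality of $\gamma_{0}\cdot f$ ``should follow from the theory of the Taniyama group'' is not a known fact and is not asserted anywhere in the paper: $f$ is a Lefschetz class on $A_{0}$, which in general does not lift to a rational class on $A$, so pairing it with the adelic specialization of an absolutely Hodge class gives a priori only a tuple of $l$-adic scalars, with no rationality or independence of $l$ --- at the finite places away from $p$ just as much as at $p$. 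In other words, the gap in your argument is not confined to the crystalline component; the conjecture is genuinely open at every place, and what you have written is an (honest, and in places accurate) account of why, rather than a proof.
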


\begin{theorem}
\label{t36c}The following statements are equivalent:

\begin{enumerate}
\item The rationality conjecture holds for all CM abelian varieties over
$\mathbb{\mathbb{Q}{}}^{\mathrm{al}}{}$.

\item The weak rationality conjecture holds for all CM abelian varieties over
$\mathbb{Q}{}^{\mathrm{al}}$.

\item There exists a good theory of rational Tate classes on abelian varieties
over $\mathbb{F}{}$.
\end{enumerate}
\end{theorem}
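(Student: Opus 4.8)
The plan is to establish the three equivalences by proving the cycle (c)$\Rightarrow$(b)$\Rightarrow$(a)$\Rightarrow$(c), throughout writing $d=\dim A$ and using the structure theory of the Lefschetz group $L(A_0)$ from \S 2 — in particular that the $L(A_0)$-invariants in $H_{\mathbb{A}}^{2\ast}(A_0^n)(\ast)$ are exactly the $\mathbb{A}$-span of the Lefschetz classes, and that the cup-product pairing $\mathcal{L}^{r}(A_0)\times\mathcal{L}^{d-r}(A_0)\to\mathcal{L}^{d}(A_0)=\mathbb{Q}$ is perfect (this rests only on hard Lefschetz and the Hodge standard conjecture for the subalgebra generated by divisor classes, both classical). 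The first implication (c)$\Rightarrow$(b) is short: given a good theory $(\mathcal{R}^{\ast}(X))$, a CM abelian variety $A$, and a locally $w$-Lefschetz Hodge class $\gamma$ on $A$, condition (R3) puts $\gamma_0\in\mathcal{R}^{\ast}(A_0)$, condition (R2) (with $\mathcal{R}^{\ast}(A_0)$ a subalgebra) gives $\mathcal{L}^{\ast}(A_0)\subset\mathcal{R}^{\ast}(A_0)$, and (R4) makes $\mathcal{R}^{\ast}(A_0)$ a $\mathbb{Q}$-structure on each $\mathcal{T}_l^{\ast}(A_0)$ in which $\mathcal{L}^{\ast}(A_0)$ is the $\mathbb{Q}$-span of the Lefschetz classes; since the image of $\gamma_0$ in $\mathcal{T}_l^{\ast}(A_0)$ lies in $\mathcal{L}^{\ast}(A_0)\otimes\mathbb{Q}_l$, the elementary identity $W=V\cap(W\otimes\mathbb{Q}_l)$ forces $\gamma_0\in\mathcal{L}^{\ast}(A_0)$, i.e. $\gamma$ is $w$-Lefschetz. (The same circle of ideas gives the direct implication (c)$\Rightarrow$(a): by (R4) the line $\mathcal{R}^{d}(A_0)\subset H_{\mathbb{A}}^{2d}(A_0)(d)$ is $1$-dimensional over $\mathbb{Q}$ and by (R2) contains the point class $(1,1,\dots)$, hence equals $\mathbb{Q}$, and $\gamma_0\cdot\delta$ lies in it.)

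Next I would prove (b)$\Rightarrow$(a). Let $A$ be a CM abelian variety, $\gamma$ a Hodge class of degree $r$ on $A$, and $\delta$ a Lefschetz class of degree $d-r$ on $A_0$. The Poincar\'e pairing restricts to a perfect pairing $\mathcal{T}_l^{r}(A_0)\times\mathcal{T}_l^{d-r}(A_0)\to\mathbb{Q}_l$ (the eigenvalue-$1$ generalized eigenspaces pair perfectly, since $\langle x,y\rangle\neq0$ forces the product of the corresponding Frobenius eigenvalues to be $1$), and under it the $\mathbb{A}$-span of the Lefschetz classes is the annihilator of its complement; hence $\gamma_0\cdot\delta=\gamma_0^{\circ}\cdot\delta$, where $\gamma_0^{\circ}\in\mathcal{L}^{r}(A_0)\otimes\mathbb{A}$ is the $L(A_0)$-invariant component of the Tate class $\gamma_0$, and by perfectness of the Lefschetz pairing over $\mathbb{Q}$ it suffices to show $\gamma_0^{\circ}\in\mathcal{L}^{r}(A_0)$. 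The crux is to realise $\gamma_0^{\circ}$ as the specialization of a Hodge class on $A$ (equivalently, on a sufficiently large CM abelian variety dominating $A$): decompose $\gamma$ into $L(A)$-isotypic summands, specialize the invariant summand directly, and handle the remaining summands — whose reductions acquire an $L(A_0)$-invariant part only because characters of $L(A)$ can become trivial on the smaller group $L(A_0)$ — by enlarging $A$ so that the relevant Lefschetz classes on $A_0$ are themselves reductions of algebraic classes. The resulting class $\gamma'$ is locally $w$-Lefschetz, so (b) gives $\gamma_0^{\circ}=\gamma'_0\in\mathcal{L}^{r}(A_0)$.

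Finally I would prove (a)$\Rightarrow$(c). For $X\in\mathcal{S}_0$ define $\mathcal{R}^{\ast}(X)$ to be the graded $\mathbb{Q}$-subalgebra of $H_{\mathbb{A}}^{2\ast}(X)(\ast)$ generated by the divisor classes on $X$ together with all classes obtained from Hodge classes on CM abelian varieties by specialization and by correspondences available within $\mathcal{S}_0$. Then (R2) and (R3) hold by construction, and (R4*) holds because divisor classes and reductions of CM Hodge classes are Tate classes (the argument of the proof of Theorem \ref{t29}, via $P(A_0)\subset\MT(A)$ inside $L(A)$), correspondences over a finite field preserve the Tate property, and Tate classes form a subalgebra. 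The decisive point is (R1): stability of $\mathcal{R}^{\ast}$ under $f^{\ast}$ and $f_{\ast}$ for all morphisms of varieties in $\mathcal{S}_0$, including morphisms of abelian varieties over $\mathbb{F}$ that do not lift to characteristic zero. Here I expect the rationality conjecture to be exactly what is needed: it makes the cup-product pairing $\mathcal{R}^{r}(X)\times\mathcal{R}^{d-r}(X)\to H_{\mathbb{A}}^{2d}(X)(d)$ take values in $\mathbb{Q}\cdot[\mathrm{pt}]$ — on two reductions of Hodge classes because a top-degree Hodge class on a CM abelian variety spans $\mathcal{B}^{d}(A)=\mathbb{Q}\cdot[\mathrm{pt}]$, on a Lefschetz class against a reduction of a Hodge class by the conjecture itself, and on two Lefschetz classes classically — and a non-degenerate $\mathbb{Q}$-valued pairing forces $\mathcal{R}^{\ast}(X)$ to inject into each $\mathcal{T}_l^{\ast}(X)$, so that $\mathcal{R}^{\ast}(X)$ is pinned down intrinsically (as the Tate classes pairing rationally with the whole family) and is manifestly preserved by all correspondences. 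With (R1)--(R3) and (R4*) established, Theorem \ref{t42} upgrades (R4*) to (R4), so $(\mathcal{R}^{\ast}(X))_{X\in\mathcal{S}_0}$ is a good theory of rational Tate classes and (c) holds.

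The main obstacle is the enlargement argument in (b)$\Rightarrow$(a) — exhibiting the $L(A_0)$-invariant part of a reduced Hodge class as itself the reduction of a Hodge class — together with the closely related verification of functoriality (R1) in (a)$\Rightarrow$(c); both amount to controlling how Lefschetz classes on $A_0$ fail to lift to characteristic zero, and it is precisely to tame this failure that the rationality conjecture (equivalently its weak form) is the right hypothesis. The remaining steps are formal or follow the pattern of the proofs of Theorems \ref{t42} and \ref{t44}.
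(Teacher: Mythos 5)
Your easy implications are fine: the deduction (c)$\Rightarrow$(a) is the paper's own argument ($\gamma_{0}\cdot\delta\in\mathcal{R}^{d}(A_{0})\simeq\mathbb{Q}$), and your direct (c)$\Rightarrow$(b) via the elementary fact $W=V\cap(W\otimes\mathbb{Q}_{l})$ inside the $\mathbb{Q}$-structure $\mathcal{R}^{\ast}(A_{0})$ is correct. But the two steps that carry all the weight in your cycle, (b)$\Rightarrow$(a) and (a)$\Rightarrow$(c), each contain a genuine gap. For (b)$\Rightarrow$(a), your reduction $\gamma_{0}\cdot\delta=\gamma_{0}^{\circ}\cdot\delta$ is fine, but everything then rests on the assertion that the $L(A_{0})$-invariant component $\gamma_{0}^{\circ}$ of the specialized Hodge class is itself the specialization of a Hodge class on some enlargement of $A$; no construction is given, and this is exactly the kind of lifting statement (Lefschetz classes on $A_{0}$ need not lift, cf.\ Remark \ref{t36}(b) and Question \ref{t38}) that the whole theory is struggling with. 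The paper does not attempt this: it proves the \emph{opposite} implication (a)$\Rightarrow$(b) cheaply, by choosing a $\mathbb{Q}$-basis $e_{1},\ldots,e_{t}$ of $\mathcal{L}^{r}(A_{0})$ and the dual basis $f_{1},\ldots,f_{t}$ of $\mathcal{L}^{d-r}(A_{0})$ (perfectness of the Lefschetz pairing, \cite{milne1999lc}, 5.2, 5.3), writing $\gamma_{0}=\sum c_{i}e_{i}$ with $c_{i}\in\mathbb{A}$ and noting $\langle\gamma_{0}\cdot f_{j}\rangle=c_{j}\in\mathbb{Q}$ by (a); it then obtains (b)$\Rightarrow$(a) only by passing through (c). You have omitted this step entirely, so your cycle never uses the one nontrivial implication that can be proved in a few lines.

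For (a)$\Rightarrow$(c), your verification of (R1) is not a proof. The claim that the rationality conjecture ``pins down'' $\mathcal{R}^{\ast}(X)$ intrinsically as the Tate classes pairing rationally against the family presupposes that the $\mathbb{Q}$-valued pairing on your candidate $\mathcal{R}^{\ast}(X)$ is nondegenerate; that is essentially the Hodge standard conjecture for rational Tate classes (Theorem \ref{t31}), which is known only \emph{for} good theories, so the argument is circular. Even granting an intrinsic description, stability under $f_{\ast}$ for morphisms of abelian varieties over $\mathbb{F}$ that do not lift to characteristic zero is precisely the hard point; the paper makes no claim to prove (b)$\Rightarrow$(c) in the text and defers it to \cite{milne2007rtc}, whose main theorem it is. So the honest structure is: (a)$\Rightarrow$(b) by the dual-basis argument, (b)$\Rightarrow$(c) quoted from \cite{milne2007rtc}, (c)$\Rightarrow$(a) trivially; your proposal replaces the provable step by two sketches whose crucial points (the lifting of $\gamma_{0}^{\circ}$, and (R1) for your $\mathcal{R}$) are exactly the open difficulties, as you partly acknowledge.
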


\begin{proof}
(a)$\implies$(b): Choose a $\mathbb{Q}{}$-basis $e_{1},\ldots,e_{t}$ for the
space of Lefschetz classes of codimension $r$ on $A_{0}$, and let
$f_{1},\ldots,f_{t}$ be the dual basis for the space of Lefschetz classes of
complementary dimension (here we use \cite{milne1999lc}, 5.2, 5.3). If
$\gamma$ is a locally $w$-Lefschetz class of codimension $r$, then $\gamma
_{0}=\sum c_{i}e_{i}$ for some $c_{i}\in\mathbb{A}{}$. Now%
\[
\langle\gamma_{0}\cdot f_{j}\rangle=c_{j},
\]
which (a) implies lies in $\mathbb{Q}{}$.

(b)$\implies$(c): See \cite{milne2007rtc}.

(c)$\implies$(a): If there exists a good theory $\mathcal{R}{}$ of rational
Tate classes, then certainly the rationality conjecture is true, because then
$\gamma_{0}\cdot\delta\in\mathcal{R}{}^{2d}\simeq\mathbb{Q}{}$.
\end{proof}

\subsection{Two questions}

\begin{question}
\label{t38}Let $A$ be a CM abelian variety over $\mathbb{Q}{}^{\mathrm{al}}$,
let $\gamma$ be a Hodge class on $A$, and let $\delta$ be a divisor class on
$A_{0}$. Does $(A_{0},\gamma_{0},\delta)$ always lift to characteristic zero?
That is, does there always exist a CM abelian variety $A^{\prime}$ over
$\mathbb{Q}{}^{\mathrm{al}}$, a Hodge class $\gamma^{\prime}$ on $A^{\prime}$,
a divisor class $\delta^{\prime}$ on $A^{\prime}$ and an isogeny
$A_{0}^{\prime}\rightarrow A_{0}$ sending $\gamma_{0}^{\prime}$ to $\gamma
_{0}$ and $\delta_{0}^{\prime}$ to $\delta$?
\end{question}

\begin{proposition}
\label{t38p}If Question \ref{t38} has a positive answer, then the rationality
conjecture holds for all CM abelian varieties.
\end{proposition}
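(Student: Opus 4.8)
The plan is to deduce the rationality conjecture by an induction that uses a positive answer to Question~\ref{t38} to absorb the divisor factors of a Lefschetz class, one at a time, into the Hodge class. For $s\geq0$, let $R(s)$ be the statement: for every CM abelian variety $B$ over $\mathbb{Q}{}^{\mathrm{al}}$ of dimension $d_{B}$, every Hodge class $\eta$ on $B$ of codimension $d_{B}-s$, and all divisor classes $\mu_{1},\dots,\mu_{s}$ on $B_{0}$, the product $\eta_{0}\cdot\mu_{1}\cdots\mu_{s}$ lies in $\mathbb{Q}{}$. Since $\mathcal{L}{}^{\ast}(A_{0})$ is generated as a $\mathbb{Q}{}$-algebra by divisor classes and the pairing $\eta_{0}\cdot(-)$ is $\mathbb{Q}{}$-bilinear, the rationality conjecture for a CM abelian variety $A$ of dimension $d$, applied to a homogeneous Hodge class of codimension $r$, follows from $R(d-r)$; so it suffices to prove $R(s)$ for all $s$, which I would do by induction on $s$.

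The base case $R(0)$ is immediate: $H^{2d_{B}}(B_{\mathbb{C}{}},\mathbb{Q}{})(d_{B})\simeq\mathbb{Q}{}$ consists entirely of Hodge classes, so a top-degree Hodge class on $B$ already lies in the copy of $\mathbb{Q}{}$ inside $H_{\mathbb{A}{}}^{2d_{B}}(B)(d_{B})$, and specialization carries the fundamental class to the fundamental class, hence $\mathbb{Q}{}$ to $\mathbb{Q}{}$. For the inductive step, assume $R(s-1)$ with $s\geq1$, and let $B$, a Hodge class $\eta$ on $B$ of codimension $d_{B}-s$, and divisor classes $\mu_{1},\dots,\mu_{s}$ on $B_{0}$ be given. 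Applying the hypothesis of Question~\ref{t38} to the triple $(B_{0},\eta_{0},\mu_{1})$ produces a CM abelian variety $B^{\prime}$ over $\mathbb{Q}{}^{\mathrm{al}}$ --- necessarily of dimension $d_{B}$, since $B_{0}^{\prime}$ is isogenous to $B_{0}$ --- a Hodge class $\eta^{\prime}$ on $B^{\prime}$, a divisor class $\mu_{1}^{\prime}$ on $B^{\prime}$, and an isogeny $\psi\colon B_{0}^{\prime}\rightarrow B_{0}$ under whose induced ring isomorphism $\psi^{\ast}$ on $\mathbb{Q}{}$-cohomology the class $\eta_{0}$ corresponds to a nonzero rational multiple of $\eta^{\prime}_{0}$ and $\mu_{1}$ to a nonzero rational multiple of $(\mu_{1}^{\prime})_{0}$. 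Now $\eta^{\prime\prime}:=\eta^{\prime}\cdot\mu_{1}^{\prime}$ is a Hodge class on $B^{\prime}$ of codimension $d_{B}-(s-1)$, because divisor classes are Hodge and the Hodge classes form a graded $\mathbb{Q}{}$-subalgebra, and $\eta^{\prime\prime}_{0}=\eta^{\prime}_{0}\cdot(\mu_{1}^{\prime})_{0}$ because specialization is a homomorphism of graded rings. Pulling the remaining factors back along $\psi$ gives divisor classes $\psi^{\ast}\mu_{2},\dots,\psi^{\ast}\mu_{s}$ on $B_{0}^{\prime}$, and since $\psi^{\ast}$ is a ring homomorphism,
\[
\psi^{\ast}\bigl(\eta_{0}\cdot\mu_{1}\cdots\mu_{s}\bigr)=c\cdot\eta^{\prime\prime}_{0}\cdot(\psi^{\ast}\mu_{2}\cdots\psi^{\ast}\mu_{s})\qquad\text{for some }c\in\mathbb{Q}{}^{\times}.
\]
The right-hand side lies in $\mathbb{Q}{}$ by $R(s-1)$ applied to $B^{\prime}$, $\eta^{\prime\prime}$, and the $s-1$ divisor classes $\psi^{\ast}\mu_{i}$; and $\psi^{\ast}$ acts on $H_{\mathbb{A}{}}^{2d_{B}}(B_{0})(d_{B})\simeq\mathbb{A}{}_{f}^{p}\times\mathbb{Q}{}_{w}^{\mathrm{al}}$ as multiplication by $\deg(\psi)\in\mathbb{Q}{}^{\times}$, so $\eta_{0}\cdot\mu_{1}\cdots\mu_{s}\in\mathbb{Q}{}$, proving $R(s)$.

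The step where the hypothesis does real work --- and the only one needing care --- is the bookkeeping around $\psi$: one must check that the identifications furnished by Question~\ref{t38} are compatible with cup products and that $\psi^{\ast}$ preserves the canonical $\mathbb{Q}{}$-structure on the top cohomology $H_{\mathbb{A}{}}^{2d_{B}}(-)(d_{B})$ up to a factor in $\mathbb{Q}{}^{\times}$ --- which is true whether one reads ``$\psi$ sends $\eta_{0}^{\prime}$ to $\eta_{0}$'' as $\psi^{\ast}\eta_{0}=\eta^{\prime}_{0}$ or as $\psi_{\ast}\eta^{\prime}_{0}=\eta_{0}$, since $\psi^{\ast}$ and $\psi_{\ast}$ differ by multiplication by $\deg(\psi)$ --- so that rationality of a top-degree pairing on $B_{0}^{\prime}$ transfers back to $B_{0}$. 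Everything else (the reduction to monomials in divisor classes via the structure of $\mathcal{L}{}^{\ast}$, the subalgebra property of Hodge classes, the compatibility of specialization with cup products, and the base case) is routine, so I expect this isogeny bookkeeping to be the only, and rather mild, obstacle.
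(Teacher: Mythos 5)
Your proof is correct and takes essentially the same route as the paper's: induct on the dimension of the Hodge class, using a positive answer to Question \ref{t38} to absorb one divisor factor of the (monomial) Lefschetz class into the Hodge class on the lifted variety, and transport the remaining factors across the isogeny, whose effect on the top-degree pairing is only a factor $\deg(\psi)\in\mathbb{Q}^{\times}$. The only (harmless) difference is that you run the induction all the way down to dimension $0$, whereas the paper stops at dimension $\leq 1$, where the Hodge class is already algebraic.
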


\begin{proof}
Let $\gamma$ be a Hodge class on a CM abelian variety $A$ of dimension $d$
over $\mathbb{Q}{}^{\mathrm{al}}$. If $\gamma$ has dimension $\leq1$, then it
is algebraic and so satisfies the rationality conjecture. We shall proceed by
induction on the codimension of $\gamma$. Assume $\gamma$ has dimension
$r\geq2$, and let $\delta$ be a Lefschetz class of dimension $d-r$. We may
suppose that $\delta=\delta_{1}\cdot\delta_{2}\cdots$ where $\delta_{1}%
,\delta_{2},\ldots$ are divisor classes. Apply (\ref{t38}) to $(A,\gamma
,\delta)$. Then $\gamma^{\prime}\cdot\delta_{1}^{\prime}$ is a Hodge class on
$A^{\prime}$ of codimension $r-1$, and%
\[
\gamma_{0}\cdot\delta\in(\gamma^{\prime}\cdot\delta_{1}^{\prime})_{0}%
\cdot\delta_{2}\cdot\cdots\cdot\delta_{d-r}\mathbb{Q}{}\subset\mathbb{Q}%
{}\text{.}%
\]

\end{proof}

A pair $(A,\nu)$ consisting of an abelian variety $A$ over $\mathbb{C}{}$ and
a homomorphism $\nu$ from a CM field $E$ to $\End^{0}(A)$ is said to be of
\emph{Weil type} if the tangent space to $A$ at $0$ is a free $E\otimes
_{\mathbb{Q}{}}k{}$-module. For such a pair $(A,\nu)$, the space
\[
W^{d}(A,\nu)\overset{\text{{\tiny def}}}{=}\bigwedge\nolimits_{E}^{d}%
H^{1}(A,\mathbb{Q}{})\subset H^{d}(A,\mathbb{Q}{}),\text{ where }d=\dim
_{E}H^{1}(A,\mathbb{Q}{}),
\]
consists of Hodge classes (\cite{deligne1982}, 4.4). When $E$ is quadratic
over $\mathbb{Q}{}$, the spaces $W^{d}$ were studied by Weil (1977), and for
this reason its elements are called \emph{Weil classes}. A \emph{polarization}
of an abelian variety $(A,\nu)$ of Weil type is a polarization of $A$ whose
Rosati involution stabilizes $E$ and induces complex conjugation on it. There
then exists an $E$-hermitian form $\phi$ on $H_{1}(A,\mathbb{Q}{})$ and an
$f\in E^{\times}$ with $\bar{f}=-f$ such that $\psi(x,y)\overset
{\text{{\tiny def}}}{=}\Tr_{E/\mathbb{Q}{}}(f\phi(x,y))$ is a Riemann form for
$\lambda$ (ibid. 4.6). We say that the Weil classes on $(A,\nu)$ are
\emph{split} if there exists a polarization of $(A,\nu)$ for which the
$E$-hermitian form $\phi$ is split (i.e., admits a totally isotropic subspace
of dimension $\dim_{E}H_{1}(A,\mathbb{Q}{})/2$).

\begin{question}
\label{t39}Is it possible to prove the weak rationality conjecture for split
Weil classes on CM abelian variety by considering the families of abelian
varieties considered in \cite{deligne1982}, proof of 4.8, and
\cite{andre2006b}, \S 3?
\end{question}

A positive answer to this question implies the weak rationality conjecture
because of the following result of Andre (1992)\nocite{andre1992} (or the
results of Deligne 1982, \S 5).

\begin{quote}
\label{t45}Let $A$ be a CM abelian variety over $\mathbb{C}{}$. Then there
exist CM abelian varieties $B_{i}$ and homomorphisms $A\rightarrow B_{i}$ such
that every Hodge class on $A$ is a linear combination of the inverse images of
split Weil classes on the $B_{i}$.
\end{quote}

\noindent In the spirit of \cite{weil1967}, I leave the questions as exercises
for the interested reader.

\begin{aside}
\label{t40}In the paper in which they state their conjecture concerning the
structure of the points on a Shimura variety over a finite field, Langlands
and Rapoport prove the conjecture for some simple Shimura varieties of
PEL-type under the assumption of the Hodge conjecture for CM-varieties, the
Tate conjecture for abelian varieties over finite fields, and the Hodge
standard conjecture for abelian varieties over finite fields. I've proved that
the first of these conjectures implies the other two (see \ref{t44} and
\ref{t32}), and so we have gone from needing three conjectures to needing only
one. A proof of the rationality conjecture would eliminate the need for the
remaining conjecture. Probably we can get by with much less, but having come
so far I would like to finish it off with no fudges.
\end{aside}

\begin{aside}
Readers of the Wall Street Journal on August 1, 2007, were excited to find a
headline on the front page of Section B directing them to a column on
\textquotedblleft The Secret Life of Mathematicians\textquotedblright. The
column was about the workshop, and included the following paragraph:\bquote
Progress, though, was made. V. Kumar Murty, of the University of Toronto, said
that as a result of the sessions, he'd be pursuing a new line of attack on
Tate. It makes use of ideas of the J.S. Milne of Michigan, who was also in
attendance, and involves Abelian varieties over finite fields, in case you
want to get started yourself.\equote This becomes more-or-less correct when
you replace \textquotedblleft Tate\textquotedblright\ with the
\textquotedblleft weak rationality conjecture\textquotedblright.\footnote{The
column on the WSJ website is available only to subscribers, but there is a
summary of it on the MAA site at
\url{http://mathgateway.maa.org/do/ViewMathNews?id=143}..}
\end{aside}

\bsmall



\bibliographystyle{cbe}
\bibliography{D:/MData/Bib/refs}

\bigskip
\noindent Mathematics Department, University of Michigan, Ann Arbor, MI 48104, USA

\noindent\url{www.jmilne.org/math/}

\esmall
\end{document}